\newtheorem{thm}{Theorem}[subsection]
\newtheorem{lem}[thm]{Lemma}
\newtheorem{cor}[thm]{Corollary}
\newtheorem{prop}[thm]{Proposition}
\theoremstyle{definition}
\newtheorem{defn}[thm]{Definition}
\newtheorem{Rem}[thm]{Remark}
\theoremstyle{remark}
\numberwithin{equation}{subsection}
\def\XXint#1#2#3{{\setbox0=\hbox{$#1{#2#3}{\int}$}
\vcenter{\hbox{$#2#3$}}\kern-.5\wd0}}
\newcommand{\R}{\mathbb{R}}
\newcommand{\Cpl}{\mathsf{Cpl}}
\newcommand{\supp}{\mathrm{supp}\,}
\newcommand{\bary}{\mathsf{b}}
\newcommand{\Opt}{\mathsf{Opt}}
\newcommand{\Var}{\mathsf{Var}}
\newcommand{\calC}{\mathcal{C}}
\newcommand{\calD}{\mathcal{D}}
\newcommand{\calH}{\mathcal{H}}
\newcommand{\calK}{\mathcal{K}}
\newcommand{\calL}{\mathcal{L}}
\newcommand{\calP}{\mathcal{P}}
\newcommand{\vol}{\rm vol}
\newcommand{\bbC}{\mathbb{C}}
\newcommand{\bbN}{\mathbb{N}}
\newcommand{\bbR}{\mathbb{R}}
\newcommand{\bbZ}{\mathbb{Z}}
\newcommand{\frakh}{\mathfrak{h}}
\newcommand{\scrP}{\mathscr{P}}
\DeclareMathOperator{\Diam}{Diam}
\def\@makefnmark{%
\leavevmode
\raise.9ex\hbox{\check@mathfonts
\fontsize\sf@size\z@\normalfont%
\@thefnmark}%
}
\title{Distance functions on convex bodies and symplectic toric manifolds}
\author{H.~Fujita, Y.~Kitabeppu, A.~Mitsuishi}
\address[Hajime Fujita]{Japan Women's University}
\email{fujitah@fc.jwu.ac.jp}
\address[Yu Kitabeppu]{Kumamoto University}
\email{ybeppu@kumamoto-u.ac.jp}
\address[Ayato Mitsuishi]{Fukuoka University}
\email{mitsuishi@fukuoka-u.ac.jp}
\subjclass[2010]{Primary 53C23, Secondary 53D20, 52B12} 
\begin{document}

\maketitle
 \begin{abstract}
 In this paper we discuss three distance functions on the set of convex bodies. 
 In particular we study the convergence of Delzant polytopes, which are fundamental objects in 
 symplectic toric geometry. 
 By using these observations, we derive some convergence theorems for symplectic toric manifolds 
 with respect to the Gromov-Hausdorff distance. 
 \end{abstract}

\tableofcontents
 %
 %
\section{Introduction}
Convex polytopes, or more generally convex bodies, are classical and important objects in geometry. 
There are many results in which structures or properties of convex polytopes are shown to have deep connections
with other objects, through algebraic or combinatorial procedures. 
Among other such results, there is the {\it Delzant construction}~\cite{Delzant}, which is well known in symplectic geometry. 
Using the Delzant construction one obtains a natural bijective correspondence 
between the set of {\it Delzant polytopes} and the set of {\it symplectic toric manifolds}. 
 Under this correspondence, the geometric data of symplectic toric manifolds 
are encoded as combinatorial or topological properties of their corresponding polytopes. 
For example, the cohomology ring of symplectic toric manifolds can be recovered completely as the  
{\it Stanley-Reisner ring} of the associated polytope. 
See e.g. \cite{BuchstaberPanov} for more details on this dictionary between Delzant polytopes and symplectic toric manifolds.

The purpose of our project is to further develop aspects of this kind of correspondence from the viewpoint of 
Riemannian or metric geometry. The present paper contains two parts.  
Firstly, we establish relationships between three natural distance functions on the set of convex bodies. 
The first function $d^W$ is defined by the {\it Wasserstein distance} of probability measures associated with convex bodies. 
The Wasserstein distance is a quite important tool in recent developments of geometric analysis for metric measure spaces. 
The second distance $d^V$ is defined by the Lebesgue volume of the symmetric difference of convex bodies.
This distance function is natural from the viewpoint of symplectic geometry and is studied in \cite{PPRS} and \cite{FujitaOhashi}. 
The third function $d^H$ is the Hausdorff distance, which is a classical and basic tool in geometry of convex bodies. 
The main result of the first part of this paper is as follows. 

\medskip

\noindent
{\bf Theorem 1 (Theorem~\ref{thm:equivdVdWdH}). } 
{\it The metric topologies determined by the distance functions $d^W$, $d^V$ and $d^H$ coincide with each other. }

\medskip

Secondly, we investigate the relationship between the 
metric geometry of Delzant polytopes and the Riemannian geometry of symplectic toric manifolds through the Delzant construction.  
Here we equip each symplectic toric manifold with a K\"ahler metric called the {\it Guillemin metric}~\cite{Guillemin}, 
and we regard a symplectic toric manifold as a Riemannian manifold. 
The main results in the second part of this paper are the following. 

\medskip

\noindent
{\bf Theorem~2 (Theorem~\ref{thm:convGH}). } 
{\it For a sequence of Delzant polytopes $\{P_i\}_i$ in $\R^n$, suppose that 
$\{P_i\}_i$ converges to a Delzant polytope $P$ in $\R^n$ in the $d^H$-topology (hence also in the $d^W$-topology and $d^V$-topology),  
and the limit of the numbers of facets of $\{P_i\}_i$ coincides with that of $P$. Then 
the sequence of symplectic toric manifolds $\{M_{P_i}\}_i$ with the Guillemin metric converges to $M_P$  in 
the torus-equivariant Gromov-Hausdorff topology. }

\medskip

As a corollary (Corollary~\ref{cor:stability}), we also have a torus-equivariant stability theorem in the setting of converging symplectic toric manifolds. 

\medskip

\noindent
{\bf Theorem~3 (Theorem~\ref{thm:GHHvertex}, Theorem~\ref{thm:GHH}). } 
{\it For a sequence of Delzant polytopes $\{P_i\}_i$ in $\R^n$ and a Delzant polytope $P$ in $\R^n$, 
suppose that the corresponding sequence of symplectic toric manifolds $\{M_{P_i}\}_i$ converges to $M_P$ 
in the torus-equivariant measured Gromov-Hausdorff topology.
Then we have : 
\begin{itemize}
\item the fixed point set of $M_{P_i}$ converges to that of $M_P$. In particular we have the lower semi-continuity of the Euler characteristic, and  
\item we have a sequence which converges to $P$ in $d^H$-topology by using $\{P_i\}_i$ and the approximation maps for $\{M_{P_i}\}_i$. 
\end{itemize} 
}

\medskip

We emphasize that there are no hypotheses on the curvature in the statement of the above theorem. 
By incorporating \lq\lq potential functions\rq\rq  as in \cite{Aberu} we may treat more general 
torus-invariant Riemannian metrics of symplectic toric manifolds which are not necessarily Guillemin metrics.

In the present paper, we only consider the non-collapsing case. 
It is surely interesting to attack the same problems under collapsing limit, and 
we will discuss this in a subsequent paper. 
In addition, our general setting of convex bodies in the first part of this paper is motivated by the fact that non-Delzant polytopes are increasingly important in the context of toric degenerations 
of integrable systems or projective varieties as in \cite{HaradaKaveh}, \cite{NishinouNoharaUeda} and so on. 

This paper is organized as follows. 
In Section~\ref{Three distance functions} we introduce three distance functions on the set of convex bodies. 
In Section~\ref{Relation of distance fuctions} we show that the three corresponding metric topologies coincide. 
Note that the equivalence between the distance function defined by the volume and the Hausdorff 
distance is classically known, by \cite{ShephardWebster} for example. 
In \cite{PPRS} Pelayo-Pires-Ratiu-Sabatini studied several properties of the moduli space of Delzant polytopes with respect to the natural action of integral affine transformations. 
This moduli space arises naturally from an equivalence relation of symplectic toric manifolds 
known as weak equivalence, and we expect it to be an important object in a subsequent research.
We also give comments on the distance function and the associated topology on this moduli space which were studied in \cite{FujitaOhashi}.
In Section~\ref{Delzant polytopes} we discuss the definition of Delzant polytopes and 
the description of Guillemin metric on the corresponding symplectic toric manifolds. 
In Section~\ref{Convergence of polytopes} we discuss the relation between the convergence of Delzant polytopes and 
the convergence of symplectic toric manifolds. 
In Appendix~\ref{Preliminaries} we record several facts on probability measures and Wasserstein distance. 
In Appendix~\ref{Disintegration} we provide a disintegration theorem which 
is important in the proof of Theorem~\ref{thm:GHH}. 

\medskip

\noindent
{\bf Acknowledgement.} 
This work was partially done while the first author was visiting the Department of Mathematics, University of Toronto, and 
the Department of Mathematics and Statistics, McMaster University. 
He would like to thank both institutions for their hospitality, especially for M.~Harada. 
He is also grateful to Y.~Karshon and X.~Tang for fruitful discussions. 
The first author is partly supported by Grant-in-Aid for Scientific Research (C) 18K03288. 
The second author is partly supported by Grant-in-Aid for Early-Career Scientists 18K13412. 
The third author is partly supported by Grant-in-Aid for Young Scientists (B) 15K17529 and Scientific Research (A) 17H01091. 
Finally, the authors would like to express gratitude to K.~Ohashi who gave us a chance to begin this research. 

\medskip

\noindent
{\bf Notations.} 
For a metric space $(X,d)$, a subset $Y$ of $X$, a point $x$ in $X$ and a positive real number $r$ we use the following notations. 
\begin{itemize}
\item $B(x, r):= \{y\in X \ | \ d(x,y)<r\}$ : open ball of radius $r$ centered at $x$. 
\item $B(Y, r):= \left\{y\in X \ \middle| \ \displaystyle\inf_{y'\in Y}d(y,y')<r\right\}$ : open $r$-neighborhood of $Y$. 
\item ${\rm dist}(x,A):=\displaystyle\inf\{d(x,y) \ | \ y\in A\}$ : distance between $x$ and $A$. 
\item $\Diam(A):=\displaystyle\sup\{d(y,y') \ | \ y,y'\in A\}$ : diameter of $A$. 
\end{itemize}
We use the notation $\| \cdot\|$ (resp. $\langle\cdot, \cdot\rangle$) for the Euclidean norm (resp. inner product) 
on the Euclidean spaces.  We also use the notation $|A|$ for the Lebesgue measure of a Lebesgue measurable subset $A$. 

\section{Three distance functions on the set of convex bodies}
\label{Three distance functions}
Let $\calC_n$ be the set of all convex bodies in $\bbR^n$, i.e., $\calC_n$ is the set of all bounded closed convex sets  obtained as closures of open subsets in $\R^n$.

\subsection{$L^2$-Wasserstein distance}

For each $C\in\calC_n$ let $m_C$ be the probability measure on $\bbR^n$ with compact support defined by 
\[
m_C:=\frac{\chi_C}{\calH^n(C)}\calH^n, 
\]
where $\chi_C$ is the characteristic function of $C$ and $\calH^n$ is the $n$-dimensional Hausdorff measure on $\R^n$. Of course $\calH^n$ is equal to the $n$-dimensional Lebesgue measure $\calL^n$, however, since we put on the field of view of collapsing phenomena of convex bodies into lower dimensional objects,  we prefer to use the Hausdorff measure. 

\begin{defn}
Define a function $d^W:\calC_n\times \calC_n\to \bbR_{\geq 0}$ by 
\[
d^W(C_1, C_2):=W_2(m_{C_1}, m_{C_2}), 
\]where $W_2$ is the $L^2$-Wasserstein distance on the set of all probability measures on $\R^n$ with finite quadratic moment. 
\end{defn}
See Appendix~\ref{App.A} for basic definitions and facts on $L^2$-Wasserstein distance. 

\begin{lem}
$d^W$ is a distance function on $\calC_n$. 
\end{lem}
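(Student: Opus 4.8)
The plan is to reduce the entire statement to the fact that $W_2$ is already known to be a genuine distance on the space $\Pro_2(\R^n)$ of probability measures on $\R^n$ with finite quadratic moment (recalled in Appendix~\ref{App.A}). First I would observe that every $m_C$ has compact support, hence finite second moment, so that $d^W$ is well-defined and finite on $\calC_n\times\calC_n$. Nonnegativity of $d^W$ is then immediate, and symmetry and the triangle inequality are inherited directly from $W_2$: for $C_1,C_2,C_3\in\calC_n$ one has $d^W(C_1,C_2)=W_2(m_{C_1},m_{C_2})=W_2(m_{C_2},m_{C_1})=d^W(C_2,C_1)$, and likewise $d^W(C_1,C_3)=W_2(m_{C_1},m_{C_3})\le W_2(m_{C_1},m_{C_2})+W_2(m_{C_2},m_{C_3})=d^W(C_1,C_2)+d^W(C_2,C_3)$. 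These steps require no computation beyond the known metric axioms for $W_2$.

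The only point with genuine content is the implication $d^W(C_1,C_2)=0\Rightarrow C_1=C_2$, the converse being trivial since $C_1=C_2$ gives $m_{C_1}=m_{C_2}$. Because $W_2$ is a metric, $d^W(C_1,C_2)=0$ already forces $m_{C_1}=m_{C_2}$ as measures, so it suffices to prove that the assignment $C\mapsto m_C$ is injective. I would establish this by identifying the topological support of $m_C$ with $C$ itself. By definition every $C\in\calC_n$ is the closure of an open set, hence $C=\overline{\mathrm{int}\,C}$ and $|C|>0$; the density of $m_C$ equals the positive constant $1/\calH^n(C)$ on $C$ and vanishes off $C$. Thus $m_C$ assigns positive mass to every open set meeting $\mathrm{int}\,C$ while $m_C(\R^n\setminus C)=0$, which yields $\supp m_C=\overline{\mathrm{int}\,C}=C$. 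Consequently $m_{C_1}=m_{C_2}$ implies $\supp m_{C_1}=\supp m_{C_2}$, that is $C_1=C_2$, completing the argument.

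The main (and only) obstacle is the support computation, and it is a mild one: it is crucial here that convex bodies are taken to be closures of open sets, for otherwise lower-dimensional convex sets would all have vanishing $\calH^n$-measure and the map $C\mapsto m_C$ would fail to be injective. Once the support of $m_C$ is pinned down to be $C$, the separation axiom follows formally, and no quantitative estimate on the Wasserstein distance is needed beyond the metric property of $W_2$.
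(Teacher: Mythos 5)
Your proposal is correct and follows essentially the same route as the paper: the metric axioms other than non-degeneracy are inherited directly from $W_2$, and non-degeneracy is reduced to the identification $\supp(m_C)=C$, which you justify (as the paper implicitly does) using the convention that convex bodies are closures of open sets. Your write-up simply makes explicit the support computation that the paper states without elaboration.
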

\begin{proof}
Symmetricity, triangle inequality and non-negativity are clear. The non-degeneracy follows from 
the equivalence between the conditions 
$d^W(C_1, C_2)=W_2(m_{C_1}, m_{C_2})=0$ and 
$C_1=\supp(m_{C_1})=\supp(m_{C_2})=C_2$.
\end{proof}

\subsection{Lebesgue volume}
For $C_1, C_2\in\calC_n$, let $d^V(C_1, C_2)$ be the Lebesgue volume of the symmetric difference $C_1\bigtriangleup C_2:=(C_1\setminus C_2)\cup (C_2\setminus C_1)$ : 
\[
d^V(C_1, C_2):=|C_1\bigtriangleup C_2|=\int_{\R^n}\chi_{C_1 \bigtriangleup C_2}(x)\calL^n(dx). 
\]
This $d^V$ is indeed a distance function on $\calC_n$ and used in a study of convex bodies classically. 
See \cite{Dinghas} or \cite{ShephardWebster} for example. 

\subsection{Hausdorff distance}
Let $d^H$ be the Hausdorff distance on the set of all compact subsets in $\R^n$.We also denote the restriction of $d^H$ to $\calC_n$ by the same letter $d^H$ : 
\[
d^H(C_1, C_2):=\max\{\max_{x\in C_1}\min_{y\in C_2}\|x-y\|,  \ \max_{y\in C_2}\min_{x\in C_1}\|x-y\| \} \quad 
(C_1, C_2\in \calC_n). 
\]

\section{Relation of distance functions}
\label{Relation of distance fuctions}
\subsection{Equivalence among $d^W$, $d^V$ and $d^H$}

It is known that two distance functions $d^V$ and $d^H$ give the same metric topology. 
More precisely in \cite{ShephardWebster} it is shown that a sequence $\{P_i\}_i$ in $\calC_n$ converges to $Q\in\calC_n$ in $d^V$ if and only if 
it converges to $Q$ in $d^H$. 

\begin{lem}\label{dVtodW}
For a sequence $\{P_i\}_i$ in $\calC_n$ and $Q\in\calC_n$, if $d^V(P_i,Q)\to 0 \ (i\to\infty)$, then we have $d^W(P_i ,Q)\to 0 \ (i\to \infty)$. 
\end{lem}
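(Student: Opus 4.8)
The plan is to reduce the $L^2$-Wasserstein statement to two ingredients — uniform boundedness of the supports and convergence of the normalized densities in $L^1$ — and then to exhibit an explicit coupling whose cost is controlled by the product of the squared diameter and the $L^1$-error. The conceptual point to keep in mind throughout is that an $L^1$ (equivalently, total variation) bound on densities does \emph{not} by itself control a transport distance, so the boundedness of the supports will have to do the essential work.

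First I would secure a uniform bound on the supports. By the equivalence of $d^V$ and $d^H$ recalled above (from \cite{ShephardWebster}), the hypothesis $d^V(P_i,Q)\to 0$ gives $d^H(P_i,Q)\to 0$; hence there is $R_0>0$ with $P_i\subseteq B(Q,1)\subseteq B(0,R_0)$ for all large $i$, and also $Q\subseteq B(0,R_0)$. Thus all the measures $m_{P_i}$ and $m_Q$ are supported in a common ball of diameter $D:=2R_0$. (Alternatively, uniform boundedness can be obtained directly from convexity: since $|P_i\cap Q|\to|Q|>0$, the convex set $P_i\cap Q$ eventually contains a fixed ball, and a point of $P_i$ far from $Q$ would force, by taking convex hulls, a large contribution to $|P_i\setminus Q|$, contradicting $d^V(P_i,Q)\to 0$.)

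Next I would show that the densities converge in $L^1$. Writing the densities of $m_{P_i}$ and $m_Q$ as $\chi_{P_i}/|P_i|$ and $\chi_Q/|Q|$ and splitting $\R^n$ into $P_i\cap Q$, $P_i\setminus Q$ and $Q\setminus P_i$, one gets
\[
\int_{\R^n}\left|\frac{\chi_{P_i}}{|P_i|}-\frac{\chi_Q}{|Q|}\right|\,dx
= |P_i\cap Q|\,\frac{\bigl||Q|-|P_i|\bigr|}{|P_i|\,|Q|}+\frac{|P_i\setminus Q|}{|P_i|}+\frac{|Q\setminus P_i|}{|Q|}.
\]
Since $\bigl||P_i|-|Q|\bigr|\le |P_i\bigtriangleup Q|=d^V(P_i,Q)\to 0$, since $|P_i\setminus Q|,\,|Q\setminus P_i|\le d^V(P_i,Q)\to 0$, and since $|P_i|\to|Q|>0$ is eventually bounded away from $0$, each of the three terms tends to $0$; denote this $L^1$-error by $\delta_i\to 0$ (it equals twice the total variation distance between $m_{P_i}$ and $m_Q$).

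Finally I would estimate $d^W$ by a coupling. Let $c_i:=\min\{1/|P_i|,1/|Q|\}$ and place the measure $c_i\,\chi_{P_i\cap Q}\,dx$ on the diagonal; this transports the common part of the two measures at zero cost. The remaining masses of $m_{P_i}$ and of $m_Q$ are each equal to $\delta_i/2$, and coupling them in any way (e.g. by a normalized product measure) costs at most $D^2$ per unit of transported mass, because both remainders are supported in the ball of diameter $D$. This yields
\[
d^W(P_i,Q)^2=W_2(m_{P_i},m_Q)^2\le D^2\,\frac{\delta_i}{2}\longrightarrow 0,
\]
which is the claim. I expect the main obstacle to be conceptual rather than computational: it is precisely the uniform boundedness from the first step that lets the excess mass be moved only within a region of bounded diameter, so that the vanishing $L^1$-error translates into a vanishing transport cost; without it, $L^1$-closeness of the densities would not suffice.
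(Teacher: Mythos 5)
Your proof is correct and follows essentially the same route as the paper: both arguments first use the $d^V$--$d^H$ equivalence to confine all the supports to a common bounded region, and then exhibit an explicit coupling whose quadratic cost is at most $(\mathrm{diameter})^2$ times the relative volume of the symmetric difference, so that $d^W(P_i,Q)=O\bigl(\sqrt{d^V(P_i,Q)/|Q|}\bigr)$. The only difference is in the packaging of the coupling: you keep the common mass $\min\{1/|P_i|,1/|Q|\}\,\chi_{P_i\cap Q}\,\mathcal{L}^n$ fixed on the diagonal and couple the two (equal-mass) remainders by a normalized product measure, whereas the paper writes down a case-dependent coupling $\xi_1$ or $\xi_2$ achieving the same estimate.
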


\begin{proof}
Since $\displaystyle\lim_{i\to \infty}d^V(P_i, Q)=0$ implies $\displaystyle\lim_{i\to \infty}d^H(P_i, Q)=0$
we may assume that 
\[
K_i:=\Diam(P_i)\leq 100K:=100\Diam(Q),  
\]
and $|\log(|P_i|/|Q|)|<\epsilon$ for small $\epsilon>0$ and any $i$ large enough.  
Now we define couplings $\xi_i\in{\Cpl}(m_{P_i}, m_Q)$ ($i=1,2$) by 
\[
\xi_1(X_1\times X_2):=m_Q(X_1\cap X_2\cap P_i\cap Q)m_{P_i}(X_2)+
m_Q(X_1\setminus (X_2\cap P_i))m_{P_i}(X_2) 
\]when $|Q|\geq|P_i|$ and 
\[
\xi_2(X_1\times X_2):=m_Q(X_1)m_{P_i}(X_1\cap X_2\cap P_i\cap Q)+
m_Q(X_1) m_{P_i}(X_2\setminus(Q\cap X_1))
\]when $|P_i|\geq |Q|$. 
Then we have 

\begin{align}
d^W(P_i,Q) \notag &\leq 
\sqrt{\int_{\R^n\times \R^n}\Vert x-y \Vert^2\xi_1(dx,dy)}+
\sqrt{\int_{\R^n\times \R^n}\Vert x-y \Vert^2\xi_2(dx,dy)} \notag \\ 
  &\leq \sqrt{\frac{\vert Q\setminus P_i\vert}{\vert Q\vert}\cdot (101 K)^2}+\sqrt{\frac{\vert P_i\setminus Q\vert}{\vert P_i\vert}\cdot (101K)^2}  \notag\\
  &\leq 2\cdot 101 K\sqrt{\frac{\vert Q\bigtriangleup P_i\vert}{\min\{\vert Q\vert,\vert P_i\vert\}}}\notag\\
  &\leq 2\cdot 101 K\sqrt{\frac{d^V(Q,P_i)}{e^{-\epsilon}\vert Q\vert}}
\rightarrow0 \ ({\rm as} \ i\to\infty).  \notag
 \end{align}

\end{proof}

\begin{lem}\label{dWtodV}
For a sequence $\{P_i\}_i$ in $\calC_n$ and $Q\in\calC_n$, if $d^W(P_i,Q)\to 0 \ (i\to\infty)$, then we have $d^V(P_i ,Q)\to 0 \ (i\to \infty)$. 
\end{lem}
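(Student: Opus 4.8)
The plan is to show that $d^W(P_i,Q)\to 0$ forces $d^H(P_i,Q)\to 0$; once this is established, the classical equivalence of $d^H$ and $d^V$ from \cite{ShephardWebster} immediately yields $d^V(P_i,Q)\to 0$. I argue by contradiction: if $d^H(P_i,Q)\not\to 0$, then after passing to a subsequence I may assume $d^H(P_i,Q)\ge\delta$ for some $\delta>0$. Using the facts on the $L^2$-Wasserstein distance recorded in Appendix~\ref{App.A}, the convergence $W_2(m_{P_i},m_Q)\to 0$ supplies three pieces of information at once: the measures $m_{P_i}$ converge weakly to $m_Q$; the barycenters converge, $\bary(m_{P_i})\to\bary(m_Q)$, since the barycenter map is $1$-Lipschitz for $W_2$; and the second moments converge, so that $\int_{\bbR^n}\|x-\bary(m_{P_i})\|^2\,dm_{P_i}$ is bounded uniformly in $i$.

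The first key step, and the one I expect to be the main obstacle, is to promote this uniform second-moment bound to a uniform bound on the diameters $\di(P_i)$, which confines all $P_i$ (for large $i$) to a fixed ball $B(0,R)$. The point is purely convex-geometric. For a convex body $P$, fix a unit vector $u$ realizing $\di(P)$, so that the extent of $P$ in the direction $u$ is at least $\di(P)$. By Brunn's concavity principle the one-dimensional marginal of $m_P$ in the direction $u$ is a $1/(n-1)$-concave probability density supported on an interval of length at least $\di(P)$, and a short computation (using the concavity of the $(n-1)$-st root of the density) shows that the variance of any such density is bounded below by $c_n\,\di(P)^2$ for a dimensional constant $c_n>0$. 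Since this variance is at most $\int_{\bbR^n}\|x-\bary(m_P)\|^2\,dm_P$, the uniform second-moment bound gives $\di(P_i)\le C_n$; combined with the convergence of the barycenters, this yields $P_i\subseteq B(0,R)$ for all large $i$.

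With uniform boundedness in hand I invoke the Blaschke selection theorem to extract a further subsequence along which $P_i$ converges in $d^H$ to a compact convex set $K\subseteq B(0,R)$. This $K$ must be nondegenerate, hence a genuine element of $\calC_n$: otherwise $|K|=0$, and since the Lebesgue volume is continuous under Hausdorff convergence of convex sets one would have $|P_i|\to 0$, forcing the weak limit of $m_{P_i}$ to be concentrated on the null set $K$, which contradicts $m_{P_i}\to m_Q$ together with the absolute continuity of $m_Q$. Finally, since $P_i\to K$ in $d^H$ with $K\in\calC_n$, the equivalence of $d^H$ and $d^V$ followed by Lemma~\ref{dVtodW} gives $d^W(P_i,K)\to 0$; combined with $d^W(P_i,Q)\to 0$ and the triangle inequality this forces $d^W(K,Q)=0$, hence $K=Q$ by the non-degeneracy of $d^W$. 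But then $d^H(P_i,Q)=d^H(P_i,K)\to 0$ along the subsequence, contradicting $d^H(P_i,Q)\ge\delta$. This contradiction completes the proof.
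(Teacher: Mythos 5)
Your proof is correct, but it follows a genuinely different route from the paper's. The paper argues directly inside optimal transport: it extracts from Corollary~\ref{cor:inprob} a sequence of Monge maps $T_i$ pushing $m_Q$ to $m_{P_i}$ that converge to the identity in probability, builds the auxiliary set $U^i=\bigcup_{x\in Q,\,r^i_x\le\eta}\overline{B(x,r^i_x)}$, and bounds $|Q\bigtriangleup U^i|$ and $|U^i\bigtriangleup P_i|$ separately to estimate $d^V(P_i,Q)$ by hand; the only a priori control it needs is boundedness of $|P_i|$, which it gets from $m_{P_i}(Q)\to 1$. You instead run a compactness argument: the quantitative step is your Brunn-type inequality $\di(P)^2\le C_n\int\|x-\bary(C)\|^2\,dm_P$ (which is correct --- the $1/(n-1)$-concavity of the marginal forces $\max f\le n/\di(P)$, whence the variance lower bound; for $n=1$ the marginal is uniform and the bound is trivial), after which Blaschke selection, the identification of the limit $K$ via the already-proven Lemma~\ref{dVtodW} and the non-degeneracy of $d^W$, and the Shephard--Webster equivalence finish the job. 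Your argument buys the stronger intermediate statement $d^H(P_i,Q)\to 0$ directly and avoids the disintegration-free but somewhat delicate $U^i$ construction, at the cost of importing classical convex geometry (Brunn's concavity, Blaschke selection, continuity of volume under Hausdorff convergence) that the paper does not use here; the paper's proof stays entirely within the Wasserstein framework and is closer to being quantitative in $d^V$. Both are valid proofs of the lemma.
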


\begin{proof}
Suppose that $d^W(P_i,Q)\to 0 \ (i\to\infty)$. 
Then, $m_i:=m_{P_i}$ converges weakly to $m:=m_Q$, in particular, we have
\[
m_i(Q)=\frac{|P_i\cap Q|}{|P_i|}\to m(Q)=1 
\]
by Theorem~\ref{thm:propertyweak}. 
Since $|P_i\cap Q|\leq |Q|$ we have $|P_i|$ is bounded, and hence, 
\[
\frac{|P_i|}{|Q|} <c
\] for some $c>0$. 
Corollary~\ref{cor:inprob} implies that  for two probability measures $m_i$ and $m$ 
there exist a sequence of Borel measurable maps $\{T_i:\R^n\to\R^n\}_i$ such that 
$(\mathtt{id}\times T_i)_*m\in\Opt(m, m_i)$ for all $i$ and 
\[
m(\{x\in Q \  | \ \|x-T_i(x)\|\geq a\})=m(\{x\in \R^n \ | \ \|x-T_i(x)\|\geq a\}) \to 0 \ (i\to\infty)
\]
for all $a>0$. 
Let us fix an arbitrary positive number $\epsilon$ and set 
\[
\xi:= \frac{\epsilon}{(c+1)(|Q|+1)}. 
\]
Choose $\eta$ small enough so that 
\[
|B(Q,\eta)\setminus Q|<
\xi. 
\]
There exists $N\in\bbN$ such that 
\[
m(\{x\in Q \ | \ \|T_i(x)-x\|\geq \eta\})<\xi
\]
for all $i \geq N$. 
Take and fix $i> N$. 
For $x\in Q$ we put $r_x^i:=\|x-T_i(x)\|$. Then we have 
$\displaystyle Q\subset \bigcup_{x\in Q}B(x, r_x^i)$. 
We put 
\[
U^i:=\bigcup_{x\in Q, r_x^i\leq \eta}\overline{B(x, r_x^i)}. 
\]We have 
\begin{align}
|U^i\setminus Q|&\leq |B(Q,\eta)\setminus Q|<\xi, \notag \\ 
|Q\setminus U^i|&=|Q| m(Q\setminus U^i)\notag \\ 
&\leq |Q| m(\{x\in Q \ | \  \|x-T_i(x)\|)\geq\eta\}) \notag \\ 
&<|Q|\xi \notag, 
\end{align}
and hence, $|Q\bigtriangleup U^i|<(|Q|+1)\xi$. 
On the other hand we have 
\begin{align}
|P_i\setminus U^i|&=|P_i|m_i(P_i\setminus U^i) \notag \\
&= |P_i|(T_i)_*m(P_i\setminus U^i) \notag \\ 
&= |P_i| m(T_i^{-1}(P_i)\setminus T_i^{-1}(U^i)).  \notag 
\end{align}
Since $(T_i)_*m=m_i$ we have that $T_i^{-1}(P_i)=Q$ ($m$-a.e.). 
This fact and $T_i^{-1}(\overline{B(x,r_x^i)})\ni x$ imply that \[
T_i^{-1}(U^i)\supset \{x\in Q \ | \ \|x-T_i(x)\|\leq \eta\}. 
\] 
In particular we have 
\[
|P_i\setminus U^i|\leq |P_i| m(\{x\in Q \ | \ \|x-T_i(x)\|>\eta\})\leq |P_i|\xi. 
\]
Similarly we have 
\begin{align}
\vert U^i\setminus P_i\vert&=\vert P_i\vert m_i(U^i\setminus P_i)=\vert P_i\vert m(T_i^{-1}(U^i)\setminus Q)\notag\\
&\leq\vert P_i\vert m(B(Q,\eta)\setminus Q)=\frac{\vert P_i\vert}{\vert Q\vert}\vert B(Q,\eta)\setminus Q\vert\notag\\
&<\frac{\vert P_i\vert}{\vert Q\vert}\xi\leq c\xi, \notag
\end{align}
and hence $\vert U^i\bigtriangleup P_i\vert\leq (\vert P_i\vert+c)\xi$. 
Therefore we have 
\begin{align}
&d^V(P_i,Q)=\vert Q\bigtriangleup P_i\vert\leq \vert Q\bigtriangleup U^i\vert+\vert U^i\bigtriangleup P_i\vert\notag\\
&\leq (\vert Q\vert+\vert P_i\vert+c+1)\xi\leq ((1+c)\vert Q\vert+c+1)\xi=(1+c)(\vert Q\vert+1)\xi\notag\\
&=\epsilon. 
\notag
\end{align}
Since $\epsilon > 0$ is arbitrary, we obtain the conclusion, that is, $d^V(P_i,Q)\rightarrow 0$.
\end{proof}

As a corollary of Lemma~\ref{dVtodW} and Lemma~\ref{dWtodV} we have the following by 
 Kratowski's axiom and the coincidence between the metric topology of $d^V$ and $d^H$ as 
 shown in \cite{ShephardWebster}. 

\begin{thm}\label{thm:equivdVdWdH}
Three metric topologies on $\calC_n$ determined by $d^W$, $d^V$ and $d^H$ coincide with each other. 
\end{thm}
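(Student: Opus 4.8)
The plan is to reduce the statement to the three equivalences of sequential convergence that are now at our disposal, and then to invoke the fact that a metric topology is completely determined by its convergent sequences. First I would record the sequential equivalences. Lemma~\ref{dVtodW} and Lemma~\ref{dWtodV} together say that for a sequence $\{P_i\}_i$ in $\calC_n$ and a point $Q\in\calC_n$ one has $d^V(P_i,Q)\to 0$ if and only if $d^W(P_i,Q)\to 0$; the classical result of \cite{ShephardWebster} gives, for the same data, that $d^V(P_i,Q)\to 0$ if and only if $d^H(P_i,Q)\to 0$. Chaining these two biconditionals, the three statements $d^W(P_i,Q)\to 0$, $d^V(P_i,Q)\to 0$ and $d^H(P_i,Q)\to 0$ become mutually equivalent. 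Thus the three metrics have exactly the same convergent sequences, converging to exactly the same limits.

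Next I would translate this equality of convergent sequences into equality of topologies. Since each of $d^W$, $d^V$ and $d^H$ is a genuine distance function on $\calC_n$ (the $d^W$ case being the content of the earlier lemma, and the other two being classical), the associated spaces are metric, hence first-countable and therefore sequential. For such spaces the closure operator is the sequential closure: for any $A\subset\calC_n$ one has $\overline{A}=\{Q\in\calC_n \mid \exists\, P_i\in A \text{ with } P_i\to Q\}$, and by Kuratowski's characterization a topology is uniquely determined by its closure operator. Because the three metrics yield identical convergent sequences with identical limits, they produce the same sequential closure operator on every subset, hence the same family of closed sets, and therefore the same topology. This is precisely the assertion of the theorem.

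I do not anticipate a genuine obstacle at this stage, since all of the analytic content has already been absorbed into Lemma~\ref{dVtodW}, Lemma~\ref{dWtodV}, and the cited volume--Hausdorff equivalence. The one point that deserves explicit care is the logical passage from \emph{the same sequences converge} to \emph{the same topology}: this inference is valid for metric (more generally, first-countable) spaces but can fail for general topological spaces, so I would state plainly that we are exploiting the first-countability of metric topologies when applying Kuratowski's axioms. With that caveat recorded, the coincidence of the three topologies follows at once.
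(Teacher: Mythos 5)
Your proposal is correct and follows essentially the same route as the paper: the paper likewise deduces the theorem directly from Lemma~\ref{dVtodW}, Lemma~\ref{dWtodV}, the volume--Hausdorff equivalence of Shephard--Webster, and Kuratowski's closure axioms for metric (hence sequential) topologies. Your write-up merely makes explicit the passage from ``same convergent sequences'' to ``same topology,'' which the paper leaves implicit.
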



\subsection{Moduli space of convex bodies and its topology}
We introduce the moduli space of convex bodies following \cite{FujitaOhashi} and \cite{PPRS}. 
Let $G_n:={\rm AGL}(n,\bbZ)$ be the integral affine transformation group. Namely $G_n$ is the direct product ${\rm GL}(n,\bbZ)\times \bbR^n$ as a set and the multiplication on $G_n$ is defined by 
\[
(A_1, t_1)\cdot(A_2, t_2):=(A_1A_2, A_1t_2+t_1)
\]for each $(A_1, t_1), (A_2, t_2)\in G_n$. 
This group $G_n$ acts on $\calC_n$ in a natural way, and $C\in \calC_n$ and $C'\in\calC_n$ are called {\it $G_n$-congruent} if $C$ and $C'$ are contained in the same $G_n$-orbit. 
\begin{defn}
The moduli space of convex bodies $\widetilde\calC_n$ with respect to the $G_n$-congruence is defined by the quotient 
\[
\widetilde\calC_n:=\calC_n/G_n. 
\]
\end{defn}
Let $\pi $ be the natural projection from $\calC_n$ to $\widetilde\calC_n$. 
\begin{defn}\label{deftilded}
Define a function $D^V:\widetilde\calC_n\times\widetilde\calC_n\to \R$ by 
\[
D^V(\alpha, \beta):=\inf\{d^V(P_1, P_2) \ | \ \pi(P_1)=\alpha, \pi(P_2)=\beta\}
\] for $(\alpha, \beta)\in \widetilde\calC_n\times\widetilde\calC_n$. 
\end{defn}

\begin{thm}[\cite{FujitaOhashi}]
$D^V$ is a distance function on $\widetilde\calC_n$ and its metric topology coincides with the quotient topology 
induced from $\pi$. 
\end{thm}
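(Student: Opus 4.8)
The plan is to treat $D^V$ as the orbit (pseudo)distance associated with the $G_n$-action on $(\calC_n,d^V)$, and the crucial preliminary observation is that $G_n$ acts by $d^V$-isometries. Indeed, for $g=(A,t)\in G_n$ one has $g\cdot(C_1\bigtriangleup C_2)=(g\cdot C_1)\bigtriangleup(g\cdot C_2)$, and since $A\in\GL(n,\bbZ)$ satisfies $|\det A|=1$ the volume is preserved, so $d^V(g C_1, g C_2)=|\det A|\,d^V(C_1,C_2)=d^V(C_1,C_2)$. Non-negativity and symmetry of $D^V$ are then immediate from those of $d^V$, and finiteness is clear since orbits are nonempty. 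For the triangle inequality I would argue as follows: given $\alpha,\beta,\gamma$ and $\epsilon>0$, choose representatives realizing $D^V(\alpha,\beta)$ and $D^V(\beta,\gamma)$ up to $\epsilon$; the two chosen representatives of $\beta$ lie in the single orbit $\pi^{-1}(\beta)$, hence differ by some $g\in G_n$, so applying $g$ to the first pair (which preserves $d^V$) produces representatives with a common middle term. The ordinary triangle inequality for $d^V$, followed by letting $\epsilon\to0$, then yields $D^V(\alpha,\gamma)\le D^V(\alpha,\beta)+D^V(\beta,\gamma)$.

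The only nontrivial metric axiom is non-degeneracy, and this is where I expect the main difficulty. Suppose $D^V(\alpha,\beta)=0$. Fixing a representative $P$ of $\alpha$ and using the isometric action to translate every competitor into the orbit of $\beta$, one obtains a sequence $R_k\in\pi^{-1}(\beta)$ with $d^V(P,R_k)\to0$, hence $R_k\to P$ in $d^H$ by Theorem~\ref{thm:equivdVdWdH}. Thus non-degeneracy reduces to the statement that every $G_n$-orbit in $\calC_n$ is closed in the $d^H$-topology; granting this, $P$ lies in $\pi^{-1}(\beta)$ and so $\alpha=\beta$.

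To prove that orbits are $d^H$-closed, which is the heart of the argument, fix a representative $Q$ of $\beta$ and write $R_k=(A_k,t_k)\cdot Q$ with $A_k\in\GL(n,\bbZ)$ and $t_k\in\R^n$, so that $A_kQ+t_k\to P$ in $d^H$. Since $Q$ is a convex body it contains a Euclidean ball of some fixed radius $r>0$, whence $\Diam(A_kQ)\ge 2r\,\|A_k\|_{\mathrm{op}}$; as $d^H$-convergence keeps the diameters bounded, $\|A_k\|_{\mathrm{op}}$ stays bounded. Combined with $|\det A_k|=1$, this bounds the singular values of $A_k$ away from $0$ and $\infty$, so $\{A_k\}$ is a bounded subset of $\GL(n,\R)$; as $\GL(n,\bbZ)$ is discrete, the set $\{A_k\}$ is finite, and after passing to a subsequence $A_k\equiv A$. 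Then $AQ+t_k\to P$ forces $t_k\to t$ (for instance by tracking centroids), whence $P=(A,t)\cdot Q$ lies in the orbit of $Q$, as required.

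Finally, for the coincidence of topologies I would show that $\pi\colon(\calC_n,d^V)\to(\widetilde\calC_n,D^V)$ is simultaneously $1$-Lipschitz and open. The bound $D^V(\pi(P_1),\pi(P_2))\le d^V(P_1,P_2)$ gives continuity, so every $D^V$-open set is open in the quotient topology. Conversely, the isometry property yields the exact identity $\pi\bigl(B_{d^V}(P,r)\bigr)=B_{D^V}(\pi(P),r)$: the inclusion ``$\subseteq$'' is the Lipschitz bound, while ``$\supseteq$'' follows by choosing a near-optimal pair for a point $\beta\in B_{D^V}(\pi(P),r)$ and sliding it by a group element into $B_{d^V}(P,r)$. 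Hence $\pi$ is open, so any quotient-open set $U$ contains, around each of its points, the $\pi$-image of a $d^V$-ball, i.e.\ a $D^V$-ball; therefore $U$ is $D^V$-open and the two topologies agree.
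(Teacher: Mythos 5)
The paper states this theorem as an imported result from \cite{FujitaOhashi} and gives no proof of its own, so there is nothing internal to compare against; your argument therefore has to stand on its own, and it does. The reductions (isometry of the $G_n$-action on $(\calC_n,d^V)$ via $|\det A|=1$, the $\epsilon$-and-slide proof of the triangle inequality, the ball identity $\pi(B_{d^V}(P,r))=B_{D^V}(\pi(P),r)$ giving openness and hence the coincidence of topologies) are all correct, and you correctly isolate the one genuinely nontrivial point: non-degeneracy, which you reduce to $d^H$-closedness of $G_n$-orbits. Your proof of that lemma is sound --- a convex body has nonempty interior, so $\Diam(A_kQ)\geq 2r\|A_k\|_{\mathrm{op}}$ bounds the $A_k$, the determinant condition bounds them away from degeneracy, discreteness of $\GL(n,\bbZ)$ forces a constant subsequence, and convergence of barycenters pins down $t_k$. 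The only step worth making explicit is the passage from $d^V(P,R_k)\to 0$ to $d^H(P,R_k)\to 0$, which relies on the Shephard--Webster equivalence quoted in Section 3.1 (or on Theorem~\ref{thm:equivdVdWdH}); with that citation in place the proof is complete.
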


This $G_n$-action and the moduli space $\widetilde\calC_n$ arise naturally in the context of the geometry of 
symplectic toric manifolds. In the subsequent sections we will discuss from such point of view. 

\begin{Rem}\label{rem:minvar}
As it is noted in \cite{FujitaOhashi} we can not define a distance function on $\widetilde\calC_n$ 
by using the infimum of $d^H$ (or $d^W$) among all representatives, though, one may hope that 
by considering infimum of $d^H$ among only \lq\lq standard\rq\rq   representatives  
we can define a distance function on $\widetilde\calC_n$. 
One possible candidates of \lq\lq standard\rq\rq   representatives are the minimum 
variance (or quadratic moment) elements in the following sense. 

For each $C\in \calC_n$ define its variance by 
\[
\Var(C):=\frac{1}{|C|}\int_C\|x-\bary(C)\|^2\calL^n(dx), 
\]where $\bary(C)$ is the barycenter of $C$ which is determined uniquely by the condition 
\[
\langle \bary(C), y \rangle =\int_{\bbR^n}\langle x, y \rangle \calL^n(dx)
\] for any $y\in\bbR^n$. See \cite{Stbary} for example. 
The minimum variance element $C\in\calC_n$ is an element of 
\[
\mathsf{argmin}\left\{ \Var(C') \ | \ C'\in\calC_n \  {\rm is} \ G_n \text{-congruent  to} \ C\right\}. 
\] One can see that for any $C\in\calC_n$ there exist at least one and finitely many minimum variance elements 
which have the common barycenter are $G_n$-congruent to $C$. 
\end{Rem}


\section{Delzant polytopes and symplectic toric manifolds }
\label{Delzant polytopes}
\subsection{Delzant polytopes, symplectic toric manifolds and their moduli space}

\begin{defn}\label{def:delzantpoly}
A convex polytope $P$ in $\R^n$ is called a {\it Delzant polytope} if $P$ satisfies the following conditions : 
\begin{itemize}
\item $P$ is simple, that is, each vertex of $P$ has exactly $n$ edges. 
\item $P$ is rational, that is, at each vertex all directional vectors of edges can be taken as integral vectors in $\bbZ^n$. 
\item $P$ is smooth, that is, at each vertex we can take integral directional vectors of edges as a $\bbZ$-basis of $\bbZ^n$ in $\bbR^n$. 
\end{itemize}
We denote the subset of $\calC_n$ consisting of all Delzant polytopes by $\calD_n$ and define their moduli space by $\widetilde\calD_n:=\calD_n/G_n$. 
\end{defn}
Recall that a {\it symplectic toric manifold} $(M,\omega, \rho, \mu)$ is a data consisting of 
\begin{itemize}
\item a compact connected symplectic manifold $(M,\omega)$ of dimension $2n$, 
\item a homomorphism $\rho$ from the $n$-dimensional torus $T^n$ to the group of symplectomorphisms of $M$ which gives a Hamiltonian action of $T^n$ on $M$ and 
\item a moment map $\mu:M\to \bbR^n=({\rm Lie}(T^n))^*$. 
\end{itemize}

The famous Delzant construction gives a correspondence between Delzant polytopes  and symplectic toric manifolds. 
\begin{thm}[\cite{KarshonKessler}]
The Delzant construction gives a bijective correspondence between $\widetilde\calD_n$ and the set of all weak isomorphism classes of $2n$-dimensional symplectic toric manifolds. 
\end{thm}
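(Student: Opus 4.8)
The plan is to prove the two halves of the bijection separately --- \emph{existence} (every Delzant polytope arises as the moment image of some symplectic toric manifold) and \emph{uniqueness} (the polytope determines the manifold up to weak isomorphism) --- and then to verify that both constructions are compatible with the $G_n$-action, so that the correspondence descends to a bijection between $\widetilde\calD_n$ and the set of weak isomorphism classes. For the existence half I would use Delzant's symplectic reduction, and for the uniqueness half a Moser-type interpolation argument built on the local normal form for Hamiltonian torus actions.

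For existence, given a Delzant polytope written as $P=\{x\in\bbR^n : \langle x, u_i\rangle \le \lambda_i,\ i=1,\dots,d\}$ with the $u_i\in\bbZ^n$ the primitive inward normals of the $d$ facets, I would introduce the linear map $\pi\colon\bbR^d\to\bbR^n$ sending the standard basis vectors to the $u_i$. The smoothness condition guarantees that the normals meeting at any vertex form a $\bbZ$-basis of $\bbZ^n$; this is exactly what forces $\pi$ to carry $\bbZ^d$ onto $\bbZ^n$, so that $\pi$ descends to a surjection of tori $T^d\to T^n$ whose kernel $N$ is a \emph{connected} subtorus of dimension $d-n$. Equipping $\bbC^d$ with its standard symplectic structure and the Hamiltonian $T^d$-action, I would restrict the moment map to $N$ and form the symplectic quotient $M_P:=\phi_N^{-1}(c)/N$ at the level $c$ determined by the $\lambda_i$. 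The crucial point, again a consequence of the Delzant condition, is that $N$ acts freely on this level set, so that $M_P$ is a smooth compact symplectic $2n$-manifold on which the residual torus $T^n=T^d/N$ acts Hamiltonianly with moment image exactly $P$.

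For uniqueness, suppose two symplectic toric manifolds share the moment polytope $P$. Working face by face over $P$, I would invoke the local normal form theorem (equivalently, the equivariant Darboux--Weinstein theorem near orbits) to produce local equivariant symplectomorphisms covering the identity on a neighborhood of each face, and then patch them with an equivariant partition of unity. Pulling one structure back to the other yields two $T^n$-invariant symplectic forms that are equivariantly cohomologous, and a Moser deformation along the straight-line path between them, performed equivariantly, produces a global equivariant symplectomorphism intertwining the two toric structures. This shows that $P$ determines $M_P$ up to strong isomorphism. Finally I would track the symmetries: an element of ${\rm GL}(n,\bbZ)$ acts on $\bbR^n=({\rm Lie}(T^n))^*$ and reparametrizes the torus, turning a strong isomorphism into a weak one, while a translation of $P$ corresponds to shifting the moment map by a constant; together these generate the $G_n={\rm AGL}(n,\bbZ)$-action. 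Checking that the reduction construction is $G_n$-equivariant, and conversely that weakly isomorphic manifolds have $G_n$-congruent polytopes, upgrades the strong correspondence to the asserted bijection on $\widetilde\calD_n$.

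I expect the uniqueness half to be the main obstacle. The existence construction is essentially algebraic, but globalizing the local normal forms into a single equivariant symplectomorphism requires careful bookkeeping of the Moser argument across the stratification of $M_P$ by orbit types, which is precisely where the combinatorial smoothness of $P$ must be leveraged a second time to control how the strata fit together.
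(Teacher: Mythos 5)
The paper does not actually prove this statement: it is imported from Karshon--Kessler (ultimately from Delzant), and the only related material in the text is the review of the Delzant construction in Section~\ref{subsec:Delcon}, which coincides with your existence half (your $\pi$, $N$, and level set are the paper's $\tilde\pi$, $H$, and $(\iota^*\circ\tilde\mu)^{-1}(0)$). So your proposal can only be measured against the standard argument in the literature, not against anything in the paper itself.

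Against that standard, the existence half and the $G_n$-equivariance bookkeeping are fine, but the uniqueness half as written has a genuine gap. Local equivariant symplectomorphisms cannot be ``patched with an equivariant partition of unity'': symplectomorphisms do not form a module over functions, so there is nothing to average. What Delzant actually does is compare any two local trivializations over an open set $U\subset P$ and observe that they differ by an automorphism of the local model over $U$; these automorphisms form an abelian, fine sheaf on $P$ (generated by Hamiltonian flows of pullbacks of functions on $U$), so the obstruction to globalizing is a \v{C}ech class over the convex, hence contractible, polytope, and it is this vanishing --- not a partition of unity applied to the maps themselves --- that yields the global equivariant symplectomorphism. Your Moser step also presupposes that the straight-line path between the two invariant forms stays nondegenerate, which is not automatic and is not needed once the \v{C}ech argument is in place. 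With the uniqueness half replaced by Delzant's cohomological argument (or by a citation of Karshon--Kessler's uniqueness theorem, which is also what handles the passage from strong to weak isomorphism rigorously), the outline is correct.
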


Here two symplectic toric manifolds $(M_1, \omega_1, \rho_1,\mu_1)$ and $(M_2, \omega_2, \rho_2, \mu_2)$ are {\it weakly isomorphic}\footnote{In \cite{KarshonKessler} the equivalence relation \lq\lq weakly isomorphism \rq\rq \ is called 
just \lq\lq  equivalent \rq\rq. In this paper we follow the terminology in \cite{PPRS}.} if there exist a diffeomorphism $f:M_1\to M_2$ and a group isomorphism $\phi:T^n\to T^n$ such that 
\[
f^*\omega_2=\omega_1 \ {\rm and} \ \rho_1(g)(x)=\rho_2(\phi(g))(f(x)) \ {\rm for\ all }  \ (g,x)\in T^n\times M_1. 
\]

Based on the above fact the moduli space $\widetilde\calD_n$ is also called the {\it moduli space of toric manifolds} in \cite{PPRS}. 
In \cite{PPRS} they show that $(\calD_n,d^V)$ is neither complete nor locally compact and 
$\widetilde\calD_2$ is path connected.

\subsection{Brief review on the Delzant construction}\label{subsec:Delcon}
For later convenience we give a brief review on the Delzant construction here. 

Let $P$ be an $n$-dimensional Delzant polytope and
\begin{equation}\label{l^r}
l^{(r)}(x):=\langle x, \nu^{(r)}\rangle - \lambda^{(r)}=0 \quad (r=1,\cdots, N)
\end{equation}
a system of defining affine equations on $\bbR^n$ of facets of $P$, 
each $\nu^{(r)}$ being inward pointing normal vector of $r$-th facet and $N$ is the number of facets of $P$. In other words $P$ can be described as 
\[
P=\bigcap_{r=1}^N\{x\in\bbR^n \ | \ l^{(r)}(x)\geq 0\}. 
\]
 We may assume that each $\nu^{(r)}$ is primitive\footnote{An integral vector $u$ in $\bbR^n$ is called {\it primitive} if $u$ cannot be described as $u=ku'$ for another integral vector $u'$ and $k\in\bbZ$ with $|k|>1$. } and they form a $\bbZ$-basis of $\bbZ^n$. 
Consider the standard Hamiltonian action of the $N$-dimensional torus $T^N$ on $\bbC^N$ with the moment map  
\[
\tilde\mu:\bbC^N\to (\bbR^N)^*={\rm Lie}(T^N)^*, \ (z_1, \ldots, z_N)\mapsto  -(|z_1|^2, \ldots, |z_N|^2)+(\lambda^{(1)}, \ldots, \lambda^{(N)}). 
\]
Let $\tilde\pi:\bbR^N\to \bbR^n$ be the linear map defined by $e_r\mapsto\nu^{(r)}$, where $e_r$ ($r=1,\ldots, N$) is the 
$r$-th standard basis of $\bbR^N$. 
Note that $\tilde\pi$ induces a surjection $\tilde\pi:\bbZ^N\to\bbZ^n$ between the standard lattices by the 
last condition in Definition~\ref{def:delzantpoly}, and hence it induces surjective homomorphism between tori,  still denoted by $\tilde\pi$, 
\[
\tilde\pi:T^N=\bbR^N/\bbZ^N\to T^n=\bbR^n/\bbZ^n. 
\]
Let $H$ be the kernel of $\tilde\pi$ which is an $(N-n)$-dimensional subtorus of $T^N$ and 
$\frakh$ its Lie algebra.  
We have exact sequences 
\[
1\to  H\stackrel{\iota}{\to} T^N\stackrel{\tilde\pi}{\to} T^n\to 1,
\]
\[
0 \to  \frakh\stackrel{\iota}{\to} \bbR^N\stackrel{\tilde\pi}{\to} \bbR^n\to 0
\]and its dual 
\[
0 \to (\bbR^n)^* \stackrel{\tilde\pi^*}{\to} (\bbR^N)^*\stackrel{\iota^*}{\to}  \frakh^*\to 0, 
\]where $\iota$ is the inclusion map. 
Then the composition $\iota^*\circ\tilde\mu:\bbC^N\to \frakh^*$ is the associated moment map 
of the action of $H$ on $\bbC^N$. 
It is known that $(\iota^*\circ\tilde\mu)^{-1}(0)$ is a compact submanifold of $\bbC^N$ and 
$H$ acts freely on it.  
We obtain the desired symplectic manifold $M_P:=(\iota^*\circ\tilde\mu)^{-1}(0)/H$ equipped with 
a natural Hamiltonian $T^N/H=T^n$-action. 
Note that the standard flat K\"ahler structure on $\bbC^N$ induces a K\"ahler structure on $M_P$. 
The associated Riemannian metric is called the {\it Guillemin metric}. 

\begin{Rem}
In the above set-up we assume that the number of facets of $P$, say $N$,  is equal to that of the defining inequalities,  
though, it is possible to consider the similar construction formally for any system of inequalities 
which has more than $N$ inequalities. 
Such a construction may produce a symplectic toric manifold 
equipped with metric which is not isometric to the Guillemin metric. 
\end{Rem}

There exists an explicit description of the Guillemin metric.  
We give the description following \cite{Aberu}. 
Consider a smooth function 
\begin{equation}\label{g_P}
g_P:=\frac{1}{2}\sum_{r=1}^Nl^{(r)}\log l^{(r)} : P^{\circ}\to \bbR, 
\end{equation}
where $P^{\circ}$ is the interior of $P$. 
It is known that $M_P^{\circ}:=\mu_P^{-1}(P^{\circ})$  is an open dense subset of $M_P$ on which $T^n$ acts freely and there exists a diffeomorphism $M_P^{\circ}\cong P^{\circ}\times T^n$. 
Under this identification $\omega_P|_{M_P^{\circ}}$ can be described as 
\[
\omega_P|_{M_P^{\circ}}=dx\wedge dy=\sum_{i=1}^ndx_i\wedge dy_i
\]using the standard coordinate\footnote{Here we regard $T=T^n=(S^1)^n$ and $S^1=\bbR/\bbZ$. } $(x,y)=(x_1, \ldots, x_n, y_1, \ldots, y_n)\in P^{\circ}\times T^n$. The coordinate on $M_P^{\circ}$ induced from $(x,y)\in P^{\circ}\times T^n$ is called the {\it symplectic coordinate} on $M_{P}$. 

\begin{thm}[\cite{Guillemin}]\label{Guillemins metric}
Under the symplectic coordinates $(x,y)\in P^{\circ}\times T^n\cong M_P^{\circ}\subset M_P$, the Guillemin metric can be described as 
\[
\begin{pmatrix}
G_P & 0 \\
0 & G_P^{-1}
\end{pmatrix}, 
\]where $\displaystyle G_P:={\rm Hess}_x(g_P)=\left(\frac{\partial^2 g_P}{\partial x_k\partial x_l}\right)_{k,l=1,\ldots, n}$ is the Hessian of $g_P$. 
\end{thm}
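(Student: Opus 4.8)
The plan is to reduce the statement to the identification of a single convex function -- the \emph{symplectic potential} -- and then to compute that function explicitly from the Delzant construction. I would proceed in three stages. First I would record the general shape of a torus-invariant K\"ahler metric on $M_P^{\circ}\cong P^{\circ}\times T^n$ in the symplectic coordinates $(x,y)$. Since the residual $T^n$-action is by translation in the $y$-variable and preserves both $\omega_P$ and the complex structure $J$ induced from $\bbC^N$, the metric $\gamma(\cdot,\cdot)=\omega_P(\cdot,J\cdot)$ has coefficients depending on $x$ alone. Writing $\omega_P|_{M_P^{\circ}}=\sum_i dx_i\wedge dy_i$ and using $\omega_P$-compatibility together with $J^2=-\mathrm{id}$, one checks that $J$ must be block anti-diagonal and $\gamma$ block-diagonal of the form
\[
\begin{pmatrix} G & 0 \\ 0 & G^{-1}\end{pmatrix}
\]
for a symmetric positive-definite matrix $G=G(x)$, and that integrability of $J$ forces $G=\mathrm{Hess}_x(g)$ for some convex function $g$ on $P^{\circ}$. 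This is the standard toric K\"ahler computation in the framework of \cite{Aberu}; granting it, the theorem becomes equivalent to the assertion $g=g_P$ up to an affine function of $x$ (affine terms are annihilated by $\mathrm{Hess}_x$, so only the Hessian must match).

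Second, I would locate the action variables under the reduction. On the zero level set $(\iota^*\circ\tilde\mu)^{-1}(0)$, exactness of $0\to(\bbR^n)^*\xrightarrow{\tilde\pi^*}(\bbR^N)^*\xrightarrow{\iota^*}\frakh^*\to 0$ shows that $\tilde\mu=\tilde\pi^*(x)$ for a unique $x$, which is precisely the $T^n$-moment coordinate of the image point in $P$. Reading off the $r$-th component and using $(\tilde\pi^*x)_r=\langle x,\nu^{(r)}\rangle$ together with $\tilde\mu_r=-|z_r|^2+\lambda^{(r)}$ gives $|z_r|^2=l^{(r)}(x)$ for every $r$, after matching the sign convention of the moment map so that its image is $P$. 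Thus, on $M_P^{\circ}$, the $N$ ambient action variables $|z_1|^2,\dots,|z_N|^2$ of $\bbC^N$ become the affine functions $l^{(1)}(x),\dots,l^{(N)}(x)$ of the reduced moment coordinate $x$.

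Third, I would transport the symplectic potential through the reduction. In the action--angle description of flat $\bbC^N$, the Euclidean K\"ahler metric has separable symplectic potential $\tfrac12\sum_r\xi_r\log\xi_r$ in the action variables $\xi_r=|z_r|^2$ (for a single factor $\bbC$ the potential is $\tfrac12\,\xi\log\xi$ up to affine terms, as a direct computation in polar coordinates shows). The crux is to show that the symplectic potential of the K\"ahler quotient is obtained by restricting the ambient potential to the affine slice $\xi=(l^{(r)}(x))_r$ found in the previous step; substituting then yields exactly
\[
g_P(x)=\frac12\sum_{r=1}^N l^{(r)}(x)\log l^{(r)}(x),
\]
whence $G_P=\mathrm{Hess}_x(g_P)$ and the claimed block form, as asserted in \cite{Guillemin}.

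I expect the main obstacle to be precisely this last compatibility: justifying that forming the K\"ahler reduction commutes with passing to symplectic potentials, i.e. that the reduced potential is the honest restriction of $\tfrac12\sum_r\xi_r\log\xi_r$ along the affine embedding $x\mapsto(l^{(r)}(x))_r$. The clean way to handle it is through the Legendre duality between the symplectic potential $g$ and the K\"ahler potential $f$, where $u_j=\partial g/\partial x_j$ are the real parts of the logarithmic complex coordinates $\zeta_j=u_j+iy_j$ and $g(x)+f(u)=\langle x,u\rangle$: one tracks how the flat K\"ahler potential $\tfrac12\sum_r|z_r|^2$ on $\bbC^N$ and its complex coordinates descend through the complexified projection $(\bbC^*)^N\to(\bbC^*)^n$ and the constraint, and verifies that the induced complex structure on $M_P^{\circ}$ is the one whose Legendre-dual potential is $g_P$. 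Getting the conventions, signs, and the nonzero moment level consistent across the two coordinate systems is where the real care is needed; everything else is the standard toric dictionary.
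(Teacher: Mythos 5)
The paper does not prove this theorem: it is quoted from \cite{Guillemin} (see also \cite{Aberu}), so there is no internal argument to compare against. Judged on its own, your outline is the correct and standard route --- it is essentially Guillemin's original proof as repackaged by Abreu: (i) the general normal form $\mathrm{diag}(G,G^{-1})$ with $G=\mathrm{Hess}_x(g)$ for invariant compatible complex structures in action--angle coordinates, (ii) the identification $|z_r|^2=l^{(r)}(x)$ on the zero level set via exactness of $0\to(\bbR^n)^*\to(\bbR^N)^*\to\frakh^*\to 0$, and (iii) descent of the flat potential $\tfrac12\sum_r\xi_r\log\xi_r$ through the reduction. Your flag about sign conventions in step (ii) is warranted: with the paper's $\tilde\mu=-(|z_r|^2)_r+(\lambda^{(r)})_r$ and inward normals $\nu^{(r)}$ one literally gets $|z_r|^2=-l^{(r)}(x)$, so a sign must be adjusted somewhere for the moment image to be $P$.

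The one place where the proposal stops short of a proof is exactly where you say it does: the assertion that the symplectic potential of the K\"ahler quotient is the restriction of the ambient potential along $x\mapsto(l^{(r)}(x))_r$ is not a formal consequence of anything in stages (i)--(ii); it \emph{is} the theorem. As written, your argument reduces the statement to its own hardest step. To close it along the lines you indicate: the Legendre-dual coordinates $u_j=\partial g/\partial x_j$ are characterized by $u_j+iy_j$ being holomorphic for the reduced complex structure; since $M_P^{\circ}$ is the holomorphic quotient of $(\bbC^*)^N$ by $H_{\bbC}$, the functions $\tfrac12\log|z_r|^2+i\arg z_r$ descend through $\tilde\pi$ to give $u_j=\tfrac12\sum_r\nu_j^{(r)}\log|z_r|^2=\tfrac12\sum_r\nu_j^{(r)}\log l^{(r)}(x)$, which agrees with $\partial g_P/\partial x_j=\tfrac12\sum_r\nu_j^{(r)}\bigl(\log l^{(r)}(x)+1\bigr)$ up to the constant $\tfrac12\sum_r\nu_j^{(r)}$ --- an affine discrepancy killed by the Hessian. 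Writing out that half page would turn your roadmap into a complete proof; without it, the crux is asserted rather than established.
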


\begin{Rem}\label{rem:weakisom->isometric}
If $P$ and $P'$ in $\calD_n$ are $G_n$-congruent, 
then the corresponding Riemannian manifolds $M_P$ and $M_{P'}$ are isometric to each other. 
In fact as it is noted in \cite[Section~3.3]{Aberu}, for $\varphi\in G_n$ we have an isomorphism between 
$M_P$ and $M_{\varphi(P)}$ as K\"ahler manifolds. The isomorphism is induced by the 
map $P\times T\to \varphi(P)\times T$, $(x,t)\mapsto (\varphi(x), ((\varphi_*)^{-1})^{T}(t))$, where 
$(\ )^{T}$ is the transpose and 
$\varphi_*$ is the automorphism of $T$ which is induced by $\varphi$. 
\end{Rem}

\section{Convergence of polytopes and symplectic toric manifolds}
\label{Convergence of polytopes}
Hereafter we do not often distinguish a sequence itself and a subsequence of it. 
\subsection{Convergence of polytopes and related quantities}
For a polytope $P$ in $\bbR^n$ we denote the set of all $k$-dimensional faces of $P$ by $\{F_{k}^{(r)}(P)\}_r$. In particular we denote the set of all facets by $\{F^{(r)}(P)\}_r$.  
We often omit the superscript $r$ for simplicity and denote each face by $F_k(P)$ for example. 

\begin{prop}\label{FtoF}
For a sequence $\{P_i\}_i\subset \calD_n$ suppose that $d^H(P_i, P)\to 0 \ (i\to \infty)$ for $P\in\calD_n$. Then for any $x\in F(P)$ there exists a sequence  $\{x_i\in F(P_i)\}_i$ such that $x_i\to x \ (i\to\infty)$. 
\end{prop}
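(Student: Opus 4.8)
The plan is to realize the approximating points $x_i$ as \emph{radial exit points}. Fix once and for all an interior point $p\in P^{\circ}$, set $u:=x-p$, and for each large $i$ let $x_i:=p+t_iu$ where $t_i:=\max\{t\geq 0 \mid p+tu\in P_i\}$. Since $P_i$ is compact and convex with $p$ in its interior, this maximum is attained and $x_i\in\partial P_i$; because the boundary of a polytope is the union of its facets, we then automatically have $x_i\in F(P_i)$ for some facet. As $x=p+u$ corresponds to $t=1$, and $\|x_i-x\|=|t_i-1|\,\|u\|$, the entire statement reduces to proving $t_i\to 1$.

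Before this even makes sense I must know that the ray is well defined for large $i$, i.e.\ that $p\in P_i^{\circ}$ eventually. This rests on the single technical ingredient of the argument, which I would isolate as follows: if $q\in P^{\circ}$ with $\overline{B(q,\delta)}\subset P$, then $q\in P_i$ whenever $d^H(P_i,P)<\delta$. To see this, suppose $q\notin P_i$ and separate $q$ from the convex body $P_i$ by a hyperplane with unit normal $\nu$, so that $\langle y,\nu\rangle\leq c$ for all $y\in P_i$ while $\langle q,\nu\rangle\geq c$. Then $q+\delta\nu\in\overline{B(q,\delta)}\subset P$, yet $\mathrm{dist}(q+\delta\nu,P_i)\geq\langle q+\delta\nu,\nu\rangle-c\geq\delta$, contradicting $P\subset B(P_i,\delta)$. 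Applying this to every point of a small ball around $p$ shows $p\in P_i^{\circ}$ for all large $i$.

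With this in hand the two inequalities are short. For $\liminf_i t_i\geq 1$: for any $t<1$ the point $p+tu$ lies in $P^{\circ}$, so the ingredient above gives $p+tu\in P_i$ for all large $i$, whence $t_i\geq t$; letting $t\uparrow 1$ yields $\liminf_i t_i\geq 1$. For $\limsup_i t_i\leq 1$: if $t>1$ then $p+tu\notin P$, and since $P$ is closed and convex with $p$ interior we have $\mathrm{dist}(p+tu,P)>0$; because $x_i=p+t_iu\in P_i\subset B(P,\epsilon)$ for $i$ large and $\epsilon$ small, the values $t_i$ cannot accumulate at any number exceeding $1$. Combining the two bounds gives $t_i\to 1$, hence $x_i\to x$, completing the argument.

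I expect the only genuine obstacle to be the isolated ingredient above, namely the passage from the ``soft'' information $P\subset B(P_i,\delta)$ to the ``hard'' conclusion $q\in P_i$; the separation-plus-convexity argument is precisely what converts neighbourhood-closeness into actual membership, and it is also what guarantees $p\in P_i^{\circ}$. The remaining steps—defining the exit points and estimating $t_i$ from above and below—are routine consequences of convexity and of the hypothesis $d^H(P_i,P)\to 0$. Note that neither the Delzant nor even the polytope hypothesis is essential for $x_i\to x$; the polytope structure is used only at the very end, to conclude that the boundary point $x_i$ lies on a facet $F(P_i)$.
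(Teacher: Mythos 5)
Your proof is correct, and it takes a genuinely different route from the paper's. The paper argues by contradiction and locally at $x$: assuming $\mathrm{dist}(x,\partial P_i)$ stays bounded away from $0$, it takes interior points $y_i\in P_i^{\circ}$ converging to $x$, pushes them by $-2\epsilon\nu$ along the outward normal of the facet containing $x$, and obtains a point $z=x-2\epsilon\nu$ that lies in $\lim_i P_i=P$ while $B(z,\epsilon)\subset P^c$, contradicting $d^H(P_i,P)\to 0$; the approximating points $x_i$ are then just nearest points of $\partial P_i$. You instead give a direct construction: the radial exit points $p+t_iu$ from a fixed interior point $p$, reducing everything to $t_i\to 1$, with the separation lemma (``if $\overline{B(q,\delta)}\subset P$ and $d^H(P_i,P)<\delta$ then $q\in P_i$'') doing the real work of converting Hausdorff closeness into membership. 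Both arguments are elementary and both really only use convexity until the final observation that $\partial P_i$ is a union of facets (the paper's use of the facet normal is just a supporting hyperplane at $x$, which any convex body has). Your version buys a canonical, explicitly defined approximating point (the radial projection of $x$ onto $\partial P_i$, which varies continuously in $x$ for fixed $i$) and a reusable quantitative lemma about $\delta$-interiors of Hausdorff-close convex bodies; the paper's version avoids choosing a base point and is slightly shorter. Two cosmetic points you may as well acknowledge: $x_i$ is only defined once $p\in P_i^{\circ}$, i.e.\ for large $i$ (harmless, since finitely many terms are irrelevant), and the step $\limsup_i t_i\le 1$ tacitly uses that the $t_i$ are bounded, which follows from $P_i\subset B(P,\epsilon_i)$ being uniformly bounded.
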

\begin{proof}
For $x\in F(P)$ suppose that 
\[
\limsup_{i\to\infty}{\rm dist}(x, \partial P_i)>10\epsilon
\]for some $\epsilon>0$. 
We may assume that 
\[
B(x, 9\epsilon)\cap \partial P_i =\emptyset
\]for any $i$ by taking a subsequence. 
By the above assumption and $P_i\to P$ in $d^H$ there exists a sequence $\{y_i\in P_i^{\circ}\}_i$ such that $y_i\to x$. For any $i$ large enough we may assume that $\|x-y_i\|<\epsilon$. Then we have 
\[
{\rm dist}(y_i, \partial P_i)\geq {\rm dist}(x, \partial P_i)-\|x-y_i\|\geq8\epsilon, 
\]and hence, 
\[
B(y_i,8\epsilon)\cap \partial P_i=\emptyset. 
\]In particular we have $B(y_i,8\epsilon)\subset P_i^{\circ}$. Let $\nu$ be an inward unit normal vector of $F(P)$ and put 
\[
z_i:=y_i-2\epsilon \nu. 
\]We have $z_i\in B(y_i, 3\epsilon)\subset P_i^{\circ}$ and it converges to 
$z:=x-2\epsilon\nu$. On the other hand one can see that $B(z, \epsilon)\subset P^c$ and $\displaystyle z\in \lim_{i\to\infty}P_i=P$. It contradicts to $d^H(P_i,P)\to 0 \ (i\to\infty)$.  
\end{proof}

\begin{cor}\label{lowdimconv}
As in the same setting in Proposition~\ref{FtoF} for any $k=0,1,\ldots, n-1$ and a point $x\in F_{k}(P)$ there exists a sequence $\{x_i\in F_{k}(P_i)\}_i$ such that $x_i  \to x \ (i\to\infty)$.   
\end{cor}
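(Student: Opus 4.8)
The plan is to reduce the statement to the case of vertices ($k=0$), which can be treated directly by support functions, and then to pass from vertices to general $k$-faces by slicing with a transversal affine subspace. First I would reduce to the case where $x$ lies in the relative interior of $F_k(P)$: an arbitrary point of $F_k(P)$ lies in the relative interior of a unique face $F_{k'}(P)$ with $k'\le k$, and since every $k'$-face of $P_i$ is contained in some $k$-face of $P_i$ (one can climb the face lattice one dimension at a time, as $k\le n-1$), it suffices to approximate $x$ by points lying on $k'$-faces of $P_i$. So from now on fix $x$ in the relative interior of $F:=F_k(P)$.

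The engine is the vertex case. If convex bodies $Q_i\to Q$ in $d^H$ and $v$ is a vertex (exposed point) of $Q$, I would choose $u$ in the interior of the outward normal cone of $Q$ at $v$, so that $v$ is the \emph{unique} maximizer of $\langle\,\cdot\,,u\rangle$ on $Q$. Since the support function is $1$-Lipschitz in $d^H$, namely $|\max_{Q_i}\langle\,\cdot\,,u\rangle-\max_{Q}\langle\,\cdot\,,u\rangle|\le d^H(Q_i,Q)\,\|u\|$, any maximizer $v_i\in\operatorname{argmax}_{Q_i}\langle\,\cdot\,,u\rangle$ has every subsequential limit equal to $v$ by uniqueness; choosing $v_i$ to be a vertex of the maximizing face of $Q_i$ yields a sequence of vertices of $Q_i$ with $v_i\to v$. (This is in the spirit of Proposition~\ref{FtoF}, but simpler, and it does not require the bodies to be Delzant.)

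To apply this I would slice. Let $W$ be the $(n-k)$-dimensional span of the edge directions of $P$ at $x$ transversal to $F$, and set $N:=x+W$; then $N$ meets the affine hull of $F$ only at $x$, so $N\cap F=\{x\}$, locally $P\cap N=x+C$ for the simplicial tangent cone $C\subset W$, hence $x$ is a vertex of the slice $P\cap N$ and $N\cap\operatorname{int}P\ne\emptyset$. Two facts then glue the argument. (i) \emph{Continuity of slicing}: since $N$ meets $\operatorname{int}P$, $d^H(P_i,P)\to 0$ forces $P_i\cap N\to P\cap N$ in the Hausdorff distance inside $N$, so the vertex case applied to $Q:=P\cap N$, $Q_i:=P_i\cap N$, $v:=x$ produces vertices $x_i$ of $P_i\cap N$ with $x_i\to x$. (ii) \emph{Transversality}: every face of $P_i\cap N$ has the form (face of $P_i$)$\cap N$, and for $N$ in general position a $0$-dimensional such face is cut transversally from a face $G$ of $P_i$ with $\dim G+\dim N-n=0$, i.e. $\dim G=k$, the cut point lying in the relative interior of $G$. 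Since the bad positions of $N$ form a Lebesgue-null set for each $P_i$ and there are only countably many $i$, I can choose a single $N$ (arbitrarily close to the orthogonal normal slice, so that $x$ stays a vertex of $P\cap N$) that is simultaneously generic for all $P_i$. Then each $x_i$ lies on a $k$-face of $P_i$ and $x_i\to x$, as required.

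The main obstacle is precisely what forces this detour through slicing: one cannot expect a single sequence of $k$-faces of $P_i$ to converge to $F_k(P)$. Even among Delzant polytopes a flat facet may be approximated by several facets whose primitive integral normals tend to the normal of $F$ and which meet at a vertex converging into the relative interior of $F$; thus the faces genuinely subdivide in the limit, and any naive induction on codimension that tries to track a convergent face breaks down (there need be no convergent face at all). Slicing sidesteps this because a vertex of the slice is robust: it survives subdivision and is detected by the support-function argument. The points that need the most care are the transversality bookkeeping in (ii) and the slicing continuity in (i), both of which rely on $N$ meeting $\operatorname{int}P$ — guaranteed here since $x\in\partial P$ and $N$ is transversal to $F$.
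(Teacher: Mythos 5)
Your argument is correct, but it takes a genuinely different route from the paper. The paper proves the corollary by descending induction on the codimension: a point of an $(n-2)$-face lies on a facet $F(P)$, Proposition~\ref{FtoF} exhibits $F(P)$ as a Hausdorff limit of a \emph{union} of facets of $P_i$, and the argument of Proposition~\ref{FtoF} is then rerun on the boundary structure of those facets; the write-up there is only a sketch. Your proof instead reduces to a single robust special case --- a vertex of a convex polytope, detected by a support-function/unique-maximizer argument --- and reaches it by slicing $P$ and the $P_i$ with a transversal affine $(n-k)$-plane through $x$, using continuity of sections ($N\cap\operatorname{int}P\neq\emptyset$) and the identification of faces of $P_i\cap N$ with sets $G\cap N$ for faces $G$ of $P_i$. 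What this buys you: the argument is self-contained, makes no use of the Delzant (or even rationality) hypotheses, and, as you observe, sidesteps the genuine difficulty that faces may subdivide in the limit, so that no single sequence of $k$-faces of $P_i$ need converge to $F_k(P)$ --- a difficulty the paper's inductive sketch does not really address. What the paper's route buys is brevity, since Proposition~\ref{FtoF} is already in hand.

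One step of your write-up deserves a correction, though it does not affect the conclusion. The genericity argument in (ii) is both unnecessary and slightly shaky as stated: the ``bad'' positions of $N$ are constrained to pass through the fixed point $x$, and if $x$ happens to lie on the affine hull of some face of some $P_i$ it is not obvious that the bad set is null. But you do not need genericity at all. If $x_i$ is a vertex of $P_i\cap N$ and $G_i$ is the face of $P_i$ with $x_i\in\operatorname{relint}(G_i)$, then $G_i\cap N=\{x_i\}$ forces $\operatorname{aff}(G_i)\cap N=\{x_i\}$ (a relative neighborhood of $x_i$ in $\operatorname{aff}(G_i)$ lies in $G_i$), and two affine subspaces meeting in a single point satisfy $\dim G_i+\dim N\leq n$, i.e.\ $\dim G_i\leq k$ automatically. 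Climbing the face lattice, exactly as in your reduction step, then places $x_i$ on a $k$-face of $P_i$ for every choice of transversal $N$.
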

\begin{proof}
For any $x\in F_{n-2}(P)$ let $F(P)$ be a facet of $P$ which contains $x\in F_{n-2}(P)$. By Proposition~\ref{FtoF} $F(P)$ can be described as a limit of a union of facets of $F(P_i)$. The proof of \ref{FtoF} shows that $F_{n-2}(P)$ can be described as a limit of $(n-2)$-dimensional faces of $F(P_i)$. One can prove the claim in an inductive way. 
\end{proof}

\begin{cor}\label{cor:lscfacets}
As in the same setting in Proposition~\ref{FtoF} the number of $k$-dimensional faces is lower semi-continuous for any $k$:  
\[
{\#}\{F^{(r)}_k(P)\}_r\leq\lim_{i\to\infty} ({\#}\{F^{(r)}_k(P_i)\}_r). 
\]
\end{cor}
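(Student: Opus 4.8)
The plan is to lift the $k$-faces of $P$ to $k$-faces of $P_i$ in a way that keeps distinct faces distinct. Write $G_1,\dots,G_m$ for the $k$-faces of $P$, where $m=\#\{F_k^{(r)}(P)\}_r$. Since $P\in\calD_n$ is simple, each $G_s$ is the intersection of exactly $n-k$ facets of $P$, say $G_s=\bigcap_{j=1}^{n-k}F^{(r_j)}(P)$, and the corresponding inward primitive normals are linearly independent, so that $\mathrm{aff}(G_s)$ is the $k$-dimensional affine subspace $A_s:=\bigcap_j H^{(r_j)}$ cut out by the facet hyperplanes. I would first record two elementary facts about a single polytope: (i) for any face $F$ one has $\mathrm{aff}(F)\cap P=F$, so distinct $k$-faces have distinct $k$-dimensional affine hulls; and (ii) any convex subset of $\partial P$ is contained in a single face (apply a supporting hyperplane at a relative interior point of that subset). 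Granting these, it suffices to produce, for all large $i$, an injective assignment $G_s\mapsto \tilde G_{s,i}$ into the set of $k$-faces of $P_i$, since then $\#\{F_k^{(r)}(P_i)\}_r\geq m$ and passing to the limit gives the claim.

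To build $\tilde G_{s,i}$ I would approximate the $n-k$ facets of $P$ through $G_s$ by facets of $P_i$. Fix $x_s\in\mathrm{relint}(G_s)$; it lies at positive distance from every facet of $P$ other than $F^{(r_1)}(P),\dots,F^{(r_{n-k})}(P)$, and for each $j$ it is a limit of relative interior points of $F^{(r_j)}(P)$, which are smooth points of $\partial P$ carrying the unique supporting hyperplane $H^{(r_j)}$. Using Proposition~\ref{FtoF} to lift such points, together with $\partial P_i\to\partial P$ and the continuity of supporting hyperplanes at smooth boundary points, I obtain for each $j$ a facet $F^{(r_j^i)}(P_i)$ of $P_i$ whose hyperplane $H^{(r_j^i)}$ converges to $H^{(r_j)}$. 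Because the limiting normals are independent and $P_i$ is simple, for large $i$ these $n-k$ facets meet along a single $k$-face $\tilde G_{s,i}:=\bigcap_j F^{(r_j^i)}(P_i)$, whose affine hull $\bigcap_j H^{(r_j^i)}$ converges to $A_s$. Injectivity is then immediate from fact (i): if $\tilde G_{s,i}=\tilde G_{t,i}$ held along a subsequence for $s\neq t$, then the limit of their affine hulls would give $A_s=A_t$, hence $\mathrm{aff}(G_s)=\mathrm{aff}(G_t)$ and $G_s=G_t$, a contradiction.

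The main obstacle is the construction step, not the counting. The difficulty is that we do not assume the number of facets is preserved, so $P_i$ may carry extra (small) facets near $x_s$, and an arbitrary lift of a relative interior point of $G_s$ (as provided directly by Corollary~\ref{lowdimconv}) may land on a $k$-face of $P_i$ whose affine hull does not converge to $A_s$. This is a genuine phenomenon: one sees it by approximating a flat facet of $P$ by a slightly creased pair of facets, which splits faces rather than preserving them, so the naive point-lifting need not be injective. The remedy, and the technical heart of the argument, is to show that near the smooth boundary point $x_s$ the boundary $\partial P_i$ is governed by exactly the $n-k$ supporting hyperplanes converging to the $H^{(r_j)}$, so that $\tilde G_{s,i}$ is well defined, genuinely $k$-dimensional, and converges to $G_s$ with controlled affine hull. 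Establishing this local convergence of normals at smooth points, together with the stability of the combinatorial adjacency of the $n-k$ facets, is exactly where the simplicity of Delzant polytopes enters and is the step I expect to require the most care.
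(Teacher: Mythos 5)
Your overall strategy---an injective assignment from the $k$-faces of $P$ into the $k$-faces of $P_i$, with injectivity certified by the distinctness of affine hulls---is reasonable, and you are right to flag that the naive lifting of single points via Corollary~\ref{lowdimconv} does not obviously give injectivity: a single $k$-face of $P_i$ can approach relative-interior points of two distinct $k$-faces of $P$ when those lie in a common higher-dimensional face (crease a facet of a cube along a segment joining two of its opposite edges). The paper itself states this corollary with no proof, treating it as immediate from Corollary~\ref{lowdimconv}, so there is no written argument to compare routes with; but your construction of the lift has a genuine gap.

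The gap is the assertion that, for large $i$, the $n-k$ facets $F^{(r_1^i)}(P_i),\dots,F^{(r_{n-k}^i)}(P_i)$ whose hyperplanes converge to those through $G_s$ actually \emph{meet} along a $k$-face. This corollary does not assume that the number of facets is preserved (that hypothesis enters only in Theorem~\ref{thm:primtoprimgen} and Theorem~\ref{thm:convGH}), so truncation is allowed, and then the approximating facets need not intersect at all. The paper's own Remark~\ref{ex:nonconv} already gives a counterexample: for the pentagons $P_i$ converging to a rectangle $P$, the vertex of $P$ at the cut corner is the intersection of two edges of $P$, while the two edges of $P_i$ approximating them are separated by the small fifth edge, so $\bigcap_{j}F^{(r_j^i)}(P_i)=\emptyset$ for every $i$; edge or face truncation produces the same failure for all $k<n-1$ in higher dimensions. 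For the same reason your proposed remedy is also false as stated: near $x_s$ the boundary $\partial P_i$ need not be governed by exactly $n-k$ hyperplanes, since arbitrarily many small extra facets of $P_i$ may accumulate there. What a correct argument needs is, for each $G_s$, a $k$-face of $P_i$ converging \emph{essentially} to a positive-$\calH^k$-measure subset of $G_s$ (the notion the paper introduces just after this corollary and uses in Theorem~\ref{primtoprim}); injectivity then follows as you intend, because distinct $k$-faces of $P$ overlap in $\calH^k$-measure zero, or equivalently have distinct affine hulls. Producing such an essentially convergent face---rather than the possibly empty intersection of lifted facets---is the missing step, and as written your assignment $G_s\mapsto\tilde G_{s,i}$ is not defined.
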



\begin{cor}\label{ltol}
Consider the same setting in Proposition~\ref{FtoF}. For any facet $F^{(r)}(P)$, its normal vector $\nu^{(r)}$ and a scalar $\lambda^{(r)}$ there exists a sequence of facet $F^{(r_i)}(P_{i})$ such that the corresponding defining affine functions converges to that of $F(P)$, i.e., $l_{i}^{(r_i)}\to l^{(r)} \ (i\to\infty)$.
\end{cor}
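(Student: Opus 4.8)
The plan is to reduce the convergence $l_i^{(r_i)}\to l^{(r)}$ to a purely integral statement and then to use the Delzant (unimodularity) condition to turn convergence of normal directions into exact equality of normals. Fix the facet $F=F^{(r)}(P)$, with primitive inward normal $\nu=\nu^{(r)}$, so that $l^{(r)}(x)=\langle x,\nu\rangle-\lambda^{(r)}$. Writing a putative approximating facet as $l_i^{(r_i)}(x)=\langle x,\nu_i\rangle-\lambda_i$ with $\nu_i$ primitive integral, the convergence $l_i^{(r_i)}\to l^{(r)}$ is equivalent to $\nu_i\to\nu$ and $\lambda_i\to\lambda^{(r)}$; since distinct integral vectors are at distance $\ge 1$, the first condition is the same as $\nu_i=\nu$ for all large $i$. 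Thus it suffices to show that for large $i$ the polytope $P_i$ has a facet whose inward normal is exactly $\nu$ and whose offset tends to $\lambda^{(r)}$.

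First I would dispose of the offset and identify the candidate facet. Because $d^H(P_i,P)\to 0$, the support numbers converge, $\lambda_i^\nu:=\min_{x\in P_i}\langle x,\nu\rangle\to\min_{x\in P}\langle x,\nu\rangle=\lambda^{(r)}$, so the offset is automatic. Let $F_i:=P_i\cap\{x : \langle x,\nu\rangle=\lambda_i^\nu\}$ be the $\nu$-minimal face of $P_i$. A short upper semicontinuity argument (any limit of points of $F_i$ lies in $P$ and satisfies $\langle\cdot,\nu\rangle=\lambda^{(r)}$, hence lies in $F$) shows that $F_i$ accumulates only on $F$. The entire content of the corollary is then that $F_i$ is a genuine facet, i.e.\ $\dim F_i=n-1$, for all large $i$: once this holds, $F_i$ spans the hyperplane $\{\langle\cdot,\nu\rangle=\lambda_i^\nu\}$ with $P_i$ on the side $\langle\cdot,\nu\rangle\ge\lambda_i^\nu$, so its recorded primitive inward normal is exactly $\nu$ and its offset is $\lambda_i^\nu\to\lambda^{(r)}$.

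To prove $\dim F_i=n-1$ I would invoke the Delzant condition, which is genuinely necessary here: for tilted, non-integral approximations (for instance squares rotated by angles $\theta_i\to 0$) the face $F_i$ can remain a single vertex forever, so integrality must enter. Choose a vertex $v$ of $P$ lying on $F$; at $v$ the $n$ primitive inward normals form a $\bbZ$-basis, one of them being $\nu$. By Corollary~\ref{lowdimconv} there are vertices $v_i$ of $P_i$ with $v_i\to v$. The crux is that the $n$ primitive inward normals $\nu_i^1,\dots,\nu_i^n$ at $v_i$ stabilize to those at $v$. Convergence of their directions would come from convergence of the normal cone of $P_i$ at $v_i$ to that of $P$ at $v$; convergence of their lengths then follows from the unimodularity identity $|\det(\nu_i^1,\dots,\nu_i^n)|=1$, for the normalized frame tends to a linearly independent limit, so the product of the lengths stays bounded while each length is $\ge 1$, forcing each length to be bounded. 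Integral vectors with convergent directions and bounded lengths are eventually constant, and primitivity pins them to the normals at $v$; in particular $\nu$ is realized as a facet normal of $P_i$ through $v_i$.

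The main obstacle is precisely this convergence of the local normal structure at $v_i\to v$, and it is delicate because the number of facets is only lower semicontinuous (Corollary~\ref{cor:lscfacets}): near $v$ the polytope $P_i$ may grow extra small facets (corner truncations), so the normal cone at the particular vertex $v_i$ furnished by Corollary~\ref{lowdimconv} need not converge to that at $v$. To circumvent this I would argue with the $\nu$-minimal face rather than a fixed vertex, proving instead that the ray $\bbR_{\ge 0}\,\nu$ is a ray of the normal fan of $P_i$ for large $i$. This I would deduce from two ingredients: that the outer normals along $\partial P_i$ cover directions on both sides of $-\nu/\|\nu\|$ (since $P_i\to P$ and $F$ is a bona fide facet), and that the fan of $P_i$ is smooth, so that two adjacent primitive ray generators cannot straddle the lattice ray $\bbR_{\ge 0}\,\nu$ unless one of them equals $\nu$. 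For $n=2$ this is the Stern--Brocot/Farey unimodularity obstruction and is transparent; establishing its higher-dimensional analogue for smooth fans is the step I expect to require the most care.
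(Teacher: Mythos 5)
You have set out to prove a strictly stronger statement than the corollary asserts, namely that the \emph{primitive integral} normal of some facet of $P_i$ eventually equals $\nu^{(r)}$. In the paper this stabilization of primitive normals is not the content of Corollary~\ref{ltol}; it is the content of Theorem~\ref{primtoprim} (dimension $2$) and Theorem~\ref{thm:primtoprimgen} (general dimension, and there only under the \emph{additional} hypothesis that the number of facets of $P_i$ converges to that of $P$). Corollary~\ref{ltol} is the much weaker statement that the defining affine functions converge when the normals are taken with a continuous normalization: the paper's proof simply takes, via Proposition~\ref{FtoF}, a sequence of facets $F^{(r_i)}(P_i)$ converging to $F^{(r)}(P)$, passes to a subsequence so that their \emph{unit} normal vectors converge (necessarily to the unit normal of $F^{(r)}(P)$), and concludes that $l_i^{(r_i)}\to l^{(r)}$. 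Your first two paragraphs (convergence of support numbers, upper semicontinuity of the $\nu$-minimal face, and the reduction ``integral $+$ convergent direction $+$ bounded length $\Rightarrow$ eventually constant'') are correct and are in fact the same mechanism the paper uses later in Theorems~\ref{primtoprim} and~\ref{thm:primtoprimgen}; but they are aimed at the wrong target for this corollary.

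For the statement you actually attack, there is a genuine gap exactly where you flag it: everything reduces to showing that the $\nu$-minimal face of $P_i$ is a facet (equivalently, that $\bbR_{\geq 0}\,\nu$ is a ray of the normal fan of $P_i$) for large $i$, and this is left unproved in dimension $n\geq 3$. Your first attempt via a vertex $v_i\to v$ is, as you note, defeated by corner truncations, since the number of facets is only lower semicontinuous (Corollary~\ref{cor:lscfacets}); your fallback, a higher-dimensional Stern--Brocot/unimodularity obstruction for smooth complete fans whose support-function data converges, is precisely the hard step, and the paper itself only obtains the corresponding conclusion by \emph{assuming} $\#\{F^{(r)}(P_i)\}_r\to \#\{F^{(r)}(P)\}_r$ in Theorem~\ref{thm:primtoprimgen} (even the $2$-dimensional case, Theorem~\ref{primtoprim}, takes a full page of determinant and parallelism arguments). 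So as written the proposal does not prove the corollary as stated (it never produces the convergence of the affine functions in the normalization the paper intends without first resolving the facet question), and its proof of the stronger primitive-normal statement is incomplete at its central step.
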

\begin{proof}
By Proposition~\ref{FtoF}, for any facet $F^{(r)}(P)$ of $P$,  one can take a sequence of facets $\{F^{(r_i)}(P_i)\}_i$ of $P_i$ which converges to $F^{(r)}(P)$. We may assume that the sequence of unit normal vectors of $F^{(r_i)}(P_i)$ converges to that of $F^{(r)}(P)$. It implies that the corresponding defining affine functions $l_i^{(r_i)}$ converge to $l^{(r)}$. 
\end{proof}


We say a sequence of $k$-dimensional faces $\{F_k(P_i)\}_i$  of a sequence $\{P_i\}_i$ in $\calD_n$ {\it converges essentially to a $k$-dimensional face $F_k(P)$ of $P\in\calD_n$} if 
\[
\lim_{i\to\infty}\calH^k(F_k(P_i))>0
\] and 
\[
\lim_{i\to\infty} d^H(F_k(P_i), F)=0
\]for a closed subset $F$ of $F_k(P)$. 


Next we consider the $2$-dimensional case $\calD_2$. 

\begin{thm}\label{primtoprim}
For a sequence $\{P_i\}_i\subset \calD_2$ suppose that $d^H(P_i, P)\to 0 \ (i\to \infty)$ for some $P\in\calD_2$. 
For each facet $F^{(r)}(P)$ of $P$ and its primitive normal vector $\nu^{(r)}$, there exists a sequence of primitive normal vectors $\{\nu_i^{(r_i)}\}_i$ of $F^{(r_i)}(P_i)$ such that $\nu_i^{(r_i)}\to\nu^{(r)} \ (i\to \infty)$.
\end{thm}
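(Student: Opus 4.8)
The plan is to take the convergence of \emph{unit} normals, which is already furnished by Corollary~\ref{ltol}, and to upgrade it to convergence of the \emph{primitive} normals by excluding the only obstruction: a blow-up of the lengths $\|\nu_i^{(r_i)}\|$ while the directions still converge. First I would reduce to exactly this boundedness. By Corollary~\ref{ltol} there is a sequence of facets $F^{(r_i)}(P_i)$ whose defining affine functions converge to $l^{(r)}$, so their unit normals converge to $\nu^{(r)}/\|\nu^{(r)}\|$. Since each $\nu_i^{(r_i)}$ is a primitive integral vector whose direction tends to that of $\nu^{(r)}$, once $\{\|\nu_i^{(r_i)}\|\}_i$ is bounded the sequence takes only finitely many integral values, every subsequential limit is a primitive vector in the direction of $\nu^{(r)}$, hence equals $\nu^{(r)}$, and therefore $\nu_i^{(r_i)}\to\nu^{(r)}$. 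Thus the whole content is a length bound.

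The elementary arithmetic input is that at every vertex of a Delzant polygon the two primitive normals $\mu,\mu'$ of the incident edges span a unimodular cone, i.e. $|\det(\mu,\mu')|=1$ by the smoothness clause of Definition~\ref{def:delzantpoly}. I would pair this with two quantitative remarks. (i) If $\mu,\mu'$ span a unimodular cone whose interior contains the ray $\mathbb{R}_{>0}\nu^{(r)}$ and whose opening angle is less than $\pi/2$, then writing $\nu^{(r)}=a\mu+b\mu'$ with integers $a,b\ge 1$ and using $\langle\mu,\mu'\rangle>0$ gives $\langle\mu,\nu^{(r)}\rangle\ge\|\mu\|^2$, whence $\|\mu\|,\|\mu'\|\le\|\nu^{(r)}\|$. (ii) From $\|\mu\|\,\|\mu'\|\sin\angle(\mu,\mu')=|\det(\mu,\mu')|=1$, an angle bounded away from $0$ bounds the product of the lengths, whereas a vanishing angle forces that product to blow up.

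Concretely I would prove the sharper claim that for all large $i$ the polytope $P_i$ has an edge whose primitive normal is exactly $\nu^{(r)}$; taking this edge as $F^{(r_i)}(P_i)$ makes the desired sequence eventually constant. Consider the support face $\Phi_i$ of $P_i$ in the direction $\hat\nu:=\nu^{(r)}/\|\nu^{(r)}\|$. If $\Phi_i$ is an edge we are done, since its primitive normal is then $\nu^{(r)}$. Suppose instead $\Phi_i$ is a single vertex $w_i$; then $\hat\nu$ lies in the interior of the normal cone $\mathrm{cone}(\mu_i,\mu_i')$ of its two incident edges, with $|\det(\mu_i,\mu_i')|=1$. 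Passing to a subsequence with $w_i\to w_\infty$ and using upper semicontinuity of normal cones under $d^H$-convergence, I obtain $\hat\nu\in N(P,w_\infty)$. If $w_\infty$ lies in the relative interior of an edge of $P$, that edge must be $F^{(r)}(P)$ and $N(P,w_\infty)=\mathbb{R}_{\ge0}\hat\nu$ is a single ray, so the opening angle of $\mathrm{cone}(\mu_i,\mu_i')$ tends to $0$: by (i) the lengths stay bounded, by (ii) their product blows up, a contradiction. If instead $w_\infty$ is a vertex $v$ of $P$, then $\hat\nu$ is a boundary ray of the two-dimensional cone $N(P,v)$, and an interior direction of $\mathrm{cone}(\mu_i,\mu_i')$ can limit onto a boundary ray only if the corresponding generator $\mu_i$ is eventually $\nu^{(r)}$, contradicting that $\hat\nu$ was interior (so $\nu^{(r)}$ was not an edge normal). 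Both cases are impossible, so $\Phi_i$ is an edge.

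I expect the \textbf{main obstacle} to be exactly this last, geometric step: ruling out that near a vertex of $P$ the approximating $P_i$ develop a \emph{fan} of short edges (a unimodular corner subdivision) whose primitive normals run off to infinity. The point of the argument is that the Delzant condition, together with the fact that $F^{(r)}(P)$ is a genuine edge of positive length, forces the direction $\hat\nu$ itself to be realized by an honest edge of $P_i$, so that such a fan cannot absorb $\nu^{(r)}$; the unimodular estimates (i)--(ii) are what convert this into the length bound. The one place demanding genuine care is making the normal-cone convergence precise, most cleanly via uniform convergence of the support functions $h_{P_i}\to h_P$; the rest is the elementary geometry of unimodular cones.
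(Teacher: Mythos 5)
Your reduction to a length bound, items (i)--(ii), and Case A are all sound, and the architecture (realizing $\hat\nu$ as the direction of an actual support face of $P_i$ and playing the unimodular determinant against a collapsing opening angle) is a genuinely different, and arguably cleaner, route than the paper's, which instead walks along the chain of facets of $P_i$ between the neighbours of $F^{(r)}(P)$ and derives a contradiction from three facets acquiring the same primitive normal. The problem is Case B. The assertion that an interior direction can limit onto a boundary ray of $N(P,v)$ ``only if the corresponding generator $\mu_i$ is eventually $\nu^{(r)}$'' does not follow from anything you have established, and under the Case B hypotheses it is provably false. Indeed, writing $\nu^{(r)}=a_i\mu_i+b_i\mu_i'$ with integers $a_i,b_i\ge 1$ (possible because $\{\mu_i,\mu_i'\}$ is a $\bbZ$-basis and $\hat\nu$ is interior to the cone), one gets $b_i=|\det(\mu_i,\nu^{(r)})|=\|\mu_i\|\,\|\nu^{(r)}\|\sin\angle(\mu_i,\nu^{(r)})$, so the generator whose direction converges to $\hat\nu$ satisfies $\|\mu_i\|\ge \bigl(\|\nu^{(r)}\|\sin\angle(\mu_i,\nu^{(r)})\bigr)^{-1}\to\infty$; it is never equal to $\nu^{(r)}$ (nor could it be, since $\hat\nu$ is assumed interior). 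So in Case B you have not derived any contradiction, and since Case B is exactly the scenario where the minimizing vertex $w_i$ drifts to an endpoint of $F^{(r)}(P)$ while a fan of long normals accumulates on one side, it is precisely the ``main obstacle'' you flagged; the proof as written does not overcome it.

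Case B can in fact be excluded, but it needs an input you have not used: the edge $F^{(r)}(P)$ has positive length $L$, so (e.g.\ by weak convergence of surface area measures, or directly from $d^H$-convergence) $\partial P_i$ must contain, for every $\epsilon>0$ and large $i$, an arc of nearly $\hat\nu$-minimal points of horizontal extent close to $L$, extending on \emph{both} sides of points near both endpoints of $F^{(r)}(P)$. On the other hand, choosing integral coordinates with $\nu^{(r)}=(0,1)$, if no edge of $P_i$ has normal $(0,1)$ then the two edge directions $(c,d)$, $(c',d')$ at the lowest vertex $w_i$ satisfy $d\le-1\le 1\le d'$ and $cd'-dc'=\pm1$, which forces $c\le 0$ or $c'\le 0$; together with convexity this makes $P_i$ rise steeply on at least one side of $w_i$, so the near-minimal arc cannot have horizontal extent comparable to $L$ on that side. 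Some such quantitative argument (this is essentially where the paper's proof also does its real work, producing bounded normals on both sides of the unbounded one) must be supplied before Case B is closed; without it the proof is incomplete.
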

\begin{proof}
By Corollary~\ref{cor:lscfacets} and the semi-continuity of the Hausdorff measure in the non-collapsing limit we may assume that for each facet (=edge)  $F^{(r)}(P)$ there exists a sequence $\{F^{(r_i)}(P_i)\}_i$ of facets of $\{P_i\}_i$ which converges essentially to $F^{(r)}(P)$. 

 We rearrange the indices so that $r=r_i=1$ for all $i$. Moreover we may assume that the facets are numbered in a counterclockwise way. Note that by the smoothness condition the determinant of the $2\times 2$ matrix consisting of any adjacent primitive normal vectors is $\pm 1$. 

Since $\{F^{(1)}(P_i)\}_i$ converges essentially to $F^{(1)}(P)$ the sequence of inward unit normal vectors converges : 
\[
\frac{\nu^{(1)}_i}{\|\nu^{(1)}_i\|}\to \frac{\nu^{(1)}}{\|\nu^{(1)}\|} \quad (i\to \infty).
\] 
Since $\{\nu^{(1)}_i\}_i$ is a sequence of integral vectors it suffices to show that $\{\|\nu^{(1)}_i\|\}_i$ is a bounded sequence. 

Suppose that $\{\|\nu^{(1)}_i\|\}_i$ is unbounded. In this case note that 
\[
\left|\det\left(\frac{\nu^{(1)}_i}{\|\nu^{(1)}_i\|}, \frac{\nu^{(2)}_i}{\|\nu^{(2)}_i\|}\right)\right|=\frac{1}{\|\nu_i^{(1)}\|\|\nu_i^{(2)}\|}|\det(\nu_i^{(1)}, \nu_i^{(2)})|\leq \frac{1}{\|\nu_i^{(1)}\|}\to 0 \quad (i\to \infty). 
\]
It implies that the facets $F^{(1)}(P_i)$ and $F^{(2)}(P_i)$ tends to be parallel as $i\to \infty$. 
The same situation holds for any pair of adjacent facets of $P_i$ in which at least one of the sequence of primitive normal vectors is unbounded. 
Now since $\{P_i\}_i$ converges to a simple convex polytope $P$, there exist at least two facets $\{F^{(r_i)}(P_i)\}_i$ and $\{F^{(r'_i)}(P_i)\}_i$ with $r_i \leq r_i'$ such  that  they converge essentially to some facets of $P$ which are adjacent to $F^{(1)}(P)$.
If $\{\nu_i^{(r)}\}_i$ are unbounded for $r=1,2,\ldots, r_i-1$, then the above argument of determinant shows that all facets $\{F^{(r)}(P_i)\}_i$ tend to be parallel to each other. It implies that $F^{(1)}(P_i)$ and $F^{(r_i)}(P_i)$ tend to be parallel each other, and it is a contradiction.  It implies that there exists $r\in\{2,\ldots, r_i-1\}$ such that $\nu^{(r)}_i$ is bounded. 
The same argument implies that there exists $r'\in\{r_i', \ldots, N_i\}$ such that $\nu^{(r'_i)}_i$ is bounded, where $N_i$ is the number of facets of $P_i$. 

Since $\{F^{(r)}(P_i)\}_i$ and $\{F^{(r')}(P_i)\}_i$ tend to be parallel to $F^{(1)}(P)$ and the bounded primitive normal vectors $\{\nu^{(r)}_i\}_i$ and $\{\nu_i^{(r')}\}_i$ are integral vectors then we may assume that $\nu^{(r_i)}_i=\nu_i^{(1)}=\nu^{(r_i')}_i$ for any sufficiently large $i$ (by taking a subsequence of the subsequence). It is a contradiction because such a situation cannot be realized in a convex polytope $P_i$. In particular $\{\|\nu^{(1)}_i\|\}_i$ is bounded, and it completes the proof of the theorem. 
\end{proof}

\begin{Rem}
In Theorem~\ref{primtoprim} the boundedness of each primitive normal vector $\{\nu_i^{(r)}\}_i$ 
implies that it contains a constant subsequence. 
\end{Rem}

By the same argument we have the following convergence in the higher dimensional non-degenerate case.  
\begin{thm}\label{thm:primtoprimgen}
For a sequence $\{P_i\}_i\subset \calD_n$ suppose that $d^H(P_i, P)\to 0 \ (i\to \infty)$ for some $P\in\calD_n$ and 
$\displaystyle{\#}\{F^{(r)}(P)\}_r=\lim_{i\to\infty} ({\#}\{F^{(r)}(P_i)\}_r)$.
For each facet $F^{(r)}(P)$ of $P$ and  its primitive normal vector $\nu^{(r)}$, there exists a sequence of primitive normal vectors $\{\nu_i^{(r_i)}\}_i$ of $F^{(r_i)}(P_i)$ such that $\nu_i^{(r_i)}\to\nu^{(r)} \ (i\to \infty)$.
\end{thm}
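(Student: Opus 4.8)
The plan is to reduce the statement to a boundedness assertion for the integral normal vectors and then to extract that bound from the Delzant smoothness condition at a single vertex, where it becomes a determinant identity. First I would fix a bijective labelling of facets. By Corollary~\ref{cor:lscfacets} we have $\#\{F^{(r)}(P)\}_r\le\lim_i\#\{F^{(r)}(P_i)\}_r$, and the hypothesis turns this into an equality, say both equal to $N$. Exactly as in the proof of Theorem~\ref{primtoprim}, Proposition~\ref{FtoF} together with the semi-continuity of $\calH^{n-1}$ in the non-collapsing limit shows that each facet $F^{(r)}(P)$ is the essential limit of at least one sequence of facets of $\{P_i\}_i$; since both facet counts equal $N$, for all large $i$ this assignment is a bijection, and after relabelling the facets of $P_i$ as $F^{(1)}(P_i),\dots,F^{(N)}(P_i)$ we have $F^{(r)}(P_i)\to F^{(r)}(P)$ essentially, and in particular the inward unit normals converge, $\nu_i^{(r)}/\|\nu_i^{(r)}\|\to\nu^{(r)}/\|\nu^{(r)}\|$. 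Because each $\nu_i^{(r)}$ is a primitive integral vector, once $\{\|\nu_i^{(r)}\|\}_i$ is known to be bounded it contains a constant subsequence, which must equal $\nu^{(r)}$; so it suffices to prove that each $\{\|\nu_i^{(r)}\|\}_i$ is bounded.

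To produce the bound I would argue at a vertex. Every facet of $P$ contains a vertex of $P$, so it is enough to bound $\|\nu_i^{(r)}\|$ for the facets through a fixed vertex $v$ of $P$. By simpleness exactly $n$ facets meet at $v$; call them $F^{(r_1)}(P),\dots,F^{(r_n)}(P)$. By smoothness their primitive normals form a $\bbZ$-basis of $\bbZ^n$, so $|\det(\nu^{(r_1)},\dots,\nu^{(r_n)})|=1$. By Corollary~\ref{lowdimconv} with $k=0$ there are vertices $v_i$ of $P_i$ with $v_i\to v$. Granting for the moment that the $n$ facets of $P_i$ meeting at $v_i$ are exactly $F^{(r_1)}(P_i),\dots,F^{(r_n)}(P_i)$, smoothness of $P_i$ at $v_i$ gives $|\det(\nu_i^{(r_1)},\dots,\nu_i^{(r_n)})|=1$ as well. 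Dividing each column by its norm,
\[
\left|\det\left(\frac{\nu_i^{(r_1)}}{\|\nu_i^{(r_1)}\|},\dots,\frac{\nu_i^{(r_n)}}{\|\nu_i^{(r_n)}\|}\right)\right|=\frac{1}{\prod_{j=1}^n\|\nu_i^{(r_j)}\|},
\]
and since the unit normals converge to the linearly independent vectors $\nu^{(r_j)}/\|\nu^{(r_j)}\|$, the left-hand side tends to $\prod_{j=1}^n\|\nu^{(r_j)}\|^{-1}>0$. Hence $\prod_{j=1}^n\|\nu_i^{(r_j)}\|$ stays bounded, and as every factor is at least $1$, each $\|\nu_i^{(r_j)}\|$ is bounded. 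Letting $v$ range over all vertices of $P$ covers every facet, and the theorem follows.

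The step I expect to be the main obstacle is the combinatorial stability used above: that for large $i$ the $n$ facets of $P_i$ through $v_i$ are precisely $F^{(r_1)}(P_i),\dots,F^{(r_n)}(P_i)$. I would establish it as follows. Since $P_i$ is simple, exactly $n$ facets pass through $v_i$; if $F^{(s)}(P_i)$ is one of them, then $v_i\in F^{(s)}(P_i)$ and the essential convergence $F^{(s)}(P_i)\to F^{(s)}(P)$ forces $\mathrm{dist}(v_i,F^{(s)}(P))\to0$, so $v\in F^{(s)}(P)$. Thus each facet of $P_i$ through $v_i$ corresponds to a facet of $P$ through $v$; there are exactly $n$ of the latter and exactly $n$ of the former, so the index sets coincide for $i$ large. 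This is precisely where the hypothesis $\#\{F^{(r)}(P)\}_r=\lim_i\#\{F^{(r)}(P_i)\}_r$ does its work: it upgrades the one-sided lower semi-continuity of Corollary~\ref{cor:lscfacets} into the facet bijection on which the vertex-incidence argument rests, and it is what lets us bypass the more delicate parallel-facet contradiction of the two-dimensional Theorem~\ref{primtoprim}.
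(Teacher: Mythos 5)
Your proof is correct and follows essentially the same route as the paper: both reduce the statement to boundedness of the primitive normal vectors and extract that bound from the smoothness identity $\lvert\det(\nu_i^{(r_1)},\dots,\nu_i^{(r_n)})\rvert=1$ at a vertex, divided by the norms of the columns. The paper phrases this as a contradiction (unboundedness forces the normalized determinant to tend to $0$, contradicting the facet-count hypothesis) and leaves the facet/vertex correspondence implicit, whereas you run the determinant computation directly and spell out the combinatorial stability at the vertex; the underlying argument is the same.
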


\begin{proof}
As in the proof of Theorem~\ref{primtoprim} we  can take a sequence of primitive normal vectors $\{\nu^{(1)}_i\}_i$ 
of $\{F^{(1)}(P_i)\}_i$, 
and it suffices to show that $\{\|\nu^{(1)}_i\|\}_i$ is bounded. 
Suppose that $\{\|\nu^{(1)}_i\|\}_i$ is unbounded. 
Consider a vertex of $F^{(1)}(P_i)$ and facets around it. We may assume that they are numbered as 
$r=2,3,\cdots, n$. 
Then for their primitive normal vectors we have 
\[
\left|\det\left(\frac{\nu^{(1)}_i}{\|\nu^{(1)}_i\|}, \frac{\nu^{(2)}_i}{\|\nu^{(2)}_i\|}, \cdots ,
\frac{\nu^{(n)}_i}{\|\nu^{(n)}_i\|}\right)\right| \leq \frac{1}{\|\nu_i^{(1)}\|}\to 0 \quad (i\to \infty). 
\]
It contradicts to our assumption $\displaystyle{\#}\{F^{(r)}(P)\}_r=\lim_{i\to\infty} ({\#}\{F^{(r)}(P_i)\}_r)$.
\end{proof}

\subsection{From convergence of polytope to convergence of Guillemin metric}

We first give the definition of equivariant (measured) Gromov-Hausdorff convergence 
as a special case of \cite[Definition~1-3]{Fukaya}.

\begin{defn}\label{def:eqGH}
Let $X=(X,d)$ be a compact metric space and $\{X_i=(X_i,d_i)\}_i$ be a sequence of compact metric spaces. 
Suppose that there exists a group $G$ which acts on $X$ and each $X_i$ 
in an effective and isometric way. 
Then $\{X_i\}_i$ converges to $X$ in the $G$-{\it equivariant Gromov-Hausdorff topology} if 
there exist sequences of maps $\{f_i:X_i\to X\}_i$, group automorphisms  
$\{\rho_i:G\to G\}_i$ and positive numbers $\{\epsilon_i\}_i$ 
such that the following conditions hold for any $i$ large enough. 
\begin{enumerate}
\item $\epsilon_i \to 0$ as $i\to \infty$. 
\item $|d_{i}(x,y)-d(f_i(x), f_i(y))|<\epsilon_i$  for all $x,y\in X_i$. 
\item For any $p\in X$ there exists $x\in X_i$ such that $d(p,f_i(x))<\epsilon_i$. 
\item $d(f_i(gx), \rho_i(g)f_i(x))<\epsilon_i$ for all $x\in X_i$ and $g\in G$. 
\end{enumerate}
This situation will be denoted by $X_i\xrightarrow{G\text{-eqGH}}X$ (or $X_i \to X$ for simplicity) and $f_i$ are called 
{\it approximation maps}. 

Moreover if $X$ (resp. $\{X_i\}_i$) is equipped with a $G$-invariant measure $m$ (resp. $m_i$)
in such a way that $(X,m)$ (resp. $(X_i, m_i)$) is a metric measure space and the push forward measure 
$(f_i)_{*}m_i$ converges to $m$ weakly, then we say 
$\{(X_i, m_i)\}_i$ converges to $(X, m)$ in the $G$-equivariant {\it measured} Gromov-Hausdorff topology 
and we will denote $X_i\xrightarrow{G\text{-eqmGH}}X$. 

When $X$ (resp. $X_i$) is a Riemannian manifold, we consider its Riemannian distance. 
\end{defn}

The above conditions (2), (3) and (4) mean that 
the approximation map $f_i$ is {\it almost isometric, almost surjective} and {\it almost equivariant}.

As a corollary of Theorem~\ref{thm:primtoprimgen} we have the following convergence theorem of symplectic toric manifolds. 
We emphasize that we do not put any assumptions on curvatures in our theorem below. 
\begin{thm}\label{thm:convGH}
For a sequence $\{P_i\}_i\subset \calD_n$ suppose that $d^H(P_i, P)\to 0 \ (i\to \infty)$ for $P\in\calD_n$ 
and $\displaystyle{\#}\{F^{(r)}(P)\}_r=\lim_{i\to\infty} ({\#}\{F^{(r)}(P_i)\}_r)$.  
Then there exists a subsequence of $\{M_{P_i}\}_i$ which converges to $M_P$ in the $T$-equivariant Gromov-Hausdorff topology. 
\end{thm}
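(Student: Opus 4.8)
The plan is to reduce, after passing to a subsequence, to the situation in which all the polytopes share the same primitive inward normal vectors as $P$, so that the $M_{P_i}$ are literally one and the same smooth $T$-manifold and only the Guillemin metric varies; then to write down an explicit equivariant comparison map and to control its metric distortion by separating the interior (where the metrics converge uniformly) from a thin collar near $\partial P$ (where the metric degenerates but the geometry forms a uniformly small cap).

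First I would normalize. By Theorem~\ref{thm:primtoprimgen} together with the Remark after Theorem~\ref{primtoprim}, after passing to a subsequence I may match the facets of $P_i$ with those of $P$ and assume $\nu_i^{(r)}=\nu^{(r)}$ for $r=1,\dots,N$ and all large $i$, while by Corollary~\ref{ltol} the scalars satisfy $\lambda_i^{(r)}\to\lambda^{(r)}$. Since the normals coincide, the map $\tilde\pi$, the subtorus $H$ and the quotient $T=T^N/H$ of §\ref{subsec:Delcon} are identical for all these $P_i$ and for $P$; hence $M_{P_i}$ and $M_P$ are the same smooth toric manifold $M$ with the same $T$-action, the only varying datum being the symplectic form and so the Guillemin metric. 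As $l_i^{(r)}(x)=\langle x,\nu^{(r)}\rangle-\lambda_i^{(r)}\to l^{(r)}$ uniformly with all derivatives on compact subsets of $P^{\circ}$, the potentials satisfy $g_{P_i}\to g_P$ and $G_{P_i}\to G_P$ in $C^0_{\mathrm{loc}}(P^\circ)$; by Theorem~\ref{Guillemins metric} the Guillemin metrics then converge uniformly on $\mu_P^{-1}(K)$ for every compact $K\subset P^\circ$.

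Next I would set up the approximation maps. Fix face-preserving diffeomorphisms $\phi_i\colon P_i\to P$ with $\phi_i\to\mathrm{id}$ uniformly (and in $C^1$ on each $P^\delta:=\{x\in P:\mathrm{dist}(x,\partial P)\ge\delta\}$), sending $F^{(r)}(P_i)$ to $F^{(r)}(P)$; these exist for large $i$ because the face lattices are combinatorially identical and $\lambda_i^{(r)}\to\lambda^{(r)}$. In the symplectic coordinates $P_i^\circ\times T\cong M^\circ_{P_i}$ and $P^\circ\times T\cong M^\circ_P$ set $f_i(x,y):=(\phi_i(x),y)$. Since $\phi_i$ respects facets and the circle collapsing over each facet is governed by the same normal $\nu^{(r)}$, the map $f_i$ extends over all lower-dimensional toric strata to a $T$-equivariant homeomorphism $M_{P_i}\to M_P$; in particular $f_i$ is surjective and \emph{exactly} equivariant, so conditions (3) and (4) of Definition~\ref{def:eqGH} hold with $\rho_i=\mathrm{id}$ and zero error in (4). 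It remains to verify the almost-isometry condition (2).

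The core of the argument, and the main obstacle, is the uniform metric estimate near $\partial P$, where $G_{P_i}$ blows up like $1/l_i^{(r)}$ and global uniform convergence of the metrics fails. The plan is to split distances. Given $\epsilon>0$, I would use the explicit form $g_P=\tfrac12\sum_r l^{(r)}\log l^{(r)}$ from \eqref{g_P} to show that near a facet the collapsing circle has length $O(\sqrt{l^{(r)}})$ while the radial cost to reach depth $l^{(r)}=\delta$ is $\int_0^\delta O(s^{-1/2})\,ds=O(\sqrt\delta)$, and analogously that near each lower stratum, and near the fixed points where the Delzant model is the smooth flat one on $\bbC^n$, the collar over $P\setminus P^\delta$ lies within Guillemin-distance $C\sqrt\delta$ of the core $\mu_P^{-1}(P^\delta)$, the constant $C$ being uniform in $i$ thanks to the uniform convergence $l_i^{(r)}\to l^{(r)}$. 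Choosing $\delta$ with $C\sqrt\delta<\epsilon/3$ controls the boundary contribution for $M_{P_i}$ and for $M_P$ simultaneously, while on the compact core the uniform convergence $G_{P_i}\to G_P$ and $\phi_i\to\mathrm{id}$ from the first step make $f_i$ $(\epsilon/3)$-isometric for all large $i$. A triangle-inequality bookkeeping combining the two regions then gives $|d_{P_i}(u,v)-d_P(f_i(u),f_i(v))|<\epsilon$ for all $u,v$ and all large $i$, so $\epsilon_i\to0$ and $M_{P_i}\xrightarrow{T\text{-eqGH}}M_P$ along the subsequence.
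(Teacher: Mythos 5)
Your normalization step (stabilize the primitive normals along a subsequence via Theorem~\ref{thm:primtoprimgen}, let $\lambda_i^{(r)}\to\lambda^{(r)}$, and identify all the $M_{P_i}$ with a single smooth $T$-manifold) is exactly how the paper begins, but from there the two arguments diverge genuinely. The paper never touches the Guillemin potential: it works \emph{upstairs} in $\bbC^N$, observes that the level sets $(\iota_i^*\circ\tilde\mu_i)^{-1}(0)$ are compact submanifolds whose defining equations converge, so they converge in the ($T^N$-equivariant) Gromov--Hausdorff sense, and then invokes Fukaya's theorem that equivariant GH convergence descends to quotients by the fixed subtorus $H$; the $T=T^N/H$-equivariance comes for free from $H_i=H$. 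You instead work \emph{downstairs} on $P^\circ\times T$ with the explicit Hessian metric, build an exactly equivariant homeomorphism $f_i(x,y)=(\phi_i(x),y)$, and control distortion by splitting into a compact core (uniform $C^0$ convergence of $G_{P_i}$) and a collar where the $\sqrt{l^{(r)}}$-asymptotics make every point lie within $C\sqrt\delta$ of the core. Both routes are viable. The paper's buys brevity: all boundary degeneration of the Guillemin metric is hidden in the smoothness of the level set in $\bbC^N$ and in the black box of Fukaya's quotient theorem. Yours buys an explicit, exactly equivariant approximation map and quantitative control of $\epsilon_i$ in terms of $\|\phi_i-\mathrm{id}\|$ and $\sup_K\|G_{P_i}-G_P\|$, which is more information than the theorem asks for and would be reusable elsewhere. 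Two places in your sketch still need honest work before it is a proof: (i) the collar estimate must be made uniform near the lower-dimensional strata (where several $l^{(r)}$ vanish simultaneously) and uniform in $i$, which uses that the combinatorics and normals have stabilized; and (ii) the final ``triangle-inequality bookkeeping'' conflates the intrinsic distance of the core $\mu^{-1}(P^\delta)$ with the ambient distance in $M_{P_i}$ --- a minimizing path between core points may dip into the collar, so you need an extra argument (e.g.\ comparing on a slightly larger core $\mu^{-1}(P^{\delta'})$, $\delta'\ll\delta$, on which the metrics still converge uniformly, and absorbing the excursions into the $O(\sqrt{\delta})$ error). Neither issue is fatal, but both are precisely the technicalities the paper's route avoids.
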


\begin{proof}
We use the same notations as in Section~\ref{subsec:Delcon} with suffix $i$. 
We may assume $N={\#}\{F^{(r)}(P)\}_r={\#}\{F^{(r)}(P_i)\}_r=N_i$. 
The proof of Theorem~\ref{thm:primtoprimgen} implies that ${\frakh}_i={\frakh}$ and $H_i=H$ for $i\gg 0$. 
Moreover as a corollary of Theorem~\ref{thm:primtoprimgen} we have 
$\lambda_{i}^{(r)}\to \lambda^{(r)}$ ($i\to\infty$) for the constants of the defining equations of $P_i$ 
(after renumbering the facets). 
As a consequence $(\iota_i^*\circ\tilde\mu_i)^{-1}(0)$ converges to $(\iota^*\circ\tilde\mu)^{-1}(0)$ 
in the equivariant Gromov-Hausdorff topology\footnote{In fact this convergence is nothing other than the 
Hausdorff convergence of a sequence of compact subsets in $\bbR^N$. }. 
Then $\{M_{P_i}=(\iota_i^*\circ\tilde\mu_i)^{-1}(0)/H_i\}_i$ converges to 
$M_P=(\iota^*\circ\tilde\mu)^{-1}(0)/H$ in the Gromov-Hausdorff topology by \cite[Theorem2-1]{Fukaya}. 
Moreover the identifications $H_i=H$ induce identifications $T_i^n=T^N/H_i=T^N/H=T^n$, which 
makes the above convergence into the $T$-equivariant Gromov-Hausdorff topology.  
\end{proof}

\begin{cor}\label{cor:stability}
Under the same assumptions in Theorem~\ref{thm:convGH}, take a subsequence in 
$\{M_{P_i}\}_i$ which converges to $M_P$.  Then $M_{P_i}$ are $T$-equivariantly diffeomorphic to $M_P$ for $i\gg 0$.
\end{cor}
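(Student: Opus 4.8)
The plan is to exploit the fact that, for $i\gg 0$, the only datum in the Delzant construction that still varies is the vector of support constants $\lambda_i=(\lambda_i^{(1)},\dots,\lambda_i^{(N)})$, and then to realize $M_{P_i}$ and $M_P$ as fibers of a single smooth family over a connected parameter space, to which an equivariant version of Ehresmann's fibration theorem applies.

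First I would record what is already in hand. By the proof of Theorem~\ref{thm:primtoprimgen}, which underlies Theorem~\ref{thm:convGH}, along the chosen subsequence we may arrange $N_i=N$, and after renumbering the facets the primitive inward normals stabilize, $\nu_i^{(r)}=\nu^{(r)}$ for all $r$. Consequently the linear map $\tilde\pi$, the subtorus $H=\ker\tilde\pi$, its Lie algebra $\frakh$, and the inclusion $\iota$ all coincide with those of $P$ and are independent of $i$; only the constants satisfy $\lambda_i^{(r)}\to\lambda^{(r)}$. Thus, in the notation of Section~\ref{subsec:Delcon}, $M_{P_i}$ and $M_P$ are built from the same ambient Hamiltonian $T^N$-space on $\bbC^N$ and differ only through the value of $\lambda$ in the moment map.

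Next I would set up the family. For $\lambda'\in\bbR^N$ write $\tilde\mu_{\lambda'}(z):=-(|z_1|^2,\dots,|z_N|^2)+\lambda'$ and $Z_{\lambda'}:=(\iota^*\circ\tilde\mu_{\lambda'})^{-1}(0)\subset\bbC^N$. Note that $d\tilde\mu_{\lambda'}$ does not depend on $\lambda'$, so the regular-value condition at a point depends only on the point. Since $P$ is Delzant, $0$ is a regular value of $\iota^*\circ\tilde\mu_{\lambda}$ and $H$ acts freely on $Z_{\lambda}$; by compactness of $Z_{\lambda}$ both conditions persist for $\lambda'$ in a connected neighborhood $U\ni\lambda$ (for instance a small ball in $\bbR^N$), over which each $Z_{\lambda'}$ is a compact submanifold carrying a free $H$-action. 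The total space $\calZ:=\{(z,\lambda')\mid \lambda'\in U,\ z\in Z_{\lambda'}\}$ is then a manifold and the projection $\calZ\to U$ is a proper submersion; quotienting by the fibrewise free $H$-action yields a proper submersion $\calM:=\calZ/H\to U$ whose fiber over $\lambda'$ is $M_{P_{\lambda'}}$, and the residual torus $T^n=T^N/H$ acts fibrewise (trivially on $U$).

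Finally I would apply an equivariant Ehresmann argument. Averaging any Ehresmann connection on $\calM\to U$ over the compact torus $T^n$ produces a $T^n$-invariant horizontal distribution, so that parallel transport along paths in the connected base $U$ gives $T^n$-equivariant diffeomorphisms between fibers. Since $\lambda_i\to\lambda$, we have $\lambda_i\in U$ for $i\gg 0$, and parallel transport along the segment from $\lambda_i$ to $\lambda$ exhibits a $T^n$-equivariant diffeomorphism $M_{P_i}\cong M_P$. I expect the only genuine subtlety to be this equivariance: plain Ehresmann already yields diffeomorphic fibers, and the real work is upgrading the trivialization to a $T^n$-equivariant one, which is exactly where invariance of the connection under the compact group is used. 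Everything else — smoothness and properness of the family, and openness of the regular-value and free-action conditions — is routine once the normals $\nu^{(r)}$, hence $H$ and $\iota$, have been frozen by Theorem~\ref{thm:primtoprimgen}. Conceptually, the statement is simply that $P_i$ and $P$ share the same normal fan for $i\gg 0$, and the underlying $T^n$-manifold of a symplectic toric manifold depends only on that fan.
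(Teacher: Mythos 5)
Your argument is correct, and it shares its essential first step with the paper: both proofs begin by invoking Theorem~\ref{thm:primtoprimgen} (together with discreteness of the integer lattice) to freeze the primitive normals, hence $\tilde\pi$, $H$, $\frakh$ and $\iota$, so that for $i\gg 0$ only the constants $\lambda_i^{(r)}$ vary. Where you diverge is in how you pass from ``same normals, different $\lambda$'' to the equivariant diffeomorphism. The paper disposes of this in one line by citing the standard dictionary between fans and toric manifolds: each $M_{P_i}$ is $T$-equivariantly diffeomorphic to the toric variety of the normal fan $\Sigma_{P_i}$, the fan depends only on the $\nu^{(r)}$ and not on the $\lambda^{(r)}$, and the fans now coincide. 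You instead build the one-parameter family $\calM=\calZ/H\to U$ of symplectic quotients over a ball of $\lambda$-values and apply an equivariant Ehresmann argument, averaging a connection over $T^n$ to make parallel transport equivariant. This is a legitimate, self-contained differential-topological route that avoids appealing to the toric classification; its cost is that you must verify the openness of the regular-value and free-action conditions and the properness of $\calZ\to U$ (the latter coming from uniform boundedness of the perturbed polytopes $P_{\lambda'}$, which holds because the fixed normals positively span $\bbR^n$), all of which are routine but do require the preliminary uniform-boundedness step before the closed-image argument can be run. Your closing remark that the statement reduces to ``$P_i$ and $P$ share the same normal fan, and the equivariant smooth type depends only on the fan'' is precisely the paper's proof, so you have in effect reproved that standard fact in the special case of varying $\lambda$ within a fixed fan.
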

\begin{proof}
By Theorem~\ref{primtoprim} we may assume that $\nu_i^{(r)}=\nu^{(r)}$ for $i\gg 0$. 
On the other hand each $M_{P_i}$ is $T$-equivariantly diffeomorphic to the toric variety associated with 
the fan $\Sigma_{P_i}$. 
Note that $\Sigma_{P_i}$ is determined by the normal vectors $\{\nu_i^{(r)}\}_r$ and 
it does not depend on $\{\lambda_i^{(r)}\}_r$  (See \cite{BuchstaberPanov} for example).  
It implies the claim. 
\end{proof}

\begin{Rem}\label{ex:nonconv}
It can not be expected that a convergence as in Theorem~\ref{thm:convGH} occurs in general. 
%
Consider a sequence of Delzant pentagon $\{P_i\}_i$ as in Figure~\ref{pict:pentagon}, 
which converges to a rectangle $P$ defined by 5 inequalities. 
\begin{figure}[h]
\includegraphics[scale=0.5]{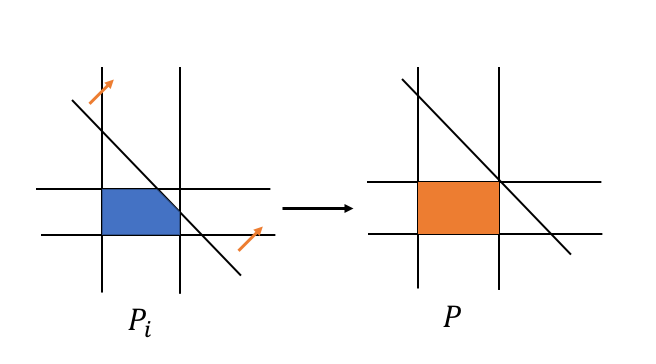}
\caption{A sequence of pentagons which converges to a rectangle} \label{pict:pentagon}
\end{figure}
It is known that the symplectic toric manifolds correspond to each pentagon $P_i$ are (diffeomorphic to) 
a 1 point blow-up of $\bbC P^1\times \bbC P^1$.  
The limiting process to $P$ corresponds to shrink the exceptional divisor in $M_{P_i}$. 
Their limit as symplectic quotient is defined by 5 inequalities, and it carries a Riemannian metric which is not isometric to the Guillemin metric. 
On the other hand in our setting $M_P$  is $\bbC P^1\times \bbC P^1$ equipped with the Guillemin metric. 
%
To deal with these subtle phenomena we have to consider finer structures on $\calD_n$ or $\widetilde\calD_n$ 
and incorporate {\it potential functions}. 
We will discuss such formulation in a subsequent paper. 
\end{Rem}

\subsection{From convergence of Guillemin metrics to convergence of polytopes}
Now let us discuss the convergence of the opposite direction.

Hereafter for each $P\in\calD_n$ we denote the symplectic toric manifold equipped with the Guillemin 
metric by $M_P=(M_P, \omega_P)$, and 
we use the Liouville volume form ${\vol}_{M_P}:=\frac{(\omega_P)^{\wedge n}}{n!}$ on the symplectic toric manifold $M_P$. 
In this way we think $M_P$ as a metric measure space. 

\begin{thm}\label{thm:GHHvertex}
Let $\{P_i\}_i$  be a sequence in $\calD_n$.  Suppose that a sequence of symplectic toric manifolds  
$\{M_{P_i}\}_i$ converges to $M_P$ for some $P\in\calD_n$ 
in the $T$-equivariant measured Gromov-Hausdorff topology.  
Let $\{f_i:M_{P_i}\to M\}_i$ be a sequence of approximation maps of the convergence. 
If $\{P_i\}_i$ are contained in a sufficiently large ball in $\bbR^n$, 
then we have 
\[
\lim_{i\to\infty}f_i(M_{P_i}^T)=M_P^T, 
\]where $M_{P_i}^T$ and $M_P^T$ are the fixed point sets of $T$-actions. 
In particular we have 
\[
\lim_{i\to\infty}\chi(M_{P_i})\geq \chi(M_P), 
\]where $\chi(\cdot)$ denotes the Euler characteristic. 
\end{thm}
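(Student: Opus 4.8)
\emph{Plan.} The plan is to push everything onto the genuinely $T$-invariant \emph{displacement function} and to detect the fixed points of the honest torus actions on the $M_{P_i}$ directly, rather than trying to transport topology from $M=M_P$ across the approximation maps $f_i$ (which are only almost-equivariant, and need not even be continuous). First I would recall the toric dictionary: by the Delzant construction (Subsection~\ref{subsec:Delcon}) and Theorem~\ref{Guillemins metric}, the $T$-action on each $M_Q$ ($Q\in\calD_n$) has isolated fixed points, one over each vertex of $Q$, and near such a point the action is the standard linear $T^n$-action on $\bbC^n$ (the edge normals form a $\bbZ$-basis, cf. Definition~\ref{def:delzantpoly}); hence by localisation $\chi(M_Q)=\chi(M_Q^T)=\#M_Q^T$, so the Euler-characteristic assertion is exactly $\liminf_i\#M_{P_i}^T\ge\#M_P^T$. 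On any such manifold set $\mathrm{displ}(x):=\max_{g\in T}d(gx,x)$; this is continuous, genuinely $T$-invariant (as $T$ is abelian), and vanishes precisely on the fixed-point set. Using conditions (2) and (4) of Definition~\ref{def:eqGH} together with the fact that each $\rho_i$ is a group automorphism of $T$, a three-term triangle inequality yields the two-sided comparison $|\mathrm{displ}_i(x)-\mathrm{displ}(f_i(x))|\le 2\epsilon_i$ for every $x\in M_{P_i}$.

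The easy inclusion follows at once. In the linear model $\mathrm{displ}$ is comparable to the distance to the fixed point, so for small $\delta>0$ there is $c_0>0$ with $\mathrm{displ}\ge c_0$ off $B(M_P^T,\delta)$. If $x\in M_{P_i}^T$, then $\mathrm{displ}_i(x)=0$, so $\mathrm{displ}(f_i(x))\le 2\epsilon_i<c_0$ for $i$ large, forcing $f_i(x)\in B(M_P^T,\delta)$. Thus $f_i(M_{P_i}^T)\subseteq B(M_P^T,\delta)$ eventually, giving $\limsup_i f_i(M_{P_i}^T)\subseteq M_P^T$.

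For the reverse inclusion I would grant myself the uniform estimate $(\star)$: for each $\delta>0$ there is $\eta>0$, \emph{independent of} $i$, with $\{\mathrm{displ}_i<\eta\}\subseteq B(M_{P_i}^T,\delta)$. Fix $p\in M_P^T$. By almost-surjectivity (condition (3)) choose $x_i\in M_{P_i}$ with $f_i(x_i)\to p$; the comparison gives $\mathrm{displ}_i(x_i)\le\mathrm{displ}(f_i(x_i))+2\epsilon_i\to0$, so by $(\star)$ there is, for $i$ large, a genuine fixed point $q_{i,p}\in M_{P_i}^T$ with $d_i(x_i,q_{i,p})<\delta$. The two-sided almost-isometry (condition (2)) then gives $d(f_i(q_{i,p}),p)\le d(f_i(q_{i,p}),f_i(x_i))+d(f_i(x_i),p)\le \delta+\epsilon_i+o(1)$, so letting $\delta\to0$ shows $f_i(q_{i,p})\to p$. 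For distinct $p\ne p'$ in $M_P^T$ the balls $B(p,2\delta),B(p',2\delta)$ are disjoint once $\delta$ is small, whence $q_{i,p}\ne q_{i,p'}$. This simultaneously yields $M_P^T\subseteq\lim_i f_i(M_{P_i}^T)$ (completing the Hausdorff convergence $\lim_i f_i(M_{P_i}^T)=M_P^T$) and $\#M_{P_i}^T\ge\#M_P^T$, i.e.\ $\liminf_i\chi(M_{P_i})\ge\chi(M_P)$.

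The hard part will be establishing $(\star)$, which is exactly the place where the absence of curvature bounds bites: on a single manifold \lq\lq almost fixed $\Rightarrow$ near genuine fixed\rq\rq\ is immediate from compactness, but here the manifolds vary and one must rule out almost-fixed points of the $T$-action drifting away from the honest fixed points as $i\to\infty$. The way I would close this is to exploit the \emph{explicit} Guillemin metric of Theorem~\ref{Guillemins metric}: writing the action and the Riemannian distance in the symplectic coordinates $(x,y)\in P_i^{\circ}\times T^n$ of Theorem~\ref{Guillemins metric}, and using crucially that all $P_i$ lie in one fixed large ball (the standing hypothesis, which bounds $\Diam(P_i)$ and hence the relevant geometry of $G_{P_i}={\rm Hess}_x(g_{P_i})$), one should obtain a lower bound for $\mathrm{displ}_i$ away from the $T$-orbits over the vertices that is uniform in $i$; this is precisely $(\star)$, and it is the technical crux of the proof.
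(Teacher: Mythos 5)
Your overall architecture is the same as the paper's: the ``easy inclusion'' ($f_i(M_{P_i}^T)$ eventually lies in any neighbourhood of $M_P^T$) is proved in the paper by exactly your displacement argument (the paper uses $\Diam(T\cdot f_i(x_i))<2\epsilon_i$ together with the function $\delta_\epsilon$, which is your lower bound $c_0$ for $\mathrm{displ}$ off $B(M_P^T,\delta)$), and your derivation of the Euler-characteristic inequality from an injection $p\mapsto q_{i,p}$ matches the paper's appeal to $\chi(M_Q)=\#M_Q^T$. The problem is that your estimate $(\star)$ is not an auxiliary technicality you may ``grant yourself'': it \emph{is} the theorem. The entire second half of the paper's proof is devoted to establishing precisely $(\star)$ (in the contrapositive form $f_i^{-1}(M^T)\subset B(M_{P_i}^T,\delta)$ for large $i$), and it is the only place where the hypothesis that the $P_i$ lie in a fixed ball is used. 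Leaving it as an assumption, with a one-sentence suggestion that the explicit Hessian $G_{P_i}={\rm Hess}(g_{P_i})$ ``should'' yield a uniform lower bound, is a genuine gap rather than a different route.

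Concretely, the difficulty you would face in closing $(\star)$ via the Guillemin metric is that the fiber metric over $x\in P_i^{\circ}$ is $G_{P_i}(x)^{-1}$, which degenerates near every facet, not just near the vertices; you must show that away from the vertices at least one orbit direction stays uniformly non-collapsed \emph{as $i$ varies}, and a bound on $\Diam(P_i)$ alone does not control $G_{P_i}^{-1}$ (the normal vectors $\nu_i^{(r)}$ and the combinatorics of the $P_i$ can still degenerate, cf.\ Remark~\ref{ex:nonconv}). The paper closes this step differently and more robustly: assuming $x_i\notin B(M_i^T,\delta)$ with $\Diam(T\cdot x_i)\to 0$, it deduces ${\rm Vol}(T\cdot x_i)\to 0$ (orbits are flat tori), then uses the uniform boundedness of the $P_i$, Duistermaat--Heckman, and the Hausdorff convergence of $P_i'=\mu_i(M_i\setminus B(M_i^T,\delta))$ to a compact set staying a definite distance from the limit of the vertex sets $\mu_i(M_i^T)$, so that the orbit-volume formula of \cite{IriyehOno} gives $\liminf_i{\rm Vol}(T\cdot x_i)>0$, a contradiction (with a stratum-by-stratum refinement for points over lower-dimensional faces). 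You would need to supply an argument of comparable substance; as written, the crux of the theorem is asserted rather than proved.
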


\begin{proof}
For simplicity we denote $M_i:=M_{P_i}$ and $M:=M_P$. 
We first show that $\displaystyle\lim_{i\to\infty}f_i(x_i)\in M^T$ for any sequence $\{x_i\in M_i^T\}_i$.  
Suppose that there exists $\delta>0$ such that $f_i(x_i)\notin B(M^T, {\delta})$ for 
infinitely many $i$.  
For $\epsilon > 0$, we define $\delta_\epsilon$ as the minimal $\delta' > 0$ such that if $y \not\in B(M^T, \delta')$, then $\Diam(T\cdot y) \ge \epsilon$. 
Note that since $M$ is compact such $\delta_\epsilon>0$ exists and $\delta_\epsilon\to 0$ as $\epsilon\to 0$. 
Since $f_i$ is almost $T$-equivariant we have 
\[
\epsilon_i>d(\rho_i(t)f_i(x_i), f_i(tx_i))=d(\rho_i(t)f_i(x_i), f_i(x_i))
\]for all $t\in T$, where $\{\epsilon_i\}_i$ is a sequence of positive numbers as in Definition~\ref{def:eqGH} and  
$d$ is the Riemannian distance of $M$. 
It implies that ${\Diam}(T\cdot f_i(x_i))<2\epsilon_i\to 0$ as $i\to \infty$. 
If we take $i$ large enough so that $\delta_{\epsilon_i}<\delta$, then we have  
$f_i(x_i)\in B(M^T, {\delta_{\epsilon_i}})$. It contradicts to $f_i(x_i)\notin B(M^T, {\delta})$. 

Next we show that for any $\delta>0$ there exists $i_0\in\bbN$ such that 
\[
f_i^{-1}(M^T)\subset B(M_i^T, {\delta})
\]holds for all $i>i_0$. If not then there exists $\delta>0$ such that we can take $x_i\in f_i^{-1}(M^T)$ and 
$x_i\notin B(M_i^T, {\delta})$ for infinitely many $i$. 
Since $f_i$ is almost isometry and almost $T$-equivariant we have 
\begin{eqnarray*}
d_i(tx_i, x_i)&<&d(f_i(tx_i), f_i(x_i))+\epsilon_i \\ 
&<&d(f_i(tx_i), tf_i(x_i))+\epsilon_i \\
&<&2\epsilon_i
\end{eqnarray*} for all $t\in T$, where $d_i$ is the Riemannian distance of $M_i$. 
It implies ${\Diam}(T\cdot x_i)<4\epsilon_i$. 
On the other hand it is known that each $T\cdot x_i$ is a flat torus, and hence, 
${\Diam}(T\cdot x_i)\to 0$ $(i\to\infty)$ implies 
${\rm Vol}(T\cdot x_i)\to 0 \ (i\to \infty)$,  where ${\rm Vol}$ is the Riemannian volume with respect to the 
induced Riemannian metric. 
Now consider a compact subset $P_i':=\mu_i(M_i\setminus B(M_i^T, \delta))$ of $P_i$. 
Since $\{M_i\}_i$ converges to $M$ in the measured Gromov-Hausdorff topology 
$\{{\rm Vol}(M_i)\}_i$ converges to ${\rm Vol}(M)$. 
Duistermaat-Heckman's theorem implies that the Euclidean volumes of $\{P_i\}_i$ converge to that of $P$. 
In particular they are bounded below by a positive constant. 
Moreover since we assume that $\{P_i\}_i$ are contained in a ball, the sequence of convex polytopes $\{P_i\}_i$ 
converges to some convex body $Q$ in the Hausdorff distance. As in the same way $\{P_i'\}_i$ converges to 
some compact subset $Q'$ of $Q$. 
Let $Q^{(0)}$ be the limit point set of $\mu_i(M_i^T)=P_i^{(0)}$. 
Then we have $Q^{(0)}\cap Q'=\emptyset$.  
When we take $\delta'>0$ small enough so that ${\rm dist}(Q^{(0)},Q' )>2\delta'$ we have 
${\rm dist}(P^{(0)}_i, P'_i)>\delta'$. 
The formula of volumes of the orbits in \cite{IriyehOno} implies
\footnote{Strictly speaking the formula in \cite{IriyehOno} can be applied when $\mu_i(x_i)$ is in the 
interior part of $P_i$. So the above argument shows that $\{x_i\}_i$ cannot be taken in such an interior part. 
As the next step we assume that $\{x_i\}_i$ sits in the inverse image of the interior part of codimension one face, 
and we deduce the contradiction. We proceed the same step for higher codimension face. } 
that 
\[
\liminf_{i\to \infty}{\rm Vol}(T\cdot x_i)>0. 
\]It contradicts to $\displaystyle\lim_{i\to \infty}{\rm Vol}(T\cdot x_i)=0$. 

The inequality 
\[
\lim_{i\to\infty}\chi(M_{P_i})\geq \chi(M_P), 
\]
follows from the fact that the Euler characteristic of symplectic toric manifold 
is equal to the number of fixed points. 
\end{proof}

Hereafter we discuss the convergence of polytopes under the same assumption in Theorem~\ref{thm:GHHvertex}. 
We first take and fix a section $S_i:P_i\to M_{P_i}$ of the moment map $\mu_i:M_{P_i}\to P_i$ for each $i$. 
Note that each $S_i$ is neither smooth nor continuous but only measurable in general. 
Let $\{f_i:M_{P_i}\to M_P\}_i$ be a sequence of approximation maps. 
It is known that we may assume that $f_i$ is a Borel measurable map. 
For each $i$ we define $F_i:P_i\to P$ by the composition $F_i:=\mu\circ f_i \circ S_i$.

\begin{thm}\label{thm:GHH}
Under the same assumptions in Theorem~\ref{thm:GHHvertex}  there exists a subsequence of 
$\{\overline{F_i(P_i)}\}_i$ which converges to $P$ in $d^H$ topology. 
\end{thm}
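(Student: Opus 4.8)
The plan is to prove the two one-sided Hausdorff bounds separately. Since the moment map sends $M_P$ onto $P$, we have $F_i(P_i)=\mu(f_i(S_i(P_i)))\subset\mu(M_P)=P$, so $\overline{F_i(P_i)}\subset P$ and the distance from $\overline{F_i(P_i)}$ to $P$ is already zero. It therefore remains to show that $F_i(P_i)$ becomes $\epsilon$-dense in $P$ for $i$ large: for every $p\in P$ and every $\epsilon>0$ there is, for all sufficiently large $i$, a point of $F_i(P_i)$ within distance $\epsilon$ of $p$. This density statement is the whole content, and once it is established the Hausdorff convergence (in fact of the whole sequence, whence of any subsequence) follows immediately.

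To prove density, fix $p\in P$ and choose a lift $q\in\mu^{-1}(p)\subset M_P$, which exists by surjectivity of $\mu$. Using almost surjectivity (condition (3) of Definition~\ref{def:eqGH}), pick $x_i\in M_{P_i}$ with $d(f_i(x_i),q)<\epsilon_i$, and set $y_i:=\mu_i(x_i)\in P_i$. The idea is to compare $F_i(y_i)=\mu(f_i(S_i(y_i)))$ with the controllable quantity $\mu(f_i(x_i))$. Since the fibres of the moment map of a symplectic toric manifold are exactly the $T$-orbits, both $S_i(y_i)$ and $x_i$ lie in $\mu_i^{-1}(y_i)$, so $S_i(y_i)=t\cdot x_i$ for some $t\in T$. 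Almost equivariance (condition (4)) then gives $d(f_i(t\cdot x_i),\rho_i(t)f_i(x_i))<\epsilon_i$, while $T$-invariance of $\mu$ yields $\mu(\rho_i(t)f_i(x_i))=\mu(f_i(x_i))$, because $\rho_i(t)\in T$ moves $f_i(x_i)$ within its own $T$-orbit.

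Let $L$ be a Lipschitz constant for the smooth map $\mu$ on the compact manifold $M_P$. Combining the previous line with the triangle inequality gives $\|F_i(y_i)-\mu(f_i(x_i))\|=\|\mu(f_i(t\cdot x_i))-\mu(\rho_i(t)f_i(x_i))\|\le L\,d(f_i(t\cdot x_i),\rho_i(t)f_i(x_i))<L\epsilon_i$, while $\|\mu(f_i(x_i))-p\|=\|\mu(f_i(x_i))-\mu(q)\|\le L\epsilon_i$. Hence $\|F_i(y_i)-p\|<2L\epsilon_i$. Since $F_i(y_i)\in F_i(P_i)$ and $\epsilon_i\to 0$, this shows $P\subset B(F_i(P_i),2L\epsilon_i)$, and together with $\overline{F_i(P_i)}\subset P$ we obtain $d^H(\overline{F_i(P_i)},P)\le 2L\epsilon_i\to 0$.

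The main obstacle is that the section $S_i$ is only Borel measurable, so $F_i$ has neither continuity nor any geometric regularity, and $F_i(P_i)$ cannot be controlled through any pointwise estimate on $S_i$ itself. The argument above sidesteps this entirely: I never evaluate $S_i$ at a prescribed point, but only use that $S_i(y_i)$ lands \emph{somewhere} in the moment-map fibre over $y_i$, which is a single $T$-orbit, so its image under $\mu\circ f_i$ is pinned down up to the almost-equivariance error by the reference value $\mu(f_i(x_i))$ supplied by almost surjectivity. The one genuinely structural input is thus that the fibres of $\mu_i$ are $T$-orbits and that $\mu$ is $T$-invariant, which together absorb the unknown automorphisms $\rho_i$. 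Alternatively, the same information can be organized measure-theoretically by disintegrating the Liouville measure of $M_P$ along $\mu$ and exploiting the measured convergence in Definition~\ref{def:eqGH}, which is the route prepared by the disintegration theorem of the appendix.
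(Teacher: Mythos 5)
Your proof is correct, but it takes a genuinely different route from the paper. The paper's proof is measure-theoretic: via the disintegration of the Liouville measure along the moment maps (Lemma~\ref{lem:disapprox}) and the almost-equivariance estimate (Lemma~\ref{lem:approxconstr}), it shows that the push-forward measures $(F_i)_*m_{P_i}$ converge weakly to $m_P$, upgrades this to $W_2$-convergence using the uniform bound $F_i(P_i)\subset P$, and then reads off the Hausdorff convergence of the supports. You instead argue purely metrically: the inclusion $\overline{F_i(P_i)}\subset P$ is immediate, and for density you exploit the same structural fact the paper uses inside Lemma~\ref{lem:approxconstr} --- that each fibre $\mu_i^{-1}(y)$ is a single $T$-orbit and that $\mu$ is $T$-invariant, so $\mu(\rho_i(t)f_i(x_i))=\mu(f_i(x_i))$ absorbs the unknown automorphism $\rho_i$ --- combined with almost surjectivity, almost equivariance, and the Lipschitz bound for $\mu$ on the compact manifold $M_P$. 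Each step checks out, and your version buys several things the paper's does not: convergence of the \emph{whole} sequence with an explicit rate $d^H(\overline{F_i(P_i)},P)\le 2L\epsilon_i$, no use of the \emph{measured} part of the equivariant Gromov--Hausdorff convergence (only conditions (3) and (4) of Definition~\ref{def:eqGH}), and no appeal to the disintegration theorem or to Theorem~\ref{thm:W2convweakconv}. What the paper's route buys in exchange is the intermediate statement that $(F_i)_*m_{P_i}\to m_P$ weakly and in $W_2$, which is of independent interest and fits the measured-convergence framework of the rest of the section; your argument does not recover that. The only point worth making explicit in a write-up is the standard fact that moment-map fibres of a symplectic toric manifold are single $T$-orbits, which the paper also invokes without proof.
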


 To show Theorem~\ref{thm:GHH} we prepare two lemmas. 
 
\begin{lem}\label{lem:disapprox}
 Consider the same setting as in Theorem~\ref{thm:GHH}. 
 For any $\varphi\in C_b(\bbR^n)$ there exists a sequence of measurable maps 
 $\{\varphi_i: P_i\to \bbR\}_i$ such that 
 \[
 \lim_{i\to \infty}\int_{P_i}\varphi_i~d\calL^n=\int_{P}\varphi ~d\calL^n. 
 \]
 \end{lem}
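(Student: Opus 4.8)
The plan is to take $\varphi_i:=\varphi\circ F_i=\varphi\circ\mu\circ f_i\circ S_i:P_i\to\R$, i.e. to transport $\varphi$ through the maps $F_i$ already defined above; this is measurable since $\mu$ is continuous, $f_i$ is Borel, and $S_i$ is measurable. The difficulty is that the natural object to estimate, $\varphi\circ\mu\circ f_i$, lives on $M_{P_i}$ but does not factor through the moment map $\mu_i$, so one cannot apply Duistermaat--Heckman to it directly. Instead I would lift the integral $\int_{P_i}\varphi_i\,d\calL^n$ to $M_{P_i}$ and split the argument into a ``weak convergence'' half and a ``disintegration'' half, exploiting that by Duistermaat--Heckman the moment map pushes the Liouville measure to Lebesgue measure on the polytope, $\mu_*\vol_{M_P}=\calL^n|_P$ and $\mu_{i*}\vol_{M_{P_i}}=\calL^n|_{P_i}$ (with the convention $T^n=\R^n/\Z^n$ the orbit fibers have unit volume, so these are exact).

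For the first half, since $\varphi\in C_b(\bbR^n)$ and $\mu$ is continuous, $\varphi\circ\mu$ is a bounded continuous function on the compact space $M_P$. Testing the weak convergence $(f_i)_*\vol_{M_{P_i}}\to\vol_{M_P}$ (part of the measured Gromov--Hausdorff hypothesis) against it and changing variables gives
\[
\int_{M_{P_i}}\varphi\circ\mu\circ f_i\,d\vol_{M_{P_i}}=\int_{M_P}\varphi\circ\mu\,d\big((f_i)_*\vol_{M_{P_i}}\big)\xrightarrow{i\to\infty}\int_{M_P}\varphi\circ\mu\,d\vol_{M_P}=\int_P\varphi\,d\calL^n,
\]
the last equality by Duistermaat--Heckman. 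It then remains to compare $\int_{M_{P_i}}\varphi\circ\mu\circ f_i\,d\vol_{M_{P_i}}$ with $\int_{P_i}\varphi_i\,d\calL^n$.

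For the second half I would disintegrate $\vol_{M_{P_i}}$ along $\mu_i$. Since $\mu_{i*}\vol_{M_{P_i}}=\calL^n|_{P_i}$, the disintegration theorem of Appendix~\ref{Disintegration} furnishes probability measures $(\vol_{M_{P_i}})_x$ on the fibers $\mu_i^{-1}(x)$ with
\[
\int_{M_{P_i}}\varphi\circ\mu\circ f_i\,d\vol_{M_{P_i}}=\int_{P_i}\Big(\int_{\mu_i^{-1}(x)}\varphi\circ\mu\circ f_i\,d(\vol_{M_{P_i}})_x\Big)\,d\calL^n(x).
\]
The crux is that $\varphi\circ\mu\circ f_i$ is nearly constant along each fiber, equal to $\varphi_i(x)$. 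For $x\in P_i^\circ$ the fiber is the orbit $T\cdot S_i(x)$; writing $y=t\cdot S_i(x)$ and using the almost-equivariance $d(f_i(t\,S_i(x)),\rho_i(t)f_i(S_i(x)))<\epsilon_i$ of Definition~\ref{def:eqGH}(4) together with the $T$-invariance of $\mu$, I get $|\mu(f_i(y))-F_i(x)|\le\mathrm{Lip}(\mu)\,\epsilon_i$, where $\mathrm{Lip}(\mu)$ is a Lipschitz constant for the smooth map $\mu$ on the compact $M_P$. As $\mu(f_i(y))$ and $F_i(x)$ both lie in $P$, uniform continuity of $\varphi|_P$ bounds the fiberwise oscillation by $\omega_\varphi(\mathrm{Lip}(\mu)\,\epsilon_i)\to0$, $\omega_\varphi$ a modulus of continuity of $\varphi$ on $P$. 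Integrating and using that $\calL^n(P_i)$ stays bounded yields $\big|\int_{M_{P_i}}\varphi\circ\mu\circ f_i\,d\vol_{M_{P_i}}-\int_{P_i}\varphi_i\,d\calL^n\big|\to0$, which combines with the first half to give the claim.

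The main obstacle is exactly this fiberwise near-constancy estimate: one must convert the purely metric almost-equivariance of $f_i$ into control of the \emph{moment-map values} $\mu\circ f_i$, and this works only because the moment map is a $T$-invariant, which cancels the translation $\rho_i(t)$ that almost-equivariance introduces; the Lipschitz bound on $\mu$ then converts metric proximity into proximity of values. A secondary point to verify is the Duistermaat--Heckman normalization (conditional measures of unit mass on the orbit fibers) and the fact that the degenerate fibers over $\partial P_i$ are irrelevant, since $\partial P_i$ is $\calL^n$-null.
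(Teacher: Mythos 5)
Your proof is correct, but it packages the argument differently from the paper and in fact does more. The paper's own proof of this lemma does not take $\varphi_i=\varphi\circ F_i$; it takes $\varphi_i(y):=\int_{M_{P_i}}\varphi\circ\mu\circ f_i\,d(\vol_{M_{P_i}})_y$, the fiberwise average of $\varphi\circ\mu\circ f_i$ with respect to the disintegration of $\vol_{M_{P_i}}$ along $\mu_i$. With that choice the lemma reduces to pure bookkeeping --- exactly your ``first half'': $\int_{P_i}\varphi_i\,d\calL^n=\int_{M_{P_i}}\varphi\circ\mu\circ f_i\,d\vol_{M_{P_i}}\to\int_{M_P}\varphi\circ\mu\,d\vol_{M_P}=\int_P\varphi\,d\calL^n$ by weak convergence of $(f_i)_*\vol_{M_{P_i}}$ and Duistermaat--Heckman, with no use of almost-equivariance at all. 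Your ``second half'' --- the fiberwise near-constancy of $\varphi\circ\mu\circ f_i$, obtained by cancelling the translation $\rho_i(t)$ against the $T$-invariance of $\mu$ and then invoking uniform continuity --- is precisely the content of the paper's next lemma (Lemma~\ref{lem:approxconstr}), which compares the fiberwise average with $\varphi\circ F_i$. So you have in effect proved Lemma~\ref{lem:disapprox} and Lemma~\ref{lem:approxconstr} simultaneously. What your version buys is a concrete $\varphi_i$ that is directly the one needed downstream in the proof of Theorem~\ref{thm:GHH} (weak convergence of $(F_i)_*\calL^n$ to $\calL^n|_P$); what the paper's split buys is a cleaner separation of the measure-theoretic bookkeeping from the equivariance estimate. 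Your subsidiary checks --- the Duistermaat--Heckman normalization $\mu_{i*}\vol_{M_{P_i}}=\calL^n|_{P_i}$, the $\calL^n$-nullity of $\partial P_i$, and the fact that every moment-map fiber is a single $T$-orbit so that $y=t\cdot S_i(x)$ is always available --- are all sound, and match what the paper uses implicitly.
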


\begin{proof} 
Let $\mu_i:M_{P_i}\to P_i\subset \bbR^n$ and $\mu:M\to P\subset \bbR^n$ be the
 moment maps. By Duistermaat-Heckman's theorem we have 
$(\mu_i)_*({\vol}_{M_{P_i}})=\calL^n|_{P_i}$. 

For $\varphi\in C_b(\bbR^n)$ we define $\tilde\varphi\in C(M_P)$ by $\tilde\varphi:=\varphi\circ \mu$. 
Let $\{f_i\}_i$ be a family of approximation maps for $M_{P_i}\xrightarrow{T\text{-eqmGH}}M_P$.  
We define a sequence of measurable functions $\{\tilde\varphi_i:M_i\to \bbR\}_i$ by 
$\tilde\varphi_i:=\tilde\varphi\circ f_i$. 
Let $\{({\vol}_{M_{P_i}})_y\}_{y\in P_i}$ (resp. $\{({\vol}_{M_{P}})_y\}_{y\in P}$) be a disintegration 
(See Appendix~\ref{App.B}) for $\mu_i:M_{P_i}\to P_i$ (resp. $\mu:M_P\to P$) and define 
a sequence of measurable functions $\{\varphi_i:P_i\to\bbR\}_i$ by 
\begin{equation}\label{def:varphi_i}
\varphi_i(y):=\int_{M_{P_i}}\tilde\varphi_i(x)({\vol}_{M_{P_i}})_y(dx).
\end{equation}
Since $(f_i)_*({\vol}_{M_{P_i}})$ converges to ${\vol}_{M_P}$ weakly we have 
\begin{eqnarray*}
\int_{P_i}\varphi_i(y) \calL^n(dy)&=&\int_{P_i}\left(\int_{M_{P_i}}\tilde\varphi_i(x)({\vol}_{M_{P_i}})_y(dx)\right)\calL^n(dy)\\
&=&\int_{M_{P_i}}\tilde\varphi_i(x){\vol}_{M_{P_i}}(dx)
=\int_{M_{P_i}}\tilde\varphi(f_i(x)){\vol}_{M_{P_i}}(dx) \\
&\xrightarrow[i\to\infty]{}&
\int_{M_P}\tilde\varphi(x){\vol}_{M_P}(dx)\\
&=&\int_{P}\left(\int_{M_P}\tilde\varphi(x)({\vol}_{M_{P}})_y(dx)\right)\calL^n(dy) \\ 
&=&\int_{P}\left(\int_{\mu^{-1}(y)}\varphi(\mu(x))({\vol}_{M_{P}})_y(dx)\right)\calL^n(dy) \\ 
&=& \int_{P}\left(\int_{\mu^{-1}(y)}\varphi(y)({\vol}_{M_{P}})_y(dx)\right)\calL^n(dy) \\ 
&=&\int_P\varphi(y)\calL^n(dy). 
\end{eqnarray*}
\end{proof}

\begin{lem}\label{lem:approxconstr}
As in the same setting in Theorem~\ref{thm:GHH} we have
\[
\lim_{i\to \infty}\frac{1}{|P_i|}\int_{P_i}\varphi_i d\calL^n=\lim_{i\to\infty}\frac{1}{|P_i|}\int_{P_i}\varphi\circ F_i d\calL^n. 
\] 
for any $\varphi\in C_b(\bbR^n)$, where $\varphi_i$ are as in Lemma~\ref{lem:disapprox}. 
\end{lem}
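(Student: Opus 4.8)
The plan is to observe that, for each $y$, both quantities $\varphi_i(y)$ and $\varphi(F_i(y))=\varphi(\mu(f_i(S_i(y))))$ are samples of the single measurable function $\psi_i:=\varphi\circ\mu\circ f_i:M_{P_i}\to\bbR$ along the fiber $\mu_i^{-1}(y)$. Since $(\mu_i)_*{\vol}_{M_{P_i}}=\calL^n|_{P_i}$, the disintegration measures $({\vol}_{M_{P_i}})_y$ are probability measures concentrated on $\mu_i^{-1}(y)$, so $\varphi_i(y)$ is the $({\vol}_{M_{P_i}})_y$-average of $\psi_i$ over the fiber, while $\varphi(F_i(y))=\psi_i(S_i(y))$ is its value at the section point $S_i(y)\in\mu_i^{-1}(y)$. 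Hence
\[
\varphi_i(y)-\varphi(F_i(y))=\int_{\mu_i^{-1}(y)}\big(\psi_i(x)-\psi_i(S_i(y))\big)\,({\vol}_{M_{P_i}})_y(dx),
\]
and the lemma reduces to a uniform bound on the oscillation of $\psi_i$ along fibers.

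The main step, and the only delicate one, is to show that $\psi_i$ is almost constant on each fiber. The fibers of a moment map are exactly the $T$-orbits, so for $y\in P_i^{\circ}$ and any $x,x'\in\mu_i^{-1}(y)$ there is $t\in T$ with $x'=tx$. The $T$-invariance of $\mu$ gives $\mu(\rho_i(t)f_i(x))=\mu(f_i(x))$, while the almost-equivariance of $f_i$ (condition (4) of Definition~\ref{def:eqGH}) gives $d(f_i(tx),\rho_i(t)f_i(x))<\epsilon_i$. Letting $L$ be a Lipschitz constant for the smooth moment map $\mu$ on the compact manifold $M_P$, I obtain
\[
\|\mu(f_i(x'))-\mu(f_i(x))\|=\|\mu(f_i(tx))-\mu(\rho_i(t)f_i(x))\|\le L\epsilon_i.
\]
Because $\mu\circ f_i$ takes values in the compact set $P$, the uniform continuity of $\varphi$ there yields $|\psi_i(x')-\psi_i(x)|\le\eta_\varphi(L\epsilon_i)$, where $\eta_\varphi$ denotes the modulus of continuity of $\varphi$ on $P$; this bound is uniform in $y$ and in $x,x'$.

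Combining these, $|\varphi_i(y)-\varphi(F_i(y))|\le\eta_\varphi(L\epsilon_i)$ for $\calL^n$-a.e.\ $y\in P_i$ (the boundary being $\calL^n$-null), so that
\[
\frac{1}{|P_i|}\left|\int_{P_i}\varphi_i\,d\calL^n-\int_{P_i}\varphi\circ F_i\,d\calL^n\right|\le\frac{1}{|P_i|}\int_{P_i}|\varphi_i-\varphi\circ F_i|\,d\calL^n\le\eta_\varphi(L\epsilon_i).
\]
Since $\epsilon_i\to0$ and $\eta_\varphi(s)\to0$ as $s\to0$, the right-hand side tends to $0$; note that the factor $1/|P_i|$ cancels against the trivial bound $\int_{P_i}1\,d\calL^n=|P_i|$, so no lower bound on $|P_i|$ is even required. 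Therefore the two sequences have the same limiting behaviour, which is exactly the asserted equality of limits. I expect the step identifying $\psi_i$ as almost constant along fibers to be the crux: it is where the almost-$T$-equivariance of $f_i$ and the $T$-invariance of $\mu$ conspire to make the fiber-average (defining $\varphi_i$) and the pointwise sample at $S_i(y)$ (defining $\varphi\circ F_i$) agree asymptotically, independently of the arbitrary measurable choice of sections $S_i$.
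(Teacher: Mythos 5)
Your argument is correct and is essentially the paper's own proof: both reduce the claim to showing that $\varphi\circ\mu\circ f_i$ is almost constant on each fiber $\mu_i^{-1}(y)$ by writing any fiber point as $t\cdot S_i(y)$, invoking the almost $T$-equivariance of $f_i$ together with the $T$-invariance of $\mu$, and then using uniform continuity of $\varphi$ (your Lipschitz constant for $\mu$ and modulus of continuity $\eta_\varphi$ are just a quantitative phrasing of the paper's appeal to uniform continuity of $\mu$ and $\varphi$).
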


\begin{proof}
Let $\{\rho_i:T^n\to T^n\}_i$ be a sequence of automorphisms as in Definition~\ref{def:eqGH} for
$M_{P_i}\xrightarrow{{\rm eq-m}GH}M_P$. 
Fix $\eta>0$ and $\varphi\in C_b(\R^n)$. 
For any $y\in P_i$ we have 
\begin{equation}\label{estimate1}
|\varphi_i(y)-\varphi(F_i(y))|
\leq \int_{\mu_i^{-1}(y)}|\varphi(\mu(f_i(x)))-\varphi(\mu(f_i(S_i(y))))|({\vol}_{M_{P_i}})_y(dx). 
\end{equation} Since for any $x\in \mu_i^{-1}(y)$ there exists $t_x\in T$ such that $x=t_x\cdot S_i(y)$ we have 
\begin{eqnarray*} 
\|\mu(f_i(x))-\mu(f_i(S_i(y)))\|&
=&\|\mu(f_i(t_x\cdot S_i(y)))-\mu(f_i(S_i(y)))\| \\ 
&=& \|\mu(f_i(t_x\cdot S_i(y)))-\mu(\rho_i(t_x)\cdot f_i(S_i(y)))\|. 
\end{eqnarray*}
On the other hand since $\varphi$ and $\mu$ are uniformly continuous 
and $\{M_{P_i}\}_i$ converges to $M_P$ in the $T$-equivariant Gromov-Hausdorff topology 
there exists $i_0\in\bbN$ such that if $i>i_0$, then 
\[
|\varphi(\mu(f_i(x)))-\varphi(\mu(f_i(S_i(y))))|=|\varphi(\mu(f_i(x)))-\varphi(\mu(\rho_i(t_x)\cdot f_i(S_i(y))))|<\eta. 
\]
In particular we have 
\[
|\varphi_i(y)-\varphi(F_i(y))|<\eta
\] in (\ref{estimate1}), and hence, 
\[
\frac{1}{|P_i|}\left|\int_{P_i}(\varphi_i(y)-\varphi(F_i(y)))\calL^n(dy)\right|<\eta. 
\]Note that our assumption $M_{P_i}\xrightarrow{T\text{-eqmGH}}M_P$ and 
Duistermaat-Heckman's theorem imply $|P_i|={\vol}_{M_{P_i}}(M_{P_i})\to  |P|={\vol}_{M_{P}}(M_{P})$. 
Since $\eta>0$ is arbitrary 
the limit of $\displaystyle\frac{1}{|P_i|}\int_{P_i}\varphi_i(y)\calL^n(dy)$ exists and we have the required equality 
\[
\lim_{i\to\infty}\frac{1}{|P_i|}\int_{P_i}\varphi_i(y)\calL^n(dy)=
\lim_{i\to\infty}\frac{1}{|P_i|}\int_{P_i}\varphi(F_i(y))\calL^n(dy). 
\]
\end{proof}

\begin{proof}[Proof of Theorem~\ref{thm:GHH}]
Let $\varphi\in C_b(\bbR^n)$. By Lemma~\ref{lem:disapprox} and Lemma~\ref{lem:approxconstr} we have a sequence of measurable maps $\{F_i:P_i\to P\}_i$ 
and measurable functions $\{\varphi_i:P_i\to \bbR\}_i$ such that 
\[
\lim_{i\to\infty}\int_P\varphi(y)(F_i)_*(\calL^n)(dy)=
\lim_{i\to\infty}\int_{P_i}\varphi_i(y)\calL^n(dy)
=\int_P\varphi(y)\calL^n(dy). 
\]
Note that we have $|P_i|\to |P| \ (i\to\infty)$ 
under our assumption, measured Gromov-Hausdorff convergence, and Duistermaat-Heckman's theorem. 
This equality implies that the sequence of probability measures 
$\{(F_i)_{*}m_{P_i}\}_i$ converges weakly to $m_{P}$. 
Since 
$F_i(P_i)\subset P$ we have 
\[
\lim_{R\to \infty}\limsup_{i\to\infty}\int_{\bbR^n\setminus B(0, R)}\|x\|^2(F_i)_{*}m_{P_i}(dx)=0. 
\]
It implies that $W_2((F_i)_{*}m_{P_i}, m_P)\to 0 \ (i\to \infty)$ by (1) and  (2) in Theorem~\ref{thm:W2convweakconv}, 
and hence, ${\supp}((F_i)_{*}(m_{P_i}))=\overline{F_i(P_i)}$ converges to $P$ as $i\to \infty$.  
\end{proof}

\begin{Rem}
Regarding Theorem~\ref{thm:GHHvertex} and Theorem~\ref{thm:GHH} let us mention some comments. 
It is natural to consider the following two problems;  removing the assumption on uniformly boundedness of $\{P_i\}_i$ and 
getting a convergence of $\{P_i\}_i$ to $P$ in the Gromov-Hausdorff or $d^H$-topology. 
One can see that these are not true in the literal sense because of the ambiguity of the affine transformation groups $G_n$. 
We could address these problems in terms of the moduli space. 
Namely one may hope that if $\{M_{P_i}\}_i$ converges to $M_P$ in the $T$-equivariant measured Gromov-Hausdorff topology, 
then there exists a sequence $\{\varphi_i\}_i$ in $G_n$ such that $\{\varphi_i(P_i)\}_i$ converges to $P$ 
in the Gromov-Hausdorff  or $d^H$-topology. 
It would be useful to consider minimum variance elements explained in Remark~\ref{rem:minvar}. 

\end{Rem}

\appendix
 \section{Preliminaries on probability measures and $L^2$-Wasserstein distance}\label{App.A}
 \label{Preliminaries}
In this appendix we summarize several facts on probability measures and $L^2$-Wasserstein distance. 
For more details consult \cite{Villanibook} for example. 

Let $\scrP(\bbR^n)$ be the set of all complete Borel probability measures on $\bbR^n$. 
Consider the subset of $\scrP(\bbR^n)$ consisting of measures with finite quadratic moment, 
\begin{align}
  \scrP_2(\bbR^n)&:=\left\{m\in\scrP(\R^n) \ \middle| \ \exists o\in\R^n, \ \int_{\R^n}\|x-o\|^2m(dx)<\infty\right\}\notag. 
 \end{align} 



\subsection{Weak convergence and Prokhorov's theorem}
\begin{defn}
A sequence $\{m_i\}_i$ in $\scrP(\R^n)$ {\it converges weakly to $m\in\scrP(\R^n)$} 
\[
\lim_{i\to\infty}\int_{\R^n}f(x)m_i(dx)=\int_{\R^n}f(x)m(dx)
\]for any bounded continuous function $f$ on $\R^n$. 
\end{defn}

\begin{thm}\label{thm:propertyweak}
For a sequence $\{m_i\}_i$ in $\scrP(\R^n)$ and $m\in\scrP(\R^n)$ the followings are equivalent. 
\begin{enumerate}
  \item $\{m_i\}_i$ converges weakly to $m$.
  \item For any open subset $U$ in $\R^n$ we have $\displaystyle\liminf_{i\to\infty} m_i(U)\geq m(U)$. 
  \item For any  closed subset $C$ in $\R^n$ we have $\displaystyle\limsup_{i\to\infty} m_i(C)\leq m(C)$. 
  \item For any Borel subset $A$ in $\R^n$ with $m(\overline{A}\setminus A^{\circ})=0$ we have 
  $\displaystyle\lim_{i\to\infty} m_i(A)= m(A)$. 
 \end{enumerate}
\end{thm}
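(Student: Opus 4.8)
The plan is to establish all four conditions as equivalent by running the cycle of implications $(1)\Rightarrow(2)\Leftrightarrow(3)\Rightarrow(4)\Rightarrow(1)$. This is the classical portmanteau theorem; the complementation $(2)\Leftrightarrow(3)$ and the sandwiching $(2)\,\&\,(3)\Rightarrow(4)$ are essentially formal, so the genuine analytic content sits in the two endpoints $(1)\Rightarrow(2)$ and $(4)\Rightarrow(1)$.

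First I would prove $(1)\Rightarrow(2)$. Given an open set $U$, I would approximate $\chi_U$ from below by the increasing family of bounded continuous functions $f_k(x):=\min\{1,\,k\,{\rm dist}(x,\R^n\setminus U)\}$, which satisfy $0\le f_k\le \chi_U$ and $f_k\uparrow \chi_U$ pointwise. Since $f_k\le \chi_U$ we have $\int_{\R^n}f_k\,dm_i\le m_i(U)$, so weak convergence gives
\[
\liminf_{i\to\infty}m_i(U)\ge \lim_{i\to\infty}\int_{\R^n}f_k\,dm_i=\int_{\R^n}f_k\,dm,
\]
and letting $k\to\infty$ with monotone convergence yields $\liminf_i m_i(U)\ge m(U)$. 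The equivalence $(2)\Leftrightarrow(3)$ is then immediate by complementation: for $U$ open and $C=\R^n\setminus U$ closed one has $m_i(U)=1-m_i(C)$ and $m(U)=1-m(C)$ (here the total mass being $1$ is used), so the two inequalities are negatives of one another.

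For $(2)\,\&\,(3)\Rightarrow(4)$ I would sandwich $A$ between $A^\circ$ and $\overline A$. The disjoint decomposition $\overline A=A^\circ\sqcup(\overline A\setminus A^\circ)$ together with $m(\overline A\setminus A^\circ)=0$ forces $m(A^\circ)=m(\overline A)=m(A)$, and then
\[
m(A^\circ)\le\liminf_{i\to\infty}m_i(A^\circ)\le\liminf_{i\to\infty}m_i(A)\le\limsup_{i\to\infty}m_i(A)\le\limsup_{i\to\infty}m_i(\overline A)\le m(\overline A)
\]
collapses to $\lim_i m_i(A)=m(A)$. The final arrow $(4)\Rightarrow(1)$ is the one I expect to be the main obstacle. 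For a bounded continuous $f$ I would first reduce by an affine normalization $f\mapsto af+b$ to the case $0\le f\le 1$ and use the layer-cake identity $\int f\,dm=\int_0^1 m(\{f>t\})\,dt$. The crucial point is that $\{f>t\}$ is an $m$-continuity set for all but countably many $t$: by continuity of $f$ one has $\partial\{f>t\}\subset\{f=t\}$, and the level sets $\{f=t\}$ with $m(\{f=t\})>0$ are pairwise disjoint, hence at most countably many (for each $n$ only finitely many can have measure exceeding $1/n$). Thus for Lebesgue-almost every $t\in[0,1]$ condition $(4)$ gives $m_i(\{f>t\})\to m(\{f>t\})$, and dominated convergence (the integrands are bounded by $1$) upgrades this to $\int f\,dm_i\to\int f\,dm$, closing the cycle.
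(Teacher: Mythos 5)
Your argument is correct. The paper itself gives no proof of this statement: Theorem~\ref{thm:propertyweak} is quoted in Appendix~\ref{App.A} as a standard preliminary (the portmanteau theorem, with \cite{Villanibook} as the general reference), so there is nothing to compare against. Your cycle $(1)\Rightarrow(2)\Leftrightarrow(3)\Rightarrow(4)\Rightarrow(1)$ is the classical route, and the two nontrivial steps are handled properly: the truncated distance functions $f_k$ do increase to $\chi_U$ and are bounded continuous, and in $(4)\Rightarrow(1)$ the inclusion $\partial\{f>t\}\subset\{f=t\}$ together with the countability of atoms of $t\mapsto m(\{f=t\})$ correctly identifies almost every superlevel set as an $m$-continuity set, after which the layer-cake formula and dominated convergence on $[0,1]$ close the loop.
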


\begin{thm}[Prokhorov's theorem]\label{thm:Prokhrov}
A subset $\calK\subset\calP(\R^n)$ is relatively compact with respect to the weak convergence topology if and only if for all $\epsilon>0$ there exists a compact subset $K\subset\R^n$  such that \footnote{A subset $\calK\subset\calP(\R^n)$ with this property is often called {\it tight}. } 
\begin{align}
  \sup_{ m\in\calK} m(\R^n\setminus K)<\epsilon\notag. 
 \end{align}
\end{thm}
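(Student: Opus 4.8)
The plan is to prove the two implications separately, working with the fact that the weak topology on $\scrP(\bbR^n)$ is metrizable, so that relative compactness is equivalent to sequential relative compactness. The substantive direction is that tightness implies relative compactness; the converse is a short argument by contradiction. Throughout I would freely use the portmanteau characterization recorded in Theorem~\ref{thm:propertyweak}. For the sufficiency direction, I would first pass to the one-point compactification $\widehat{X}:=\bbR^n\cup\{\infty\}$, a compact metric space, and regard each $m\in\calK$ as a Borel probability measure $\widehat{m}$ on $\widehat{X}$ carried by $\bbR^n$. Since $\widehat{X}$ is compact, $C(\widehat{X})$ is a separable Banach space, so the unit ball of its dual is weak-$*$ sequentially compact and the probability measures form a weak-$*$ closed subset; hence, given any sequence $\{m_i\}\subset\calK$, I would extract a subsequence with $\widehat{m}_{i_j}\to\widehat{m}$ weak-$*$ for some probability measure $\widehat{m}$ on $\widehat{X}$.

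The next step is to show that no mass escapes to infinity. For each $k$, tightness provides a compact $K_k\subset\bbR^n$ with $m(\bbR^n\setminus K_k)<1/k$ for all $m\in\calK$. Since $K_k$ is closed in $\widehat{X}$, the closed-set portmanteau inequality gives $\widehat{m}(K_k)\geq\limsup_j\widehat{m}_{i_j}(K_k)\geq 1-1/k$, so $\widehat{m}(\{\infty\})=0$ and $\widehat{m}$ restricts to a genuine probability measure $m_\infty\in\scrP(\bbR^n)$. It then remains to upgrade weak-$*$ convergence on $\widehat{X}$ (testing against $C(\widehat{X})$, i.e. bounded continuous functions on $\bbR^n$ with a limit at infinity) to weak convergence in $\scrP(\bbR^n)$ (testing against all of $C_b(\bbR^n)$). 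Given $f\in C_b(\bbR^n)$ with $\|f\|_\infty\leq M$ and $\epsilon>0$, I would fix $k$ with $1/k<\epsilon$, invoke the Tietze extension theorem on $\widehat{X}$ to obtain $g\in C(\widehat{X})$ agreeing with $f$ on $K_k$ and satisfying $\|g\|_\infty\leq M$, and estimate $|\int f\,dm_{i_j}-\int g\,d\widehat{m}_{i_j}|\leq 2M/k$ uniformly in $j$ (and similarly for $m_\infty$) using the tail bound on $\bbR^n\setminus K_k$. Combined with $\int g\,d\widehat{m}_{i_j}\to\int g\,d\widehat{m}$, this yields $\int f\,dm_{i_j}\to\int f\,dm_\infty$, establishing relative compactness of $\calK$.

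For the converse, assuming $\calK$ is relatively compact but not tight, I would produce $\epsilon>0$ and, exhausting $\bbR^n$ by closed balls $\overline{B(0,j)}$, measures $m_j\in\calK$ with $m_j(\bbR^n\setminus\overline{B(0,j)})\geq\epsilon$. Passing to a weakly convergent subsequence $m_{j_l}\to m\in\scrP(\bbR^n)$ and applying the closed-set portmanteau inequality to each closed set $\bbR^n\setminus B(0,R)$, I would obtain $m(\bbR^n\setminus B(0,R))\geq\epsilon$ for every $R>0$ (using that $\bbR^n\setminus\overline{B(0,j_l)}\subset\bbR^n\setminus B(0,R)$ once $j_l\geq R$). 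This contradicts $\lim_{R\to\infty}m(\bbR^n\setminus B(0,R))=0$, valid for the probability measure $m$, and so forces tightness.

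I expect the main obstacle to be the upgrade step in the sufficiency direction, where one passes from weak-$*$ convergence against $C(\widehat{X})$ to weak convergence against all of $C_b(\bbR^n)$: this is the point at which the compactification, the Tietze extension, and the uniform tail control from tightness must be combined correctly. In particular, both there and in the escaping-mass arguments, care is needed that the direction of each portmanteau inequality (open versus closed sets, $\liminf$ versus $\limsup$) is the one that actually delivers the required bound.
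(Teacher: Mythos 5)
The paper does not prove this statement: Prokhorov's theorem is recorded in Appendix~A as a standard background fact, with the reader referred to the literature (e.g.\ \cite{Villanibook}), so there is no in-paper argument to compare against. Your proof is the classical one-point-compactification argument (Banach--Alaoglu/Riesz on $C(\widehat{X})$, closed-set portmanteau to rule out mass at $\infty$, Tietze plus the uniform tail bound to upgrade to testing against all of $C_b(\bbR^n)$, and the standard contradiction for the converse), and it is correct; the only cosmetic point is that the negation of tightness gives $\sup_{m\in\calK}m(\bbR^n\setminus\overline{B(0,j)})\geq\epsilon$ rather than an element achieving $\geq\epsilon$, so you should select $m_j$ with $m_j(\bbR^n\setminus\overline{B(0,j)})\geq\epsilon/2$, which changes nothing in the argument.
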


For a weak convergent sequence of probability measure the following is well-known. 
See \cite{AmbrosioGigliSavare} for example. 

\begin{thm}\label{thm:Kratow}
If $\{ m_i\}_i\subset\calP(\R^n)$ has a weak convergent limit $ m \in\calP(\R^n)$, then for any $x\in \supp(m)$ there exists $x_i\in\supp(m_i)$ such that  $x_i\rightarrow x$. 
\end{thm}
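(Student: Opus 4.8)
The plan is to deduce the conclusion from the open-set characterization of weak convergence, namely condition (2) in Theorem~\ref{thm:propertyweak}. First I would fix $x\in\supp(m)$ and recall that, by the very definition of support, every open ball $B(x,r)$ with $r>0$ satisfies $m(B(x,r))>0$. The goal is then to show that for every $r>0$ the ball $B(x,r)$ eventually meets $\supp(m_i)$; granting this, a standard selection produces the desired sequence.

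The key observation is that positivity of mass forces the ball to meet the support: if $m_i(B(x,r))>0$, then $B(x,r)\cap\supp(m_i)\neq\emptyset$, since otherwise $B(x,r)$ would be an open subset of the complement of $\supp(m_i)$ and hence $m_i$-null, a contradiction. Now applying Theorem~\ref{thm:propertyweak}(2) to the open set $U=B(x,r)$ gives
\[
\liminf_{i\to\infty} m_i(B(x,r))\geq m(B(x,r))>0,
\]
so $m_i(B(x,r))>0$ for all $i$ large enough, and consequently $B(x,r)\cap\supp(m_i)\neq\emptyset$ for all such $i$.

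Finally I would turn this into a convergent sequence by a monotone selection. Choosing radii $r_k=1/k$, let $N_k$ be an index beyond which $m_i(B(x,1/k))>0$ (arranging the thresholds $N_k$ to be strictly increasing), and for $N_k\leq i<N_{k+1}$ pick any $x_i\in\supp(m_i)\cap B(x,1/k)$, with $x_i$ chosen arbitrarily in $\supp(m_i)$ for the finitely many indices $i<N_1$. Then $\|x_i-x\|<1/k$ whenever $i\geq N_k$, whence $x_i\to x$. The only real subtlety, and it is a mild one, is the bookkeeping that ensures $x_i$ is defined for \emph{every} index $i$ rather than merely along a subsequence while still guaranteeing convergence; the increasing choice of the thresholds $N_k$ handles exactly this point. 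I expect no genuine analytic obstacle, since the argument rests on nothing beyond the portmanteau characterization of weak convergence and the definition of support.
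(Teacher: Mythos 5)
Your proof is correct. Note that the paper itself gives no proof of this statement -- it is stated as well-known with a citation to \cite{AmbrosioGigliSavare} -- so there is nothing to compare against; your argument via the portmanteau characterization (Theorem~\ref{thm:propertyweak}(2) applied to $U=B(x,r)$) together with the observation that $m_i(B(x,r))>0$ forces $B(x,r)\cap\supp(m_i)\neq\emptyset$, followed by the monotone selection of thresholds $N_k$, is exactly the standard route and is complete. The only implicit points are that $\supp(m_i)$ is nonempty (true, since $m_i$ is a probability measure, so the arbitrary choice for $i<N_1$ is legitimate) and that an open set disjoint from the support is null (true in $\R^n$ by second countability); neither is a real gap.
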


\subsection{$L^2$-Wasserstein distance of probability measures}
For $m, m'\in\scrP_2(\R^n)$ let $\Cpl(m, m')$ be the set of all couplings between $m$ and $m'$. Namely $\Cpl(m, m')$ is the set of measures $\xi\in\scrP(\R^n\times\R^n)$ such that for any Borel subset $A$ of $\R^n$ it satisfies 
\begin{align}
 \begin{cases}
  \xi(A\times \R^n)=m(A)\\
  \xi(\R^n\times A)=m'(A). 
 \end{cases}\notag
\end{align}
The $L^2$-Wasserstein distance between $m, m'\in\scrP_2(\R^m)$ is defined by 
\begin{align}
 W_2(m,m'):=\inf\left\{ \left(\int_{\R^n\times\R^n}\|x-y\|^2\xi(dx, dy)\right)^{1/2} \ \middle| \ \xi\in\Cpl(m,m')\right\}.\notag
\end{align} 
It is known that $W_2$ is a metric on $\scrP_2(\R^n)$ and $(\scrP_2(\R^n), W_2)$ is a complete separable metric space with the following properties.

\begin{thm}\label{thm:W2convweakconv}
For a sequence $\{m_i\}_i$ in $\scrP_2(\R^n)$ and $m\in\scrP_2(\R^n)$ the followings are equivalent.  
 \begin{enumerate}
  \item $W_2(m_i,m)\rightarrow 0$ \ ($i\to\infty$). 
  \item $\{m_i\}_i$ converges weakly to $m$ and 
  \begin{align}
   \lim_{R\rightarrow\infty}\limsup_{i\rightarrow \infty}\int_{\R^n\setminus B(o,R)}\|x-o\|^2m_i(dx)=0.\notag
  \end{align}
  \item For any continuous function $\varphi$ such that $\vert \varphi(x)\vert\leq C(1+\|x_0-x\|)^2$ for some $C>0$, $x_0\in\R^n$ the following holds. 
  \begin{align}
   \lim_{i\rightarrow\infty}\int_{\R^n}\varphi\,dm_i=\int_{\R^n}\varphi\,dm. \notag
  \end{align}

 \end{enumerate}
\end{thm}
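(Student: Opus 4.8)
The plan is to prove the two equivalences $(2)\Leftrightarrow(3)$ and $(1)\Leftrightarrow(2)$; essentially everything is soft except the implication $(2)\Rightarrow(1)$, which I would reduce to compactly supported measures and then settle by constructing near-diagonal couplings by hand. Throughout I fix the point $o\in\R^n$ appearing in $(2)$ and abbreviate $c(x,y):=\|x-y\|^2$.

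\textbf{$(2)\Leftrightarrow(3)$ (uniform integrability).} First I would observe that $(3)$ applied to bounded continuous $\varphi$ (which trivially satisfy the growth bound) gives weak convergence, while $(3)$ applied to $\varphi(x)=\|x-o\|^2$ gives $\int\|x-o\|^2\,dm_i\to\int\|x-o\|^2\,dm$; feeding these into the bounded continuous truncations $g_R(x):=\min(\|x-o\|^2,R^2)$ and using monotone convergence (together with the elementary bound $\|x-o\|^2-g_R(x)\ge \tfrac12\|x-o\|^2$ on $\{\|x-o\|>\sqrt2\,R\}$) yields the tail condition in $(2)$. Conversely, for $(2)\Rightarrow(3)$ I would note $|\varphi(x)|\le C'(1+\|x-o\|^2)$ after recentering, split $\varphi=\varphi\chi_R+\varphi(1-\chi_R)$ with a continuous cutoff $\chi_R$ supported in $B(o,2R)$, handle $\varphi\chi_R$ by weak convergence (Theorem~\ref{thm:propertyweak}), and bound the tail $\int_{\|x-o\|>R}|\varphi|\,dm_i$ uniformly in $i$ using the tail condition of $(2)$ and Chebyshev (the second moments being bounded). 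This is a routine uniform--integrability argument.

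\textbf{$(1)\Rightarrow(2)$.} For weak convergence, given a bounded $1$-Lipschitz $f$ and any $\xi_i\in\Cpl(m_i,m)$, I would use
\[
\Big|\int f\,dm_i-\int f\,dm\Big|\le\int\|x-y\|\,\xi_i(dx,dy)\le\Big(\int c\,d\xi_i\Big)^{1/2},
\]
and minimizing over $\xi_i$ gives the bound $W_2(m_i,m)\to0$; since bounded Lipschitz functions determine weak convergence, $m_i$ converges weakly to $m$. For the moment condition I would use $W_2(m_i,\delta_o)^2=\int\|x-o\|^2\,dm_i$ and the triangle inequality $|W_2(m_i,\delta_o)-W_2(m,\delta_o)|\le W_2(m_i,m)\to0$ to get convergence of second moments, then combine with weak convergence exactly as in the first paragraph to extract the tail bound.

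\textbf{$(2)\Rightarrow(1)$ (the main obstacle).} Here I would truncate. Let $T_R\colon\R^n\to\overline{B(o,R)}$ be the radial $1$-Lipschitz retraction; the coupling $(\mathrm{id},T_R)_*m_i$ gives $W_2\big(m_i,(T_R)_*m_i\big)^2\le\int_{\|x-o\|>R}\|x-o\|^2\,dm_i$, which by the tail condition is small uniformly in $i$ once $R$ is large (and likewise for $m$). By the triangle inequality it then suffices to prove $W_2\big((T_R)_*m_i,(T_R)_*m\big)\to0$ for each fixed $R$, i.e.\ the compactly supported case, where $(T_R)_*m_i$ converges weakly to $(T_R)_*m$ and all measures live in $\overline{B(o,R)}$. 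This compact case is the genuine difficulty: weak convergence must be upgraded to $W_2$-convergence, and one only gets lower semicontinuity of the cost for free, so I must produce couplings of small cost explicitly. I would partition $\overline{B(o,R)}$ into finitely many Borel cells of diameter $<\delta$ whose boundaries are null for $(T_R)_*m$ (choosing a generic cubical grid), invoke Theorem~\ref{thm:propertyweak}(4) to get cellwise mass convergence, and match the two measures cell by cell: the diagonal blocks contribute at most $\delta^2$ to the transport cost, while the mass-defect correction has total mass $\tfrac12\sum_j|(T_R)_*m_i(A_j)-(T_R)_*m(A_j)|\to0$ and costs at most $(2R)^2$ times that. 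Hence $\limsup_i W_2^2\big((T_R)_*m_i,(T_R)_*m\big)\le\delta^2$, and letting $\delta\to0$ settles the compact case; letting $R\to\infty$ closes the argument. The explicit cell-matching coupling in this compact step is where the real work lies, everything else being truncation and uniform integrability.
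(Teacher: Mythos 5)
The paper does not prove this statement at all: it appears in Appendix~\ref{App.A} as one of several standard facts about the $L^2$-Wasserstein metric quoted from the literature (``consult \cite{Villanibook} for example''), so there is no in-paper argument to compare against. Your proposal is a correct, self-contained proof along the classical lines, and the overall architecture — $(2)\Leftrightarrow(3)$ by uniform integrability, $(1)\Rightarrow(2)$ by Cauchy--Schwarz on couplings plus $W_2(\cdot,\delta_o)^2$ giving second moments, and $(2)\Rightarrow(1)$ by truncating with the $1$-Lipschitz radial retraction and then upgrading weak convergence to $W_2$-convergence on a compact set via an explicit cell-matching coupling — is exactly how this theorem is established in the standard references. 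All the individual estimates check out: the bound $\|x-o\|^2-g_R(x)\ge\tfrac12\|x-o\|^2$ on $\{\|x-o\|>\sqrt2\,R\}$, the identity $\sum_j\bigl(m_i(A_j)-m(A_j)\bigr)^+=\tfrac12\sum_j|m_i(A_j)-m(A_j)|$ for probability measures, and the cost bound $\delta^2+(2R)^2\cdot o(1)$ are all right. The one point I would phrase more carefully is the Portmanteau step in the compact case: $(T_R)_*m$ can charge the sphere $\partial B(o,R)$, so if your cells are taken as $Q_j\cap\overline{B(o,R)}$ their $\R^n$-boundaries need not be $(T_R)_*m$-null even for a generic grid. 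This is cosmetic rather than substantive — use the half-open cubes $Q_j$ themselves as the cells (only finitely many meet $\overline{B(o,R)}$, and a generic grid offset makes every $\partial Q_j$ null), or invoke the Portmanteau theorem relative to the compact space $\overline{B(o,R)}$ — but as written the appeal to Theorem~\ref{thm:propertyweak}(4) needs that one-line adjustment.
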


Recall that if for $m, m'\in\scrP_2(\R^n)$ there exists a Borel measurable map $T:\R^n\to\R^n$ such that $T_*m=m'$ and $(\mathsf{id}\times T)_*m\in\Opt(m,m')$, then we say that the Monge problem for $m, m'$ admits a solution and $T$ is called a solution of the Monge problem. 

\begin{thm}
For $m, m'\in\scrP_2(\R^n)$ if $m\ll\calL^n$, then there is a solution of the Monge problem for $m$ and $m'$. The solution is unique in the following sense. For another solution $S:\R^n\to\R^n$ we have $m(\{T\neq S\})=0$. 
\end{thm}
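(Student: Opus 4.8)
The plan is to prove this classical Brenier--McCann statement along the standard route passing through cyclical monotonicity and the almost-everywhere differentiability of convex functions. First I would establish the existence of an optimal coupling. The set $\Cpl(m,m')$ has fixed marginals, hence is tight and therefore relatively compact for weak convergence by Prokhorov's theorem (Theorem~\ref{thm:Prokhrov}); it is also weakly closed. Since the cost functional $\xi\mapsto\int_{\R^n\times\R^n}\|x-y\|^2\,\xi(dx,dy)$ is weakly lower semicontinuous, it attains its minimum on $\Cpl(m,m')$, producing some $\xi\in\Opt(m,m')$.

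Next I would show that $\supp(\xi)$ is \emph{cyclically monotone}. If it were not, one could select finitely many points $(x_k,y_k)\in\supp(\xi)$ together with a permutation of the targets that strictly lowers the total transport cost, and then a local rearrangement of mass along small neighborhoods of these points would yield a competitor coupling of strictly smaller cost, contradicting the optimality of $\xi$. By Rockafellar's theorem, every cyclically monotone set is contained in the graph of the subdifferential of a lower semicontinuous convex function $\varphi\colon\R^n\to\R\cup\{+\infty\}$, so $\supp(\xi)\subset\{(x,y)\mid y\in\partial\varphi(x)\}$.

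I would then invoke the fact that a convex function is differentiable Lebesgue-almost everywhere: the set $N$ of non-differentiability points satisfies $\calL^n(N)=0$, and since $m\ll\calL^n$ by hypothesis, $m(N)=0$. On $\R^n\setminus N$ the subdifferential $\partial\varphi$ is the singleton $\{\nabla\varphi\}$, so setting $T:=\nabla\varphi$ ($m$-a.e.), the inclusion $\supp(\xi)\subset\partial\varphi$ forces $\xi$ to be concentrated on the graph of $T$; hence $\xi=(\mathsf{id}\times T)_*m$ and in particular $T_*m=m'$. Thus $T$ solves the Monge problem with $(\mathsf{id}\times T)_*m\in\Opt(m,m')$. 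For uniqueness, given another solution $S$, the average $\tfrac12\big((\mathsf{id}\times T)_*m+(\mathsf{id}\times S)_*m\big)$ is again a coupling of $m$ and $m'$ with minimal cost, hence optimal, hence concentrated in the graph of the subdifferential of a single convex function $\psi$; arguing as above, both $T$ and $S$ coincide $m$-a.e. with $\nabla\psi$, so $m(\{T\neq S\})=0$.

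The hard part will be the cyclical-monotonicity step together with the passage from the Kantorovich-type optimal \emph{plan} to an honest \emph{map}. The former requires a careful measure-theoretic swapping argument to rule out non-monotone supports, while the latter rests essentially on the absolute-continuity hypothesis $m\ll\calL^n$: without it the optimal plan need not be induced by any map, so this assumption is exactly what converts the differentiability of $\varphi$ into $m$-a.e. single-valuedness of $\partial\varphi$ and hence into a well-defined transport map.
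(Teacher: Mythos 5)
The paper itself offers no proof of this statement: it appears in Appendix~A as a recalled classical fact (the Brenier--McCann theorem), with the reader directed to Villani's book for details. Your outline is exactly the standard argument given there, and it is correct: existence of an optimal plan by tightness, Prokhorov compactness and weak lower semicontinuity of the quadratic cost; cyclical monotonicity of the support of an optimal plan; Rockafellar's theorem placing that support inside the graph of $\partial\varphi$ for a lower semicontinuous convex $\varphi$; and the hypothesis $m\ll\calL^n$ to make $\partial\varphi$ single-valued $m$-a.e., yielding $T=\nabla\varphi$ with $\xi=(\mathsf{id}\times T)_*m$. The averaging trick for uniqueness is likewise the standard one. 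Since there is no in-paper proof to compare against, I will only flag the two points where your sketch compresses real work: (i) for the quadratic cost one passes from $c$-cyclical monotonicity to classical cyclical monotonicity by expanding $\|x-y\|^2=\|x\|^2-2\langle x,y\rangle+\|y\|^2$ before invoking Rockafellar, and the swapping argument that rules out non-monotone supports needs the measure-theoretic care you acknowledge; (ii) a convex function is differentiable only $\calL^n$-a.e.\ on the \emph{interior} of its domain, so one must also observe that $m$ charges neither the complement of $\mathrm{dom}(\varphi)$ (because $\partial\varphi(x)\neq\emptyset$ for $x$ in the first projection of $\supp\xi$, which carries full $m$-measure) nor the boundary of $\mathrm{dom}(\varphi)$, the latter being Lebesgue-null and hence $m$-null by $m\ll\calL^n$. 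With those details supplied, the proposal is a complete and standard proof of the cited theorem.
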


\begin{cor}\label{cor:inprob}
For $m,m'\in\scrP_2(\R^n)$ with $m\ll\calL^n$ and a sequence $\{m'_i\}_i$ in $\scrP_2(\R^n)$ which 
converges weakly to $m'$, there exists a solution $T:\R^n\to\R^n$ of the Monge problem for $m$, $m'$ 
and a sequence $\{T_i\}_i$ of solutions of the Monge problem for $m$, $m_i'$ with 
\begin{align}
  m\left(\left\{x\in\R^n \ | \ \vert T_i(x)-T(x)\vert\geq \epsilon\right\}\right)\rightarrow 0 \ (i\to \infty). \notag
 \end{align}
\end{cor}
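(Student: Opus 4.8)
The plan is to realize the Monge solutions as optimal couplings, extract a weak limit, identify it with the graph of $T$ through optimality and uniqueness, and then pass from weak convergence of couplings to convergence in probability of the maps. First, since $m\ll\calL^n$, the preceding existence-and-uniqueness theorem provides a solution $T$ of the Monge problem for $(m,m')$ and, for each $i$, a solution $T_i$ for $(m,m_i')$; put $\pi_i:=(\mathsf{id}\times T_i)_*m\in\Opt(m,m_i')$ and $\pi:=(\mathsf{id}\times T)_*m\in\Opt(m,m')$. As $\{m_i'\}_i$ converges weakly it is tight by Theorem~\ref{thm:Prokhrov}, and together with the fixed measure $m$ this yields tightness of $\{\pi_i\}_i$ on $\R^n\times\R^n$; hence, again by Theorem~\ref{thm:Prokhrov}, $\{\pi_i\}_i$ is relatively compact in the weak topology.

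Next I would analyse an arbitrary weakly convergent subsequence $\pi_{i_k}\to\sigma$. Continuity of the coordinate projections together with $m_{i_k}'\to m'$ shows $\sigma\in\Cpl(m,m')$, and $m,m'\in\scrP_2(\R^n)$ gives $\int_{\R^n\times\R^n}\|x-y\|^2\,d\sigma<\infty$. To prove $\sigma$ optimal---bypassing the fact that under mere weak convergence the transport costs need not converge---I would use stability of $c$-cyclical monotonicity for the continuous cost $c(x,y)=\|x-y\|^2$: each $\supp\pi_{i_k}$ is $c$-cyclically monotone, every point of $\supp\sigma$ is a limit of points of $\supp\pi_{i_k}$ by Theorem~\ref{thm:Kratow}, and continuity of $c$ lets the cyclical-monotonicity inequalities pass to the limit, so $\supp\sigma$ is $c$-cyclically monotone; with $c$ continuous and $\int c\,d\sigma<\infty$ this forces optimality (see \cite{Villanibook}). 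Since $m\ll\calL^n$, the uniqueness part of the preceding theorem identifies the optimal coupling of $(m,m')$ uniquely as $\pi=(\mathsf{id}\times T)_*m$, so $\sigma=\pi$. As every subsequence has a further subsequence with this same limit, the full sequence satisfies $\pi_i\to\pi$ weakly.

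Finally I would convert weak convergence into convergence in probability. Fix $\epsilon,\eta>0$. By Lusin's theorem pick a compact $K$ with $m(\R^n\setminus K)<\eta$ on which $T$ is continuous, and extend $T|_K$ by Tietze to a continuous $\widetilde T:\R^n\to\R^n$. For $r>0$ set $\phi_r(x):=\max\{0,\,1-\mathrm{dist}(x,K)/r\}$, which is continuous, equals $1$ on $K$, and is supported in $B(K,r)$; then $H(x,y):=\phi_r(x)\,\min\{1,\|y-\widetilde T(x)\|/\epsilon\}$ is bounded and continuous, so $\int H\,d\pi_i\to\int H\,d\pi$. On $K$ one has $\widetilde T=T$, whence $\int H\,d\pi\leq m(B(K,r)\setminus K)$, while $\int H\,d\pi_i\geq m\big(K\cap\{\|T_i-T\|\geq\epsilon\}\big)$ for every $i$. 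Choosing $r$ small enough that $m(B(K,r)\setminus K)<\eta$ (possible since $\bigcap_{r>0}B(K,r)=K$) gives $\limsup_i m\big(K\cap\{\|T_i-T\|\geq\epsilon\}\big)<\eta$, and adding $m(\R^n\setminus K)<\eta$ yields $\limsup_i m(\{\|T_i-T\|\geq\epsilon\})<2\eta$. Since $\eta>0$ is arbitrary, $m(\{\|T_i-T\|\geq\epsilon\})\to 0$, which is the assertion.

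The main obstacle is the identification step in the second paragraph: because only weak convergence of $\{m_i'\}_i$ is assumed and the quadratic cost is unbounded, the optimal transport costs $W_2(m,m_i')$ may fail to converge to $W_2(m,m')$, so optimality of the limit cannot be read off from cost convergence and must instead be obtained from the closedness of $c$-cyclical monotonicity under weak limits. The concluding conversion is delicate only because $T$ is merely Borel, which is exactly what the Lusin--Tietze regularization in the third paragraph is designed to handle.
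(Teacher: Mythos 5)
The paper states Corollary~\ref{cor:inprob} without proof, deferring to \cite{Villanibook}, so there is no in-paper argument to compare against; judged on its own, your proof is correct and follows the standard stability argument for optimal transport maps. The three steps --- tightness and extraction of a weak subsequential limit of the plans $(\mathsf{id}\times T_i)_*m$, identification of that limit as optimal via closedness of $c$-cyclical monotonicity of supports under weak convergence (correctly sidestepping the false expectation that $W_2(m,m_i')\to W_2(m,m')$ under mere weak convergence of $m_i'$), and the Lusin--Tietze device converting weak convergence of the plans into convergence in $m$-probability of the maps --- are all sound; the final estimate $\int H\,d\pi_i\geq m(K\cap\{\|T_i-T\|\geq\epsilon\})$ and $\int H\,d\pi\leq m(B(K,r)\setminus K)$ is checked correctly. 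One small point: to conclude $\sigma=(\mathsf{id}\times T)_*m$ you need uniqueness of the \emph{optimal plan} for $(m,m')$ when $m\ll\calL^n$, i.e.\ that every optimal coupling is induced by a map; the theorem stated just before the corollary only asserts uniqueness of the Monge \emph{map}, so you are implicitly invoking the full Brenier--McCann theorem. That is a standard fact and deserves an explicit citation, but it is not a gap in substance.
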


\section{Disintegration theorem}\label{App.B}
\label{Disintegration}
We use the following type of disintegration theorem. See \cite[Theorem~16.10.1]{Garlingbook} for example. 
\begin{thm}
Let $X$ and $Y$ be complete separable metric spaces. 
Let $m$ be a $\sigma$-finite Borel probability measure and $f:X\to Y$ a Borel measurable map. 
Suppose that the push forward $f_*m$ is a $\sigma$-finite measure on $Y$.  
Then there exists a family of probability measures $\{m_y\}_{y\in Y}$ on $X$ such that 
for each Borel subset $A$ the map 
\[
Y\ni y\mapsto m_y(A)\in[0,1] 
\]is Borel measurable and for each Borel measurable function $\varphi$ on $X$ we have 
\[
\int_X\varphi~dm =\int_Y\left(\int_X\varphi(x) m_y(dx)\right)f_*m(dy). 
\]Moreover we have 
\[
m_y(f^{-1}(y))=1 \quad (y\in Y \  (f_*m\text{-a.e}) ). 
\]
\end{thm}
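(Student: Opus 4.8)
The plan is to build the fiber measures $\{m_y\}$ as regular conditional probabilities, invoking the Polish structure of $X$ and $Y$ only at the one place where it is genuinely needed. Since $m$ is a finite probability measure, set $\nu:=f_*m$, a Borel probability measure on $Y$. First I would fix a countable algebra $\calA$ of Borel subsets of $X$ generating $\calB(X)$; such an algebra exists because a separable metric space is second countable. For each $A\in\calA$ the set function $B\mapsto m(A\cap f^{-1}(B))$ is a finite measure on $Y$ absolutely continuous with respect to $\nu$ (if $\nu(B)=0$ then $m(f^{-1}(B))=0$), so the Radon--Nikodym theorem yields a Borel density $g_A:Y\to[0,1]$ with
\[
m(A\cap f^{-1}(B))=\int_B g_A\,d\nu\qquad(B\in\calB(Y)).
\]
Up to $\nu$-null sets, $y\mapsto g_A(y)$ inherits the finite additivity and monotonicity of $A\mapsto m(A\cap\cdot)$; since $\calA$ is countable, I can discard a single $\nu$-null set outside which, simultaneously for all $A\in\calA$, the map is finitely additive, monotone, valued in $[0,1]$, and normalized by $g_X(y)=1$.

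Next I would upgrade these finitely additive fiber set functions to genuine Borel probability measures, and this is the crux of the argument. For $\nu$-a.e.\ $y$ the premeasure $A\mapsto g_A(y)$ must be shown countably additive on $\calA$, equivalently continuous from above at $\emptyset$. The hard part will be exactly this verification, which is where completeness and separability are indispensable: I would carry it out using inner regularity of Borel measures on Polish spaces by compact sets (tightness), approximating elements of $\calA$ from inside by compacta and running a countable-exhaustion argument to remove a further $\nu$-null set off which the set function is $\sigma$-additive. Carathéodory extension then produces a Borel probability measure $m_y$ with $m_y(A)=g_A(y)$ for all $A\in\calA$; on the exceptional null set I set $m_y$ to be an arbitrary fixed $\delta_{x_0}$. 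The class of Borel $A$ for which $y\mapsto m_y(A)$ is Borel is a $\lambda$-system containing the $\pi$-system $\calA$, so $y\mapsto m_y(A)$ is Borel for every Borel $A$.

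With $\{m_y\}$ in hand, I would prove the integral identity by a monotone class argument. By construction it holds for $\varphi=\chi_{A\cap f^{-1}(B)}$ with $A\in\calA$, $B\in\calB(Y)$, hence for $\varphi=\chi_A$ with $A\in\calA$; the collection of Borel sets $A$ for which $\int_X\chi_A\,dm=\int_Y m_y(A)\,d\nu$ is a Dynkin system containing $\calA$, so it is all of $\calB(X)$. Linearity, monotone convergence, and simple-function approximation then give
\[
\int_X\varphi\,dm=\int_Y\Bigl(\int_X\varphi(x)\,m_y(dx)\Bigr)\,\nu(dy)
\]
for every nonnegative Borel $\varphi$, and by splitting into positive and negative parts for every integrable Borel $\varphi$.

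Finally I would establish concentration on the fibers. Applying the identity to $\varphi=\chi_{f^{-1}(B\cap B')}$ gives $\int_{B'}m_y(f^{-1}(B))\,\nu(dy)=\nu(B\cap B')=\int_{B'}\chi_B(y)\,\nu(dy)$ for all $B'\in\calB(Y)$, whence $m_y(f^{-1}(B))=\chi_B(y)$ for $\nu$-a.e.\ $y$. Letting $B$ range over a countable generating algebra of $\calB(Y)$ and removing the corresponding countable union of null sets, I obtain a single $\nu$-null set off which $f_*m_y$ agrees with $\delta_y$ on a generating algebra; since two Borel probability measures on a metric space agreeing on a generating algebra coincide, $f_*m_y=\delta_y$, that is $m_y(f^{-1}(y))=1$ for $\nu$-a.e.\ $y$, using that singletons are Borel. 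Everything outside the countable-additivity step is routine Radon--Nikodym and monotone-class bookkeeping, so I expect that step to be the sole genuine obstacle.
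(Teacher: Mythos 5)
The paper itself does not prove this statement: it is quoted verbatim from \cite[Theorem~16.10.1]{Garlingbook} and used as a black box in Lemma~\ref{lem:disapprox}, so there is no internal proof to compare against. What you have written is the standard construction of regular conditional probabilities, and as a proof sketch it is essentially sound: countable generating algebra, Radon--Nikodym densities $g_A$ for $B\mapsto m(A\cap f^{-1}(B))\ll\nu:=f_*m$, removal of a single $\nu$-null set to get simultaneous finite additivity, upgrade to countable additivity via tightness on Polish spaces, Carath\'eodory extension, and $\pi$-$\lambda$/monotone-class bookkeeping for measurability, the integral identity, and fiber concentration. You are also right that the hypotheses ``$\sigma$-finite probability measure'' are redundant and that $\nu$ is simply a Borel probability measure.

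Two caveats. First, the countable-additivity step, which you correctly flag as the crux, is only named, not carried out; the actual work is to enlarge the countable family $\calA$ by countably many compact inner approximants $K\subset A$ (Ulam tightness), run Radon--Nikodym for those as well, and verify off a further null set that $g_A(y)=\sup_K g_K(y)$, so that the compact-class criterion (a finitely additive set function on an algebra that is inner regular with respect to a compact class is a premeasure) applies. Without this your argument is a correct outline rather than a proof. Second, in the concentration step, applying the displayed identity to $\varphi=\chi_{f^{-1}(B\cap B')}$ yields $\int_Y m_y(f^{-1}(B\cap B'))\,\nu(dy)=\nu(B\cap B')$, not the equality $\int_{B'}m_y(f^{-1}(B))\,\nu(dy)=\nu(B\cap B')$ that you assert. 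The fix is to use the two-variable identity $m(A\cap f^{-1}(B'))=\int_{B'}m_y(A)\,\nu(dy)$, which your construction does provide (it holds for $A\in\calA$ by definition of $g_A$ and extends to all Borel $A$ by the same Dynkin-system argument), and then set $A=f^{-1}(B)$. With that correction the rest of the concentration argument, including the reduction to $f_*m_y=\delta_y$ on a generating algebra, is fine.
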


The above family of measures $\{m_y\}_{y\in Y}$ is called a {\it disintegration for} $f:X\to Y$. 

\bibliography{refdelbeppufujimitsu}

@incollection {Stbary,
    AUTHOR = {Sturm, Karl-Theodor},
     TITLE = {Probability measures on metric spaces of nonpositive
              curvature},
 BOOKTITLE = {Heat kernels and analysis on manifolds, graphs, and metric
              spaces ({P}aris, 2002)},
    SERIES = {Contemp. Math.},
    VOLUME = {338},
     PAGES = {357--390},
 PUBLISHER = {Amer. Math. Soc., Providence, RI},
      YEAR = {2003},
%   MRCLASS = {60B05 (28C15 28C99 53C21)},
%  MRNUMBER = {2039961},
%MRREVIEWER = {Vladimir I. Bogachev},
%       DOI = {10.1090/conm/338/06080},
%       URL = {https://doi.org/10.1090/conm/338/06080},
}

@article {Delzant,
    AUTHOR = {Delzant, T. },
     TITLE = {Hamiltoniens p\'eriodiques et images convexes de l'application
              moment},
   JOURNAL = {Bull. Soc. Math. France},
  FJOURNAL = {Bulletin de la Soci\'et\'e Math\'ematique de France},
    VOLUME = {116},
      YEAR = {1988},
    NUMBER = {3},
     PAGES = {315--339},
%      ISSN = {0037-9484},
%     CODEN = {BSMFAA},
%   MRCLASS = {58F05},
%  MRNUMBER = {984900 (90b:58069)},
%MRREVIEWER = {J. J. Duistermaat},
%       URL = {http://www.numdam.org/item?id=BSMF_1988__116_3_315_0},
}

@unpublished{FujitaOhashi,
  AUTHOR= {Fujita, H. and Ohashi, K.},
  TITLE = {A metric on the moduli space of bodies}, 
  NOTE = {ar{X}iv:1804.05161},
}

@article {PPRS,
    AUTHOR = {Pelayo, \'A. and Pires, A. R. and Ratiu, T. S. and Sabatini, S.},
     TITLE = {Moduli spaces of toric manifolds},
   JOURNAL = {Geom. Dedicata},
  FJOURNAL = {Geometriae Dedicata},
    VOLUME = {169},
      YEAR = {2014},
     PAGES = {323--341},
%      ISSN = {0046-5755},
%   MRCLASS = {58D27 (53D05 53D20)},
%  MRNUMBER = {3175252},
%MRREVIEWER = {Mark Alan Branson},
%       DOI = {10.1007/s10711-013-9858-x},
%       URL = {https://doi.org/10.1007/s10711-013-9858-x},
}

@article {ShephardWebster,
    AUTHOR = {Shephard, G. C. and Webster, R. J.},
     TITLE = {Metrics for sets of convex bodies},
   JOURNAL = {Mathematika},
  FJOURNAL = {Mathematika. A Journal of Pure and Applied Mathematics},
    VOLUME = {12},
      YEAR = {1965},
     PAGES = {73--88},
%      ISSN = {0025-5793},
%   MRCLASS = {52.50 (52.30)},
%  MRNUMBER = {0185509},
%MRREVIEWER = {G. Freilich},
%       DOI = {10.1112/S0025579300005179},
%       URL = {https://doi.org/10.1112/S0025579300005179},
}

@article {KarshonKessler,
    AUTHOR = {Karshon, Y. and Kessler, L. and Pinsonnault, M.},
     TITLE = {A compact symplectic four-manifold admits only finitely many
              inequivalent toric actions},
   JOURNAL = {J. Symplectic Geom.},
  FJOURNAL = {The Journal of Symplectic Geometry},
    VOLUME = {5},
      YEAR = {2007},
    NUMBER = {2},
     PAGES = {139--166},
%      ISSN = {1527-5256},
%   MRCLASS = {53D35 (53D20)},
%  MRNUMBER = {2377250},
%MRREVIEWER = {John B. Etnyre},
%       URL = {http://projecteuclid.org/euclid.jsg/1202004454},
}

@incollection {Aberu,
    AUTHOR = {Abreu, Miguel},
     TITLE = {K\"{a}hler geometry of toric manifolds in symplectic coordinates},
 BOOKTITLE = {Symplectic and contact topology: interactions and perspectives
              ({T}oronto, {ON}/{M}ontreal, {QC}, 2001)},
    SERIES = {Fields Inst. Commun.},
    VOLUME = {35},
     PAGES = {1--24},
 PUBLISHER = {Amer. Math. Soc., Providence, RI},
      YEAR = {2003},
%   MRCLASS = {53D20 (14M25 53C55)},
%  MRNUMBER = {1969265},
%MRREVIEWER = {Martin Pinsonnault},
}

@article {Guillemin,
    AUTHOR = {Guillemin, Victor},
     TITLE = {Kaehler structures on toric varieties},
   JOURNAL = {J. Differential Geom.},
  FJOURNAL = {Journal of Differential Geometry},
    VOLUME = {40},
      YEAR = {1994},
    NUMBER = {2},
     PAGES = {285--309},
%      ISSN = {00% !BIB program = 22-040X},
%   MRCLASS = {32J27 (14M25 32C17 32M05 58F05)},
%  MRNUMBER = {1293656},
%MRREVIEWER = {Ivailo M. Mladenov},
%       URL = {http://projecteuclid.org/euclid.jdg/1214455538},
}

@book {AmbrosioGigliSavare, 
    AUTHOR = {Ambrosio, Luigi and Gigli, Nicola and Savar\'{e}, Giuseppe},
     TITLE = {Gradient flows in metric spaces and in the space of
              probability measures},
    SERIES = {Lectures in Mathematics ETH Z\"{u}rich},
   EDITION = {Second},
 PUBLISHER = {Birkh\"{a}user Verlag, Basel},
      YEAR = {2008},
     PAGES = {x+334},
      ISBN = {978-3-7643-8721-1},
   MRCLASS = {49-02 (28A33 35K55 35K90 49Q20 60B05)},
  MRNUMBER = {2401600},
MRREVIEWER = {Pietro Celada},
}

@book {BuchstaberPanov,
    AUTHOR = {Buchstaber, Victor M. and Panov, Taras E.},
     TITLE = {Toric topology},
    SERIES = {Mathematical Surveys and Monographs},
    VOLUME = {204},
 PUBLISHER = {American Mathematical Society, Providence, RI},
      YEAR = {2015},
%     PAGES = {xiv+518},
%      ISBN = {978-1-4704-2214-1},
%   MRCLASS = {57Sxx (13F55 14M25 52B20 55N22 55P62 57Qxx)},
%  MRNUMBER = {3363157},
%MRREVIEWER = {Shintar\^{o} Kuroki},
%       DOI = {10.1090/surv/204},
%       URL = {https://doi.org/10.1090/surv/204},
}

@book {Villanibook,
    AUTHOR = {Villani, C\'{e}dric},
     TITLE = {Optimal transport},
    SERIES = {Grundlehren der Mathematischen Wissenschaften [Fundamental
              Principles of Mathematical Sciences]},
    VOLUME = {338},
      NOTE = {Old and new},
 PUBLISHER = {Springer-Verlag, Berlin},
      YEAR = {2009},
     PAGES = {xxii+973},
%      ISBN = {978-3-540-71049-3},
%  MRCLASS = {49-02 (28A75 37J50 49Q20 53C23 58E30)},
%  MRNUMBER = {2459454},
%MRREVIEWER = {Dario Cordero-Erausquin},
%      DOI = {10.1007/978-3-540-71050-9},
%       URL = {https://doi.org/10.1007/978-3-540-71050-9},
}

@book {Garlingbook,
    AUTHOR = {Garling, D. J. H.},
     TITLE = {Analysis on {P}olish spaces and an introduction to optimal
              transportation},
    SERIES = {London Mathematical Society Student Texts},
    VOLUME = {89},
 PUBLISHER = {Cambridge University Press, Cambridge},
      YEAR = {2018},
     PAGES = {ix+348},
%      ISBN = {978-1-108-43176-7; 978-1-108-42157-7},
%   MRCLASS = {46-02 (46E30 49Q20)},
%  MRNUMBER = {3752187},
%MRREVIEWER = {Luca Granieri},
}

@article {HaradaKaveh,
    AUTHOR = {Harada, M. and Kaveh, K.},
     TITLE = {Integrable systems, toric degenerations and {O}kounkov bodies},
   JOURNAL = {Invent. Math.},
  FJOURNAL = {Inventiones Mathematicae},
    VOLUME = {202},
      YEAR = {2015},
    NUMBER = {3},
     PAGES = {927--985},
%      ISSN = {0020-9910},
%   MRCLASS = {70G55 (14L24 14M25 37J35 70H06)},
%  MRNUMBER = {3425384},
%MRREVIEWER = {Stanis\l aw Pawe\l  Kasperczuk},
%       DOI = {10.1007/s00222-014-0574-4},
%       URL = {https://doi.org/10.1007/s00222-014-0574-4},
}

@article {NishinouNoharaUeda,
    AUTHOR = {Nishinou, T. and Nohara, Y. and Ueda, K.},
     TITLE = {Toric degenerations of {G}el'fand-{C}etlin systems and
              potential functions},
   JOURNAL = {Adv. Math.},
  FJOURNAL = {Advances in Mathematics},
    VOLUME = {224},
      YEAR = {2010},
    NUMBER = {2},
     PAGES = {648--706},
%      ISSN = {0001-8708},
%   MRCLASS = {53D40 (14M25 53D37)},
%  MRNUMBER = {2609019},
%MRREVIEWER = {Hsian-Hua Tseng},
%       DOI = {10.1016/j.aim.2009.12.012},
%       URL = {https://doi.org/10.1016/j.aim.2009.12.012},
}

@article {IriyehOno,
    AUTHOR = {Iriyeh, Hiroshi and Ono, Hajime},
     TITLE = {Almost all {L}agrangian torus orbits in {$\Bbb{C}P^n$} are not
              {H}amiltonian volume minimizing},
   JOURNAL = {Ann. Global Anal. Geom.},
  FJOURNAL = {Annals of Global Analysis and Geometry},
    VOLUME = {50},
      YEAR = {2016},
    NUMBER = {1},
     PAGES = {85--96},
%      ISSN = {0232-704X},
%   MRCLASS = {53D12 (53D10)},
%  MRNUMBER = {3521559},
%MRREVIEWER = {Miroslava Anti\'{c}},
%      DOI = {10.1007/s10455-016-9504-6},
%       URL = {https://doi.org/10.1007/s10455-016-9504-6},
}

@article {Fukaya,
    AUTHOR = {Fukaya, Kenji},
     TITLE = {Theory of convergence for {R}iemannian orbifolds},
   JOURNAL = {Japan. J. Math. (N.S.)},
  FJOURNAL = {Japanese Journal of Mathematics. New Series},
    VOLUME = {12},
      YEAR = {1986},
    NUMBER = {1},
     PAGES = {121--160},
%      ISSN = {0289-2316},
%  MRCLASS = {53C20 (22E40)},
%  MRNUMBER = {914311},
%MRREVIEWER = {Karsten Grove},
%       DOI = {10.4099/math1924.12.121},
%       URL = {https://doi.org/10.4099/math1924.12.121},
}

@article {Dinghas,
    AUTHOR = {Dinghas, Alexander},
     TITLE = {\"{U}ber das {V}erhalten der {E}ntfernung zweier {P}unktmengen bei
              gleichzeitiger {S}ymmetrisierung derselben},
   JOURNAL = {Arch. Math. (Basel)},
  FJOURNAL = {Archiv der Mathematik},
    VOLUME = {8},
      YEAR = {1957},
     PAGES = {46--51},
%      ISSN = {0003-889X},
%   MRCLASS = {52.0X},
%  MRNUMBER = {92169},
%MRREVIEWER = {W. Fenchel},
%       DOI = {10.1007/BF01898438},
%       URL = {https://doi.org/10.1007/BF01898438},
}


\end{document}